\renewcommand{\thefootnote}{} 
\theoremstyle{plain} 
\newtheorem{theorem}{\indent\sc Theorem}[section]
\newtheorem{lemma}[theorem]{\indent\sc Lemma}
\newtheorem{corollary}[theorem]{\indent\sc Corollary}
\newtheorem{proposition}[theorem]{\indent\sc Proposition}
\newtheorem{claim}[theorem]{\indent\sc Claim}
\theoremstyle{definition} 
\newtheorem{definition}[theorem]{\indent\sc Definition}
\newtheorem{remark}[theorem]{\indent\sc Remark}
\newtheorem{example}[theorem]{\indent\sc Example}
\newcommand{\C}{\mathbb{C}}
\newcommand{\R}{\mathbb{R}}
\newcommand{\Z}{\mathbb{Z}}
\newcommand{\N}{\mathbb{N}}
\newcommand{\K}{\mathfrak{K}}
\newcommand{\Pen}{\mathrm{Pen}}
\def\Im{\mathop{\mathrm{Im}}\nolimits}
\newcommand{\abs}[1]{\lvert#1\rvert}
\newcommand{\norm}[1]{\lVert#1\rVert}
\newcommand{\limone}{\varprojlim{\!}^1}
\newcommand{\U}{\mathcal{U}}
\newcommand{\V}{\mathcal{V}}
\newcommand{\Horo}{\mathcal{H}}
\newcommand{\EG}{\underline{E}G}
\newcommand{\EP}{\underline{E}P}
\newcommand{\Xaug}{X(G,\mathbb{P},\mathcal{S})}
\newcommand{\barXaug}{\overline{X}(G,\mathbb{P},\mathcal{S})}
\newcommand{\EX}{EX(G,\mathbb{P})}
\newcommand{\famP}{\mathbb{P}}
\newcommand{\grad}{\mathbf{d}}
\newcommand{\rsHig}{\mathfrak{c}^r}
\newcommand{\barHig}{\bar{\mathfrak{c}}^r}
\newcommand{\dX}{\partial X}
\newcommand{\mX}{Y}
\def\address#1#2{\begingroup
\noindent\parbox[t]{7.8cm}{%
\small{\scshape\ignorespaces#1}\par\vskip1ex
\noindent\small{\itshape E-mail address}%
\/: #2\par\vskip4ex}\hfill%
\endgroup}%
\title{\uppercase{Coronae of relatively hyperbolic groups and coarse
cohomologies}}
\author{
%
%
\textsc{Tomohiro Fukaya, Shin-ichi Oguni} 
}
\date{} 
\begin{document}

\maketitle

\footnote{ 
2010 \textit{Mathematics Subject Classification}.
Primary 58J22; Secondary 20F67, 20F65.
}
\footnote{ 
\textit{Key words and phrases}. 
coarse cohomology, coarse assembly map, coarse co-assembly map,
relatively hyperbolic group, corona
}
\footnote{ 
T.Fukaya and S.Oguni were supported by Grant-in-Aid for Young Scientists (B)
(23740049) and (24740045), respectively, 
from Japan Society of Promotion of Science
}
\renewcommand{\thefootnote}{\fnsymbol{footnote}} 


\begin{abstract}
We construct a corona of a relatively hyperbolic group by blowing-up all
 parabolic points of its Bowditch boundary. We relate the $K$-homology
 of the corona with the $K$-theory of the Roe algebra, via the coarse
 assembly map. We also establish a dual theory, that is, we relate the
 $K$-theory of the corona with the $K$-theory of the reduced stable
 Higson corona via the coarse co-assembly map.  For that purpose, we
 formulate generalized coarse cohomology theories.  As an application,
 we give an explicit computation of the $K$-theory of the Roe-algebra
 and that of the reduced stable Higson corona of the fundamental groups of
 closed 3-dimensional manifolds and of pinched negatively curved complete Riemannian 
 manifolds with finite volume.
\end{abstract}

\section{Introduction} 





\subsection{The coarse assembly map and its dual}
The coarse category is a category whose objects are proper metric spaces
and whose morphisms are close classes of coarse maps.  Let $X$ be a
proper metric space. There are two covariant functors $X \mapsto
KX_*(X)$ and $X \mapsto K_*(C^*(X))$ from the coarse category to the
category of $\Z_2$-graded Abelian groups. Here the $\Z_2$-graded Abelian
group $KX_*(X)$ is called the {\itshape coarse $K$-homology} of $X$, and the
$C^*$-algebra $C^*(X)$ is called the {\itshape Roe algebra} of $X$.  
Roe \cite{MR1147350} constructed the following {\itshape coarse assembly map}
\begin{align*}
 \mu_*&\colon KX_*(X) \to K_*(C^*(X)),
\end{align*}
which is a natural transformation from the coarse
$K$-homology to the $K$-theory of the Roe algebra. 
For detail, see also \cite{MR1388312}, \cite{MR1344138} and
\cite{MR1817560}.


On the other hand, there are two contravariant functors
$X\mapsto KX^*(X)$ and $X\mapsto K_*(\rsHig(X))$. Here the $\Z_2$-graded
Abelian group $KX^*(X)$ is called the {\itshape coarse $K$-theory} of $X$ and 
the $C^*$-algebra $\rsHig(X)$ is called the {\itshape reduced stable
Higson corona} of $X$.
Emerson and Meyer \cite{MR2225040} constructed a dual of the coarse
assembly map, which is called the {\itshape coarse co-assembly map},
\begin{align*}
 \mu^*&\colon K_{*+1}(\rsHig(X)) \to  KX^*(X).
\end{align*}
In fact, $\mu^*$ is a natural transformation from the $K$-theory
of the reduced stable Higson corona to the coarse $K$-theory with 
the grading shifted by one. Those assembly maps are closely related to the
analytic Novikov conjecture. See \cite[Section 12.6]{MR1817560} and
\cite{EM-descent-Principle} for details.


In this paper, we study the case of
relatively hyperbolic groups with word metrics.
\begin{theorem}
\label{th:coarse_co-assembly-map} 
Let $G$ be a finitely generated group which is
 hyperbolic relative to a finite family of infinite subgroups
 $\famP=\{P_1,\dots,P_k\}$.
 Suppose that each subgroup $P_i$ admits a finite $P_i$-simplicial
 complex which is a universal space for proper actions. Then
\begin{enumerate}[\indent$($a$)$]
 \item \label{item:main-th-hm}
if for all $i = 1,\dots, k$, the coarse assembly maps 
$\mu_* \colon KX_*(P_i) \to K_*(C^*(P_i))$ are isomorphisms,
then so is the coarse assembly map 
$\mu_* \colon KX_*(G) \to K_*(C^*(G))$,
 \item \label{item:main-th-coh}
if for all $i = 1,\dots, k$, the coarse co-assembly maps 
$\mu^*\colon K_{*+1}(\rsHig(P_i)) \to KX^*(P_i)$ 
are isomorphisms, then so is the coarse co-assembly map
$\mu^*\colon K_{*+1}(\rsHig(G)) \to KX^*(G)$.
\end{enumerate}
\end{theorem}
The authors proved the statement $($a$)$ in
\cite{relhypgrp}. In this paper, we prove the statement $($b$)$.

\subsection{Coarse compactification}
Let $X$ be a non-compact proper metric
space. The {\itshape Higson compactification} 
$hX$ of $X$ is the maximal ideal space of the $C^*$-algebra of
$\C$-valued, continuous, bounded functions on $X$ of vanishing
variation. (See Definition~\ref{def:Higson_function}.) 
The {\itshape Higson corona}
of $X$ is $\nu X = hX \setminus X$. 
A corona of $X$ is a pair $(W,\zeta)$ of a compact
metrizable space $W$ and a continuous map $\zeta \colon \nu X \to W$.
When $\zeta$ is surjective, we obtain a compactification $X\cup W$.
(See Section~\ref{sec:higs-comp}.) 


Let $(W,\zeta)$ be a corona of $X$. Then there are
certain transgression maps
\begin{align}
\label{Tr:1} &{T_W} \colon KX_*(X) \to \tilde{K}_{*-1}(W);\\
\label{Tr:2} &T_W \colon \tilde{K}^{*-1}(W) \to KX^*(X);\\ 
\label{Tr:3} &T_W \colon \tilde{H}^{*-1}(W) \to HX^*(X).
\end{align}
Here $\tilde{H}^*(W)$ is the reduced cohomology of $W$ and $HX^*(X)$
is the coarse cohomology of $X$. (See \cite{MR1147350}.) 
In Section~\ref{sec:coarse-homol-theor}, we give a construction of
the map (\ref{Tr:1}) which appeared in \cite[Appendix]{MR1388312}.
The map (\ref{Tr:2}) is constructed in Section~\ref{sec:coarse-k-thoery}.
The map (\ref{Tr:3}) is constructed in \cite[Section 5.3]{MR1147350}.

There exists a homomorphism $b\colon K_*(C^*(X))\to \tilde{K}_{*-1}(W)$
such that $T_W = b\circ \mu_*$.
Therefore if the transgression
map (\ref{Tr:1}) is injective, then so is the coarse assembly map for $X$.
It is also known that if (\ref{Tr:2}) or
(\ref{Tr:3}) is surjective then the coarse assembly map is
rationally injective. For details, see \cite[Appendix]{MR1388312},
\cite[(6.32)]{MR1147350} and \cite[Section 6]{MR2225040}. 
The statement that the transgression
map (\ref{Tr:3}) is surjective for some corona $W$ is 
a version of the Weinberger conjecture.
In this paper, we consider transgression maps for relatively hyperbolic groups.



Let $G$ be a finitely generated group and $\mathcal{S}$ be a finite
 generating set. We suppose that $G$ is hyperbolic relative to a finite
 family of infinite  subgroups $\famP=\{P_1,\dots,P_k\}$. 
Groves and Manning~\cite{MR2448064} defined
 the augmented space
$\Xaug$ with a properly discontinuous action of $G$ by isometries. They showed
 that $\Xaug$ is hyperbolic in the sense of Gromov.
We denote by $\partial \Xaug$ the Gromov
 boundary of $\Xaug$, which is called the Bowditch boundary of
 $(G,\famP)$. (See \cite[Definition 1.4]{Dahmani-2003}.) 
Let $(W_i,\zeta_i)$ be a corona of $P_i$. 
We blow up all parabolic points of $\partial \Xaug$
by using $W_1,\dots, W_k$ and obtain a corona 
$\partial X_\infty$ of $G$. We call $\partial X_\infty$ the
 blown-up corona of 
$(G,\famP,\{W_1,\dots, W_k\})$. See 
 Section~\ref{sec:bound-relat-hyperb} for the details of the construction.
\begin{theorem}
\label{main_theorem} 
Let $G$ be an infinite finitely generated
 group which is hyperbolic relative to a finite family of infinite subgroups
 $\famP=\{P_1,\dots,P_k\}$.
 Suppose that each subgroup $P_i$ admits a finite $P_i$-simplicial
 complex which is a universal space for proper actions. 
 For $i=1,\dots,k$, let $(W_i,\zeta_i)$ be a corona of $P_i$. 
Let $\dX_\infty$ be the blown-up corona of 
$(G,\famP, \{W_1,\dots, W_k\})$. 
\begin{enumerate}[\indent$($a$)$]
 \item 
       If ${T_{W_i}} \colon KX_*(P_i)\to \tilde{K}_{*-1}(W_i)$ is 
       an isomorphism
       for all $i= 1,\dots,k$, then so is
       ${T_{\dX_\infty}}\colon KX_*(G) \to  \tilde{K}_{*-1}(\dX_\infty)$. 
 \item 
       If $T_{W_i} \colon \tilde{K}^{*-1}(W_i)\to KX^*(P_i)$ is
       an isomorphism 
       for all $i= 1,\dots,k$, then so is
       ${T_{\dX_\infty}}\colon \tilde{K}^{*-1}(\dX_\infty) \to KX^*(G)$.
 \item 
       If $T_{W_i}\colon \tilde{H}^{*-1}(W_i)\to HX^*(P_i)$ is
       an isomorphism
       for all $i= 1,\dots,k$, then so is 
       ${T_{\dX_\infty}}\colon \tilde{H}^{*-1}(\dX_\infty) \to  HX^*(G)$.
\end{enumerate}
\end{theorem}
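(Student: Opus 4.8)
The plan is to prove $($a$)$, $($b$)$ and $($c$)$ by a single uniform argument: to realise each transgression map as the middle vertical arrow of a commutative ladder between two long exact sequences whose remaining vertical arrows are already known to be isomorphisms, and then to invoke the five lemma. To this end I would work throughout with an abstract (generalised) coarse (co)homology theory $h$, its associated reduced corona (co)homology $\tilde h$, and the transgression $T_W$ built in Sections~\ref{sec:coarse-homol-theor} and~\ref{sec:coarse-k-thoery}; cases $($a$)$, $($b$)$, $($c$)$ are then the specialisations of $h$ to coarse $K$-homology, coarse $K$-theory, and coarse cohomology. The only structural difference is that $($a$)$ is covariant and uses direct sums while $($b$)$ and $($c$)$ are contravariant and use direct products, the latter forcing one to control a $\limone$-term; I write all formulas homologically, the contravariant reading being obtained by replacing $\bigoplus$ with $\prod$ and reading the degree shifts cohomologically.

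The geometric input is the decomposition underlying the construction of $\dX_\infty$ in Section~\ref{sec:bound-relat-hyperb}. On the coarse side, the augmented space $\Xaug$ of Groves--Manning~\cite{MR2448064} is the Cayley graph of $G$ with one combinatorial horoball attached along each left coset $gP_i$. A combinatorial horoball is Gromov hyperbolic with a one-point Gromov boundary, so (by the hyperbolic base case below) it has vanishing reduced coarse (co)homology; since the peripheral cosets are moreover undistorted in $G$, as is standard, a coarse Mayer--Vietoris/excision argument for $\Xaug$ identifies the coarse (co)homology of the pair $(\Xaug, G)$ with $\bigoplus_{gP_i} h_{*-1}(P_i)$ and gives a long exact sequence relating $h_*(G)$, $h_*(\Xaug)$ and this peripheral term. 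Dually, the Bowditch boundary $\partial\Xaug$ is the union of its parabolic points $\{p_{gP_i}\}$ and its conical limit points, and $\dX_\infty$ is obtained by replacing each $p_{gP_i}$ with a copy of $W_i$ in such a way that collapsing all of these copies recovers $\partial\Xaug$; this gives a cofibre long exact sequence relating $\tilde h_*(\dX_\infty)$, $\tilde h_*(\partial\Xaug)$ and $\bigoplus_{gP_i}\tilde h_*(W_i)$.

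I would then check that the transgression maps are natural with respect both to the horoball attachment and to the blow-up, so that $T_{\dX_\infty}$, $T_{\partial\Xaug}$ and the family $\{T_{W_i}\}$ assemble, with the degree shift by one coming from the transgression, into a commutative ladder between the two long exact sequences above, in which $T_{\dX_\infty}$ occupies the position of $G$. The hypothesis says exactly that the peripheral vertical arrow $\bigoplus_{gP_i}T_{W_i}$ is an isomorphism. The only remaining vertical arrow other than $T_{\dX_\infty}$ is $T_{\partial\Xaug}$ for the \emph{hyperbolic} space $\Xaug$, and here I would use --- either citing the literature or re-deriving it inside the paper's framework --- that a proper geodesic hyperbolic space is coarsely the open cone on its Gromov boundary, so that $T_{\partial\Xaug}$ is an isomorphism in all three theories; this fact also supplies the vanishing of the reduced coarse (co)homology of horoballs used above. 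The finiteness hypothesis on $\EP$, inherited from Theorem~\ref{th:coarse_co-assembly-map}, is used here to keep $\partial\Xaug$, and hence $\dX_\infty$, a finite-dimensional metrizable compactum, so that the corona (co)homology and the transgression are defined. The five lemma then shows that $T_{\dX_\infty}$ is an isomorphism, simultaneously in cases $($a$)$, $($b$)$ and $($c$)$.

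I expect the main difficulty to be twofold and essentially technical. First, the inclusion of the Cayley graph of $G$ into $\Xaug$ is not a coarse equivalence --- the horoballs compress the peripheral cosets logarithmically --- so $G$ is not a metric subspace of $\Xaug$ in the usual sense, and the long exact sequence of the ``pair'' $(\Xaug, G)$ together with its excision isomorphism must be produced from an honest coarse decomposition of $\Xaug$ into the horoballs and their complement, with a careful verification of excision on the horoball pieces. Second, because there are infinitely many cosets and because $\dX_\infty$ must be analysed through its finite blow-ups and through the inverse systems underlying the \v{C}ech-type corona (co)homology, one has to show that the resulting $\limone$-terms vanish --- typically by a Mittag-Leffler argument exploiting finite generation of the (co)homology of the $W_i$ and of $\partial\Xaug$ --- and this is precisely what makes the five-lemma argument valid in the contravariant cases $($b$)$ and $($c$)$. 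Checking the naturality of $T_W$ under the blow-up, by unwinding its construction, is the remaining step that requires genuine care.
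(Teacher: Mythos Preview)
Your overall architecture is right: use that $\Xaug$ is hyperbolic so $T_{\partial\Xaug}$ is an isomorphism, compare Mayer--Vietoris sequences on the space side and the corona side via transgression, and invoke the five lemma. The paper follows exactly this philosophy, but the execution is organised differently, and the difference resolves precisely the two technical difficulties you flag.

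Rather than attempting a single five-lemma ladder for the ``pair'' $(\Xaug,G)$ against the cofibre $\dX_\infty\to\partial\Xaug$, the paper works \emph{inductively through finite stages}. Set $X_n=\Xaug$ with the first $n$ horoballs deleted and let $\dX_n$ be the $n$-th blow-up. Each step uses the $\omega$-excisive decomposition $X_n=X_{n+1}\cup\Horo(g_{n+1}P_{(n+1)})$ on the coarse side and the decomposition $S_n=\dX_{n+1}\cup\mathcal{C}W_{n+1}$ (with $S_n\simeq\dX_n$ by collapsing the cone) on the corona side. Since both the horoball and the cone have vanishing reduced (co)homology, the two Mayer--Vietoris sequences degenerate and the five lemma propagates the isomorphism from $\dX_n/X_n$ to $\dX_{n+1}/X_{n+1}$, the base case $n=0$ being the hyperbolic one. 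This sidesteps your first difficulty entirely: one never needs a long exact sequence for $(\Xaug,G)$, only honest $\omega$-excisive decompositions peeling off one horoball at a time. At the end one takes the direct limit over $n$; the identification $\varinjlim MX^*(X_n)\cong MX^*(G)$ is Corollary~\ref{cor:limX_n=G}, and on the corona side $\tilde M^{*-1}(\dX_\infty)\cong\varinjlim\tilde M^{*-1}(\dX_n)$ by continuity. No $\limone$ control is needed in $($b$)$ and $($c$)$ because both limits are \emph{direct}; a Milnor sequence appears only in the homological case $($a$)$.

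One correction: the finiteness hypothesis on $\underline{E}P_i$ is not used to make $\partial\Xaug$ metrizable (this holds automatically for any proper hyperbolic space). It enters instead through the weak-coarsening machinery of Section~\ref{subsec:weak-coars-relat}: the finite models are needed to build the spaces $EX_n$, to prove $MX^*(X_n)\cong M^*(EX_n)$ (Proposition~\ref{prop:X_n=EX_n}), and hence to establish the limit isomorphism $\varinjlim MX^*(X_n)\cong MX^*(G)$. Without it the passage from the finite blow-ups to $G$ fails.
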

\begin{corollary}
\label{cor:main_cor}
 Let $G$ be an infinite finitely generated
 group which is hyperbolic relative to a finite
 family of infinite subgroups $\famP = \{P_1,\dots,P_k\}$. We suppose
 that $\famP$ satisfies all conditions in
 Theorem~\ref{th:coarse_co-assembly-map} and
 Theorem~\ref{main_theorem}. Then we have $K_*(C^*(G)) \cong
 \tilde{K}_{*-1}(\dX_\infty)$ and 
 $\tilde{K}^{*}(\dX_\infty) \cong K_*(\rsHig(G))$.
\end{corollary}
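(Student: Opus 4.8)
The plan is to obtain the corollary by chaining together the isomorphisms supplied by Theorem~\ref{th:coarse_co-assembly-map} and Theorem~\ref{main_theorem}; no new argument is required. First I would make the standing hypothesis explicit. The assertion that $\famP$ satisfies all the conditions of the two theorems means: each $P_i$ admits a finite $P_i$-simplicial complex which is a universal space for proper actions, and there is a choice of coronae $(W_i,\zeta_i)$ of the $P_i$ such that, for every $i=1,\dots,k$, the coarse assembly maps $\mu_*\colon KX_*(P_i)\to K_*(C^*(P_i))$, the coarse co-assembly maps $\mu^*\colon K_{*+1}(\rsHig(P_i))\to KX^*(P_i)$, and the transgression maps $T_{W_i}\colon KX_*(P_i)\to\tilde{K}_{*-1}(W_i)$ and $T_{W_i}\colon\tilde{K}^{*-1}(W_i)\to KX^*(P_i)$ are all isomorphisms. (The hypothesis of Theorem~\ref{main_theorem}(c) is not needed here.) Fix such coronae $W_i$ and let $\dX_\infty$ be the corresponding blown-up corona of $(G,\famP,\{W_1,\dots,W_k\})$.

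For the first isomorphism I would apply Theorem~\ref{th:coarse_co-assembly-map}(a), which gives that $\mu_*\colon KX_*(G)\to K_*(C^*(G))$ is an isomorphism, together with Theorem~\ref{main_theorem}(a), which gives that $T_{\dX_\infty}\colon KX_*(G)\to\tilde{K}_{*-1}(\dX_\infty)$ is an isomorphism; composing $T_{\dX_\infty}$ with $\mu_*^{-1}$ yields $K_*(C^*(G))\cong\tilde{K}_{*-1}(\dX_\infty)$. (Equivalently, the factorization $T_{\dX_\infty}=b\circ\mu_*$ recalled above forces $b\colon K_*(C^*(G))\to\tilde{K}_{*-1}(\dX_\infty)$ to be an isomorphism.) For the second isomorphism I would apply Theorem~\ref{th:coarse_co-assembly-map}(b), giving that $\mu^*\colon K_{*+1}(\rsHig(G))\to KX^*(G)$ is an isomorphism, and Theorem~\ref{main_theorem}(b), giving that $T_{\dX_\infty}\colon\tilde{K}^{*-1}(\dX_\infty)\to KX^*(G)$ is an isomorphism; composing $T_{\dX_\infty}$ with $(\mu^*)^{-1}$ yields $\tilde{K}^{*-1}(\dX_\infty)\cong K_{*+1}(\rsHig(G))$. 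Reindexing the $\Z_2$-graded groups (replacing $*$ by $*+1$ and using $2$-periodicity of $K$-theory) then gives $\tilde{K}^{*}(\dX_\infty)\cong K_*(\rsHig(G))$, which is the second claim.

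I do not expect any genuine obstacle: all the mathematical content lies in Theorems~\ref{th:coarse_co-assembly-map} and~\ref{main_theorem}, and the corollary merely records the effect of using them together. The only points worth a word of care are making precise what ``all conditions'' comprises --- in particular that a single family $\{W_i\}$ of coronae must simultaneously make the homological transgression maps and the cohomological transgression maps isomorphisms --- and tracking the one-step degree shift in the dual statement; both are routine.
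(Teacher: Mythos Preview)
Your proposal is correct and is exactly the intended argument: the paper states the corollary without proof immediately after Theorem~\ref{main_theorem}, treating it as the evident composition of the isomorphisms furnished by Theorems~\ref{th:coarse_co-assembly-map} and~\ref{main_theorem}. Your unpacking of what ``all conditions'' comprises and your handling of the degree shift are the only details to spell out, and you have done so correctly.
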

As an
 application, we give an explicit computation of the $K$-theory of the
 Roe-algebra and that of the reduced stable Higson corona of
the fundamental groups of
 closed 3-dimensional manifolds and of 
 pinched negatively curved complete Riemannian manifolds with finite volume.
See Section~\ref{sec:application}.

The organization of this paper is as follows. In
Section~\ref{sec:coarse-compactification}, we review the coarse
structure and introduce a pull-back coarse structure which plays
an essential role in the construction of coronae in 
Section~\ref{sec:bound-relat-hyperb}. We also review coronae for proper
coarse spaces. In 
Section~\ref{sec:gener-coarse-cohom}, we formulate generalized
coarse cohomology theories. In Section~\ref{sec:coarse-k-thoery-proof}, we
show that the coarse $K$-theory \cite{MR2225040} satisfies 
axioms introduced in the previous section. In
Section~\ref{sec:coarse-co-assembly}, we review the construction of the
coarse co-assembly map.
In Section~\ref{sec:coarse-cohom-hyperbolic-metric-spaces}, we 
show that the coarse co-assembly maps are isomorphisms in the case of 
proper geodesic spaces which are hyperbolic in the sense of Gromov. In
Section~\ref{sec:relat-hyperb-groups}, we review a definition of
relatively hyperbolic groups due to Groves and Manning \cite{MR2448064}
and give a proof of Theorem~\ref{th:coarse_co-assembly-map}~(\ref{item:main-th-coh}). In
Section~\ref{sec:bound-relat-hyperb}, we construct a corona of a
relatively hyperbolic group using a pull-back coarse structure. In
Section~\ref{sec:transgression-maps}, we give a proof of
Theorem~\ref{main_theorem}. In Section~\ref{sec:application}, 
we give an explicit computation for the fundamental groups of
closed 3-dimensional manifolds and of pinched negatively curved complete
Riemannian manifolds with finite volume.
In Appendix~\ref{appendix:Milnor}, we give a proof of the Milnor
exact sequence for $\sigma$-$C^*$-algebras, which we often use in the
present paper.

\section{Coarse compactification}
\label{sec:coarse-compactification}
\subsection{Coarse structure}
Here we review the coarse structure from
\cite{MR2007488} and introduce the pullback coarse structure.

Let $X$ be a set. For $E\subset X\times X$, put 
$E^{-1}:= \{(y,x): (x,y)\in E\}$ and call it the inverse of $E$. 
For $E',E''\subset X\times X$, put 
$E'\circ E'' := \{(x',x''): \exists x \in X,\, (x',x)\in E',\, (x,x'') \in
E''\}$ and call it the product of $E'$ and $E''$.


\begin{definition}
 A coarse structure on a set $X$ is a collection $\mathcal{E}$ of
 subsets of $X\times X$, called {\itshape controlled sets} for the coarse
 structure, which contains the diagonal and is closed under the
 formation of subsets, inverses, products, and finite union. A set
 equipped with a coarse structure is called a coarse space.
\end{definition}

\begin{example}
 Let $X$ be a metric space. The bounded coarse structure on $X$ is a collection
 of all subsets $E\subset X\times X$ such that 
$\sup\{d(x,x'): (x,x')\in E\}< \infty$.
\end{example}

\begin{example}
Let $G$ be a countable group. There always exists a proper left invariant
 metric $d$ on $G$. The bounded coarse
 structure on $G$ associated to $d$ does not depend on the choice of
 such a metric $d$. 
See \cite[Proposition 1.15, Example 2.13]{MR2007488}. In this paper, we
 always assume that countable
 groups are equipped with this canonical coarse structures. 
\end{example}

\begin{definition}
\label{def:bounded}
 Let $X$ be a coarse space and let $B$ be a subset of $X$. We say that
 $B$ is {\itshape bounded} if $B\times B$ is controlled.
\end{definition}

\begin{definition}
\label{def:close}
 Let $X$ be a coarse space and $S$ be a set. Two maps 
$f,g\colon S\to X$ are {\itshape close} if the set 
$\{(f(s),g(s)):s\in S\}\subset X\times X$ is controlled.
\end{definition}

\begin{definition}
\label{def:coarse-map}
 Let $X$ and $Y$ be coarse spaces, and let $f\colon X\to Y$ be a 
 map. 
\begin{enumerate}
 \item  The map $f$ is {\itshape proper} if the
 inverse image, under $f$, of each bounded subset of $Y$, is also bounded.
 \item The map $f$ is {\itshape bornologous} if for each controlled
       subset $E\subset X\times X$, the set 
$f(E)$ is a controlled subset of $Y\times Y$. Here we abbreviate
       $(f\times f)(E)$ to $f(E)$.
 \item The map $f$ is {\itshape coarse} if it is proper and bornologous.
\end{enumerate} 

The spaces $X$ and $Y$ are {\itshape coarsely equivalent} if there
       exist coarse maps $f\colon X\to Y$ and $g\colon Y\to X$ 
 such that $g\circ f$ and $f\circ g$ are close to
 the identity maps on $X$ and on $Y$, respectively. 
Such a map $f$ is called a coarse equivalence.
\end{definition}

\begin{definition}
 Let $X$ be a locally compact second countable Hausdorff space. We say
 that a coarse structure on $X$ is proper if 
\begin{enumerate}
 \item there is a controlled neighborhood of the diagonal, 
 \item every bounded subset of $X$ is relatively compact, and
 \item $X$ is coarsely connected, that is, for
       any pair of points $(x,x')\in X\times X$, the set $\{(x,x')\}$ is
       controlled. 
\end{enumerate}
\end{definition}

\begin{definition}
Let $X$ be a set and let $Y$ be a coarse space. 
Let $f\colon X\to Y$ be a map. The {\itshape pullback 
 coarse structure} on $X$ is a collection of subsets $E\subset X\times X$
 such that  $f(E)$ is a controlled subset of $Y\times Y$.
\end{definition}

\begin{proposition}
\label{prop:pull_back_coarse_str}
 Let $Y$ be a coarse space. Let $X$ be a set and let $f\colon
 X\to Y$ be a map. We equip $X$ with the pullback coarse
 structure. Then $f$ is a coarse map. 
If there exists a map 
 $g\colon Y\to X$ such
 that the composite $f\circ g$ is close to the identity, 
 then $X$ and $Y$ are coarsely equivalent. 
 If $Y$ is coarsely connected, then so is $X$.

\end{proposition}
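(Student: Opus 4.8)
The plan is to check the three assertions one after another; each reduces to a short diagram-chase with controlled sets, using only the set-theoretic identities $f(E^{-1})=f(E)^{-1}$, $f(E'\cup E'')=f(E')\cup f(E'')$ and the inclusion $f(E'\circ E'')\subseteq f(E')\circ f(E'')$. First I would record that the pullback coarse structure really is a coarse structure: it contains the diagonal since $f(\Delta_X)\subseteq\Delta_Y$, it is closed under subsets by construction, and closure under inverses, products and finite unions follows from the three identities above together with the corresponding closure properties of the coarse structure on $Y$. For the first assertion, bornologousness of $f$ is a tautology: by definition a set $E\subseteq X\times X$ is controlled precisely when $f(E)$ is controlled in $Y$. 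For properness, let $B\subseteq Y$ be bounded, i.e. $B\times B$ controlled; since $f\bigl(f^{-1}(B)\times f^{-1}(B)\bigr)\subseteq B\times B$, the set $f^{-1}(B)\times f^{-1}(B)$ lies in the pullback structure, so $f^{-1}(B)$ is bounded, and $f$ is proper.

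The third assertion is just as quick: for $(x,x')\in X\times X$ we have $f(\{(x,x')\})=\{(f(x),f(x'))\}$, which is controlled as soon as $Y$ is coarsely connected, and hence $\{(x,x')\}$ is controlled in $X$. For the second assertion, write $D:=\{(f(g(y)),y):y\in Y\}$ for the controlled set witnessing that $f\circ g$ is close to $\mathrm{id}_Y$. I would first show that $g$ is coarse. For bornologousness: if $E\subseteq Y\times Y$ is controlled, then $(f\circ g)(E)\subseteq D\circ E\circ D^{-1}$, so $f(g(E))$ is controlled and therefore $g(E)$ is controlled in $X$. For properness: if $B\subseteq X$ is bounded, so that $f(B\times B)$ is controlled in $Y$, then $g^{-1}(B)\times g^{-1}(B)\subseteq D^{-1}\circ f(B\times B)\circ D$, whence $g^{-1}(B)$ is bounded. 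It remains to check the compositions: $f\circ g$ is close to $\mathrm{id}_Y$ by hypothesis, and $g\circ f$ is close to $\mathrm{id}_X$ because $f\bigl(\{(g(f(x)),x):x\in X\}\bigr)=\{(f(g(f(x))),f(x)):x\in X\}\subseteq D$, so $\{(g(f(x)),x):x\in X\}$ is controlled in $X$. Together with the first assertion this exhibits $f$ as a coarse equivalence.

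I do not anticipate a genuine obstacle in this proof; it is entirely formal. The only points demanding a little care are keeping straight which direction of ``close'' (hence which of $D$ or $D^{-1}$) is needed at each step, and using the inclusion $f(E'\circ E'')\subseteq f(E')\circ f(E'')$ rather than an equality, since $f$ need not be injective.
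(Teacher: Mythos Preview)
Your proof is correct and follows essentially the same approach as the paper's: you verify that $f$ is coarse directly from the definition, then use the controlled set $D$ witnessing $f\circ g\sim\mathrm{id}_Y$ to sandwich $f(g(E))$ and $g^{-1}(B)\times g^{-1}(B)$ between products of $D$, $D^{-1}$, and controlled sets---exactly as the paper does with its set $F=D^{-1}$. The only difference is cosmetic: you additionally spell out why the pullback structure is a coarse structure and why $f$ is proper, whereas the paper declares $f$ coarse as ``trivial.''
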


\begin{proof}
 Let $\mathcal{E}_Y$ be a coarse structure of $Y$. The pullback coarse
 structure $\mathcal{E}_X$ is the set  $\mathcal{E}_X = \{E \subset
 X\times X: f(E)\in \mathcal{E}_Y\}$. Then
 it is trivial that $f$ is a coarse map. Suppose that there exists a map 
 $g\colon Y\to X$ such that $f\circ g$ is close to the identity.
Then a subset $F = \{(y,f\circ g(y)):y\in Y\}$ belongs to $\mathcal{E}_Y$.
Let $E\in \mathcal{E}_Y$ be a controlled set. Since
$f(g(E)) \subset F^{-1} \circ E \circ F \in \mathcal{E}_Y$, we have 
$g(E) \in \mathcal{E}_X$ . Let $B\subset X$ be a bounded set. Then 
$g^{-1}(B) \times g^{-1}(B) \subset F \circ f(B\times B) \circ F^{-1}
\in \mathcal{E}_{Y}$, so $g^{-1}(B)$ is bounded. Thus $g$ is a coarse map.
Since $f(\{(x,g\circ f(x)):x\in X\}) \subset F$, we have $g\circ f$ is
 close to the identity. 
If $Y$ is coarsely connected, then for any pair
 of points $(x,x')\in X\times X$, the set 
$\{(f(x),f(x'))\}\subset Y\times Y$ is controlled, thus so is
 $\{(x,x')\}$. Therefore $X$ is coarsely connected.
\end{proof}

\begin{definition}
\label{def:pseudocontinous}
 Let $X$ be a topological space and $Y$ be a metric space. A map 
 $f\colon X\to Y$ is {\itshape pseudocontinuous} if there exists $\epsilon>0$
 such that for any  $x\in X$, the inverse image
 $f^{-1}(B(f(x);\epsilon))$ of the closed
 ball of radius $\epsilon$ centered at $f(x)$ is a neighborhood of $x$.
\end{definition}

\begin{proposition}
\label{prop:pullback-proper} 
Let $Y$ be a proper metric space with
the bounded coarse structure. Let $X$ be a locally compact second
countable Hausdorff space. Let $f\colon X\to Y$ be a pseudocontinuous
map. We equip $X$ with the pullback coarse structure. If for any compact
set $K\subset Y$ the inverse image $f^{-1}(K)\subset X$ is
relatively compact, then $X$ is a proper coarse space.
\end{proposition}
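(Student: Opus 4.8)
The plan is to verify directly the three defining conditions of a proper coarse structure on $X$, since $X$ is already assumed to be locally compact, second countable and Hausdorff. Throughout one should keep in mind that a subset $E\subset X\times X$ is controlled for the pullback coarse structure precisely when $\sup\{d(f(x),f(x')):(x,x')\in E\}<\infty$, because $Y$ carries the bounded coarse structure.

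First I would produce a controlled neighbourhood of the diagonal. Let $\epsilon>0$ be the constant furnished by pseudocontinuity of $f$, so that for every $x\in X$ the set $f^{-1}(B(f(x);\epsilon))$ is a neighbourhood of $x$; write $U_x$ for its interior. Then $V:=\bigcup_{x\in X}U_x\times U_x$ is an open subset of $X\times X$ containing the diagonal (as $(x,x)\in U_x\times U_x$). If $(a,b)\in V$, then $a,b\in U_x$ for some $x$, hence $d(f(a),f(x))\le\epsilon$ and $d(f(b),f(x))\le\epsilon$, so $d(f(a),f(b))\le 2\epsilon$. Thus $f(V)$ is contained in $\{(y,y'):d(y,y')\le 2\epsilon\}$, which is controlled in $Y\times Y$, and therefore $V$ is controlled; this gives condition (1).

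Next I would check that every bounded subset of $X$ is relatively compact. Let $B\subset X$ be bounded; we may assume $B\neq\emptyset$. By definition $B\times B$ is controlled, which means $f(B)$ has finite diameter in $Y$, so $f(B)$ is contained in some closed ball $K$ of $Y$. Since $Y$ is proper, $K$ is compact, and by hypothesis $f^{-1}(K)$ is relatively compact in $X$. As $B\subset f^{-1}(f(B))\subset f^{-1}(K)$, the closure of $B$ is a closed subset of the compact set $\overline{f^{-1}(K)}$, hence compact; this gives condition (2). Finally, $X$ is coarsely connected: for any $(x,x')\in X\times X$ the image $f(\{(x,x')\})=\{(f(x),f(x'))\}$ is a single point, which is trivially controlled in $Y\times Y$, so $\{(x,x')\}$ is controlled; alternatively this follows from Proposition~\ref{prop:pull_back_coarse_str} together with the (obvious) coarse connectedness of $Y$. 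This establishes condition (3), so $X$ is a proper coarse space.

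I expect the only point requiring any care to be the construction of the controlled neighbourhood of the diagonal in the first step: one must feed pseudocontinuity through the definition of the pullback structure and allow the factor-of-two loss ($2\epsilon$ rather than $\epsilon$) that arises from comparing two points lying in the same $\epsilon$-ball. The verifications of (2) and (3) are then routine consequences of properness of $Y$ and of the standing hypothesis on preimages of compact sets.
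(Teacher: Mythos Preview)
Your proof is correct and follows essentially the same approach as the paper's: both verify the three conditions of a proper coarse structure directly, using pseudocontinuity to produce a controlled neighbourhood of the diagonal and the compactness hypothesis on preimages to show bounded sets are relatively compact. Your treatment of the first step is in fact slightly more explicit than the paper's, which simply asserts that $(f\times f)^{-1}(\Delta_\epsilon)$ is a controlled neighbourhood of the diagonal without spelling out the factor-of-two loss you correctly identify.
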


\begin{proof}
 Fix $\epsilon>0$ satisfying the condition in
 Definition~\ref{def:pseudocontinous}. 
Set $\Delta_\epsilon = \{(x,y):d(x,y)\leq \epsilon\} \subset Y\times
 Y$. Then the pullback $f^{-1}(\Delta_\epsilon)$ is a controlled
 neighborhood of the diagonal. Let $B\subset X$ be a bounded subset, then
 $f(B)\times f(B)$ is controlled. Thus $f(B)$ is relatively compact,
 and so is $f^{-1}(f(B))$. Therefore $B$ is relatively compact. Since
 $Y$ is coarsely connected, so is $X$.
\end{proof}
The following is a typical example of the
pullback coarse structure.
\begin{proposition}
\label{prop:nerve-coarse-str}
 Let $X$ be a proper metric space. Let $\U$ be a locally finite cover
 of $X$ such that any element of $\U$ has uniformly bounded diameter.
 Then (a geometric realization of) the nerve complex $\abs{\U}$ has a
 canonical coarse structure which is proper and coarsely equivalent to $X$.
\end{proposition}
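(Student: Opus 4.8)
The plan is to build an explicit coarse map from $X$ to $|\U|$ and an explicit map back, then apply Proposition~\ref{prop:pull_back_coarse_str} and Proposition~\ref{prop:pullback-proper}. First I would produce the coarse structure on $|\U|$ as a pullback: choose a partition of unity $\{\phi_U\}_{U\in\U}$ subordinate to $\U$ (possible since $\U$ is locally finite and $X$ is paracompact), and define $p\colon X\to|\U|$ by $p(x)=\sum_{U}\phi_U(x)\, v_U$, where $v_U$ is the vertex of $|\U|$ corresponding to $U$. This $p$ is continuous; I equip $|\U|$ with the pullback coarse structure along $p$. Conversely, for each vertex $v_U$ pick a point $g(v_U)\in U$ (nonempty) and extend affinely over simplices to get $g\colon|\U|\to X$; on each simplex the image lies within a uniformly bounded neighborhood (here the hypothesis that elements of $\U$ have uniformly bounded diameter, together with local finiteness bounding the number of mutually intersecting members, is used), so $g$ is Lipschitz-like and in particular bornologous for the bounded coarse structure on $X$.

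Next I would check the two compositions. For $x\in X$, $g(p(x))$ is a convex combination of points $g(v_U)\in U$ over those $U$ containing $x$; since all such $U$ have diameter $\le D$ and contain $x$, $g(p(x))$ lies within $D$ of $x$, so $g\circ p$ is close to $\mathrm{id}_X$ in the bounded coarse structure. For the other composition, $p(g(v))$ and $v$ lie in a common simplex of uniformly bounded "combinatorial size" — again by local finiteness — so $p\circ g$ is close to $\mathrm{id}_{|\U|}$; in fact for the conclusion I only need $p\circ g$ close to the identity, which Proposition~\ref{prop:pull_back_coarse_str} then upgrades to a coarse equivalence. That proposition also gives that $|\U|$ is coarsely connected because $X$ (being a proper metric space) is.

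Finally I would verify that the pullback coarse structure on $|\U|$ is \emph{proper} in the sense of the definition, which is where Proposition~\ref{prop:pullback-proper} enters. The map $p\colon X\to Y:=|\U|$ is not quite the right direction, so instead I apply Proposition~\ref{prop:pullback-proper} with the roles arranged so that $|\U|$ is the pullback space: one checks $p$ (or rather a suitable pseudocontinuous map into the metric space $X$, namely $g$, or a Lipschitz representative of it) has the property that preimages of compact sets are relatively compact — this follows because $X$ is proper and $g$ has bounded "fibers" over points up to local finiteness. Then $|\U|$ is locally compact, second countable, Hausdorff (it is a locally finite simplicial complex), carries a controlled neighborhood of the diagonal (the pullback of $\Delta_\epsilon$), has relatively compact bounded sets, and is coarsely connected — i.e., the coarse structure is proper.

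The main obstacle, and the step deserving genuine care rather than a wave of the hand, is the quantitative bookkeeping tying "uniformly bounded diameter of cover elements" plus "local finiteness" to an honest \emph{uniform} bound on the number of members of $\U$ meeting any given member (equivalently, a uniform bound on the dimension and local size of $|\U|$), since without such a bound neither the affine extension $g$ is bornologous nor is $|\U|$ locally compact in a uniformly controlled way. I would isolate this as a preliminary observation — that a locally finite cover of a proper metric space by sets of diameter $\le D$ has a uniform bound $N(D)$ on multiplicity over bounded regions — and then the rest of the argument is the routine assembly sketched above.
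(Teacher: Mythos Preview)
Your overall strategy---pull back the coarse structure from $X$ to $\abs{\U}$ and then invoke Propositions~\ref{prop:pull_back_coarse_str} and~\ref{prop:pullback-proper}---matches the paper's, but the execution has a real gap and some confusion that the paper sidesteps.

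First, the direction of the pullback. A pullback coarse structure lives on the \emph{domain} of a map into a coarse space, so to put one on $\abs{\U}$ you need a map $\abs{\U}\to X$, not your partition-of-unity map $p\colon X\to\abs{\U}$. You notice this later, but by then the argument is tangled. The paper simply starts with $f\colon\abs{\U}\to X$, defined by choosing for each $p\in\abs{\U}$ some $U_p\in\U$ with $p\in\mathrm{st}\,U_p$ and setting $f(p)=x(U_p)$ for a fixed choice $x(U)\in U$. This $f$ is not continuous, only pseudocontinuous, and that is exactly what Proposition~\ref{prop:pullback-proper} asks for.

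Second, your map $g\colon\abs{\U}\to X$ by ``affine extension over simplices'' does not make sense: $X$ is an arbitrary metric space, not a convex subset of a linear space.

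Third, and most seriously, the ``main obstacle'' you isolate---a uniform bound on the number of members of $\U$ meeting any given member---is \emph{not} a consequence of the hypotheses. Take $X=\R$ and let $\U$ consist, for each $n\ge 1$, of $n$ indexed copies of $[n,n+1]$. This is locally finite with uniformly bounded diameters, yet the multiplicity is unbounded. So your proposed ``preliminary observation'' is false, and any argument relying on it fails. The paper's choice of a merely pseudocontinuous $f$ avoids this entirely: if $q\in\mathrm{st}\,U_p$ then $U_q\cap U_p\neq\emptyset$, so $d(f(q),f(p))\le 2D$ where $D$ bounds the diameters---no multiplicity bound needed. Local compactness of $\abs{\U}$ likewise needs only that each vertex lies in finitely many simplices, which follows from local finiteness of $\U$ and properness of $X$, not from any uniform multiplicity.
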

\begin{proof}
Since $X$ is a proper metric space and 
$\U$ is locally finite, $\abs{\U}$ is locally compact second countable
 Hausdorff space.
For each element $U\in \U$, we choose a point $x(U) \in U$. 
For each point $p\in \abs{\mathcal{U}}$, we choose $U_p\in \mathcal{U}$ 
such that $p\in \mathrm{st}{\,U_p}$, 
where $\mathrm{st}{\, U_p}$ denotes the star of $U_p$. 
Then we define a map $f\colon \abs{\U} \to X$ by $f(p) = x(U_p)$. 
Since $\U$ is locally finite, the pullback $f^{-1}(K)$ of any compact set
 $K\subset X$ is relatively compact.
Since each $U\in \U$
 has uniformly bounded diameter, $f$ is pseudocontinuous. 
Let $g\colon X\to \abs{U}$ be a continuous map induced by a
 partition of unity. It is easy to see that $f\circ g$ is
 close to the identity. Thus the assertion
 follows from Proposition~\ref{prop:pull_back_coarse_str} and
 \ref{prop:pullback-proper}.
\end{proof}

\subsection{Higson compactification}
\label{sec:higs-comp}
Here we recall the definitions of the Higson compactification and coarse
compactifications. For details, see \cite{MR2007488} and
\cite{MR1147350}.
\begin{definition}
\label{def:Higson_function}
 Let $X$ be a proper coarse space and let $V$ be a normed
 space. Let  $f\colon X\to V$ be a
 bounded continuous function. We denote by $\grad f$ the function
\[
  \grad f(x,y) = f(y) - f(x) \colon X\times X \to V.
\]
We say that $f$ is a Higson function, or, of vanishing variation, if for
each controlled set $E$, the restriction of $\grad f$ to $E$ vanishes at
infinity, that is, for any $\epsilon>0$, there exists a bounded subset
$B$ such that for any $(x,y)\in E \setminus B\times B$, we have
$\norm{\grad f(x,y)}<\epsilon$.
\end{definition}

The bounded continuous $\C$-valued Higson functions on a proper coarse
space $X$ form a unital
$C^*$-subalgebra of bounded continuous functions on $X$, which we
denote $C_h(X)$. By the Gelfand-Naimark theory, $C_h(X)$ is isomorphic
to a $C^*$-algebra of continuous functions on a compact Hausdorff space.

\begin{definition}
\label{def:Higson-compactification}
 The compactification $hX$ of $X$ characterized by the property 
$C(hX) = C_h(X)$ is called the Higson compactification. Its boundary 
$hX\setminus X$ is denoted $\nu X$, and is called the Higson corona 
of $X$.
\end{definition}

The assignment $X\mapsto \nu X$ is a functor from the coarse category to
the category of compact Hausdorff spaces. 
For details, see \cite[Section 2.3]{MR2007488} or 
\cite[Section 5.1]{MR1147350}.
\begin{proposition}[Dranishnikov]
\label{prop:Dra-embedding}
Let $X$ and $Y$ be proper metric spaces and let $f\colon X \to Y$
 be a coarse embedding, that is, a coarse 
equivalence to the image. Then the induced map 
$\nu f\colon \nu X\to \nu Y$ is an embedding, thus we can regard $\nu X$ as
 a subspace of $\nu Y$.
\end{proposition}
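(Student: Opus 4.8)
The plan is to show that $\nu f$ is injective. Given this, since $\nu$ is a functor into the category of compact Hausdorff spaces, $\nu f$ is a continuous injection from the compact space $\nu X$ into the Hausdorff space $\nu Y$, hence a homeomorphism onto a closed subspace, which is the assertion. To begin, put $Z:=\overline{f(X)}\subseteq Y$; as a closed subset of a proper metric space, $Z$ is again a proper metric space, and since $f$ is a coarse equivalence onto $f(X)$, which in turn is $1$-dense in $Z$, the map $f$ factors as a coarse equivalence $X\to Z$ followed by the inclusion $\iota\colon Z\hookrightarrow Y$. The functor $\nu$ sends the first factor to a homeomorphism $\nu X\cong\nu Z$, so it suffices to prove that $\nu\iota\colon\nu Z\to\nu Y$ is an embedding. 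As $\iota$ is continuous and proper, restriction of functions is a unital $*$-homomorphism $C_h(Y)\to C_h(Z)$, so $\iota$ extends to a continuous map $h\iota\colon hZ\to hY$ that carries $\nu Z$ into $\nu Y$; it remains to see that $\nu\iota$ is injective.

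Let $\omega_0\neq\omega_1$ in $\nu Z$. Since $hZ$ is compact Hausdorff with $C(hZ)=C_h(Z)$, Urysohn's lemma gives $g\in C_h(Z)$, $0\le g\le 1$, whose extension $\hat g\colon hZ\to[0,1]$ satisfies $\hat g(\omega_0)=0$ and $\hat g(\omega_1)=1$. Set $A_0:=g^{-1}([0,\tfrac13])$ and $A_1:=g^{-1}([\tfrac23,1])$, regarded as subsets of $Z\subseteq Y$. Because $Z$ is dense in $hZ$, the point $\omega_i$ lies in $\overline{A_i}^{hZ}$ (it belongs to the open set $\hat g^{-1}([0,\tfrac13))$, respectively $\hat g^{-1}((\tfrac23,1])$, whose intersection with $Z$ is contained in $A_i$); hence $\nu\iota(\omega_i)=h\iota(\omega_i)$ lies in $\overline{A_i}^{hY}\cap\nu Y$. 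The key point is that $A_0$ and $A_1$ are \emph{coarsely disjoint} in $Y$, meaning that for every $R>0$ the set $A_0\cap N_R(A_1)$ is bounded, where $N_R(A_1)$ is the $R$-neighborhood of $A_1$: if $a_0\in A_0$, $a_1\in A_1$ and $d(a_0,a_1)\le R$, then $\abs{\grad g(a_0,a_1)}\ge\tfrac13$, while the vanishing variation of $g$ on the controlled set $\{(z,z'):d(z,z')\le R\}$ provides a bounded set $B$ with $\abs{\grad g}<\tfrac13$ off $B\times B$; hence $a_0\in B$.

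What remains is the coarse normality of the Higson corona: two coarsely disjoint subsets $A_0,A_1$ of a proper metric space $Y$ have disjoint closures in $hY$. Passing to the closures $C_i:=\overline{A_i}^{Y}$ and discarding from $C_1$ a bounded set — operations that alter neither $\overline{A_i}^{hY}\cap\nu Y$ nor coarse disjointness — we may assume $C_0$ and $C_1$ closed and disjoint. Then $h(y):=d(y,C_0)\big/\!\bigl(d(y,C_0)+d(y,C_1)\bigr)$ defines a continuous function $h\colon Y\to[0,1]$ vanishing on $C_0$ and equal to $1$ on $C_1$. It is a Higson function: the functions $y\mapsto d(y,C_i)$ are $1$-Lipschitz, so $\abs{\grad h(y,y')}\le R\big/\!\bigl(d(y,C_0)+d(y,C_1)\bigr)$ whenever $d(y,y')\le R$, and $d(y,C_0)+d(y,C_1)\to\infty$ as $y\to\infty$ in $Y$ — otherwise points of $C_0$ and of $C_1$ would both stay within a bounded distance of a sequence escaping to infinity, contradicting coarse disjointness. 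Therefore $\hat h\equiv 0$ on $\overline{C_0}^{hY}$ and $\hat h\equiv 1$ on $\overline{C_1}^{hY}$, so these sets, and a fortiori $\overline{A_0}^{hY}\cap\nu Y$ and $\overline{A_1}^{hY}\cap\nu Y$, are disjoint.

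Combining the last two steps, $\nu\iota(\omega_0)$ and $\nu\iota(\omega_1)$ lie in disjoint subsets of $\nu Y$, so $\nu\iota$ is injective, hence an embedding, and therefore so is $\nu f$. Nearly everything here is bookkeeping with the functor $\nu$; the one substantive ingredient is the coarse-normality lemma, and within it the point to watch is the verification that $\grad h$ vanishes at infinity — that is, that $d(y,C_0)+d(y,C_1)\to\infty$ — together with the harmless reductions that make $C_0$ and $C_1$ closed and disjoint.
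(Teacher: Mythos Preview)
Your argument is correct. The paper itself does not supply a proof; it simply cites Dranishnikov's result \cite[Theorem 1.4]{MR1607744} and moves on. What you have written is essentially the standard self-contained proof of that theorem: reduce to the inclusion of a closed subspace, separate two corona points by a Higson function on the subspace, and then invoke the coarse normality of the Higson compactification (coarsely disjoint sets have disjoint traces on the corona) to conclude injectivity. One small point worth making explicit in the reductions of the coarse-normality step: when you ``discard from $C_1$ a bounded set'' to make $C_0$ and $C_1$ disjoint, you should remove an \emph{open} bounded set containing $C_0\cap C_1$ so that the resulting set is still closed (this is needed for $d(\cdot,C_1)$ to vanish exactly on it); since bounded sets are relatively compact in a proper metric space this is harmless, but a reader might stumble there. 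Otherwise the proof is clean, and it has the advantage over the paper's treatment of being self-contained.
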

\begin{proof}
The proposition follows immediately from \cite[Theorem 1.4]{MR1607744}.
\end{proof}

\begin{definition}
Let $X$ be a proper coarse space.
 A corona of $X$ is a pair $(W,\zeta)$ of a compact metrizable space
 $W$ and a continuous map $\zeta \colon \nu X\to W$.
\end{definition}



Let $X$ be a proper coarse space. Let $(W,\zeta)$ be a corona of $X$.
We consider the disjoint union $X \sqcup W$. We equip 
$X \sqcup W$ with the final topology with respect to the map 
$\mathrm{id}\sqcup \zeta \colon hX \to X \sqcup W$,
which we denote by $\bar{\zeta}$. 
Let $X\cup_\zeta W$ denote the space
$X\sqcup W$ with this topology. By the construction, we see that
$X\cup_\zeta W$ is compact.

Next, we construct a compact Hausdorff space using functional analysis. 
The continuous map $\zeta$ induces a homomorphism 
$\zeta^*\colon C(W) \to C(\nu X)$. Then the image $\zeta^*(C(W))$ is a
$C^*$-subalgebra of $C(\nu X)$. 
Let 
\[
 \pi \colon C_h(X) \to C_h(X)/C_0(X) \cong C(\nu X)
\] 
be the quotient map. 
Then the pullback $\pi^{-1}(\zeta^*(C(W)))$ is a $C^*$-subalgebra of
$C_h(X)$. Set 
$A= \{(f,g)\in \pi^{-1}(\zeta^*(C(W)))\oplus C(W)\colon 
\pi(f) =\zeta^*(g)\}$. Then $A$ is a unital commutative $C^*$-algebra
which contain $C_0(X)$ as an ideal. 
By the Gelfand-Naimark theory, there exists a compact Hausdorff space
$Z$ and an embedding $i\colon X\to Z$ such that 
$C(Z) \cong A$. We identify $X$ with $i(X)$. 

\begin{proposition}
\label{prop:coarse_compt-is-Hausdorff}
These two spaces $X\cup_\zeta W$ and $Z$ are homeomorphic. 
Especially, $X\cup_\zeta W$ is a compact metrizable space. 
If $\zeta$ is surjective, $X$ is dense in $X\cup_\zeta W$ and thus we 
call $X\cup_\zeta W$ a coarse compactification of $X$.
We abbreviate 
$X\cup_\zeta W$ to $X\cup W$ for simplicity. 
\end{proposition}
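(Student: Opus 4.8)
The plan is to identify the Gelfand spectrum $Z=\mathrm{Spec}(A)$ explicitly with the set $X\sqcup W$ in which $\nu X$ has been collapsed into $W$ along $\zeta$ — that is, with $X\cup_\zeta W$ — by producing a continuous bijection $\bar\psi\colon X\cup_\zeta W\to Z$ and then upgrading it to a homeomorphism with a soft compactness argument. First I would unwind $A$. Because $\pi\colon C_h(X)\to C(\nu X)$ is surjective, $A$ is precisely the fibre product of $\pi$ and $\zeta^{*}$ sitting inside $C_h(X)\oplus C(W)$; it is unital with unit $(1,1)$, and $C_0(X)\oplus 0=\ker\bigl(\mathrm{pr}_2\colon A\to C(W)\bigr)$ is a closed ideal, so there is a short exact sequence $0\to C_0(X)\to A\xrightarrow{\mathrm{pr}_2} C(W)\to 0$, which I will reuse for metrizability. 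I would also recall that $C_h(X)=C(hX)$, that under this identification $\pi$ is restriction to $\nu X$, and that $C_0(X)$ is the ideal of functions on $hX$ vanishing on $\nu X$.

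Next I would define $\psi\colon hX\to Z$ by $\psi(p)=\mathrm{ev}_p\circ\mathrm{pr}_1$ and $\psi_W\colon W\to Z$ by $\psi_W(w)=\mathrm{ev}_w\circ\mathrm{pr}_2$, where $\mathrm{ev}$ denotes evaluation and $\mathrm{pr}_1,\mathrm{pr}_2$ are the two coordinate projections of $A\subset C_h(X)\oplus C(W)$; both are continuous into $Z$ with its weak-$\ast$ topology, since $p\mapsto f(p)$ and $w\mapsto g(w)$ are continuous for each fixed $(f,g)\in A$. The defining relation $\pi(f)=\zeta^{*}(g)$ for a pair $(f,g)\in A$ reads $f|_{\nu X}=g\circ\zeta$, whence $\psi|_{\nu X}=\psi_W\circ\zeta$. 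Therefore $\psi$ and $\psi_W$ glue to a single map $\bar\psi\colon X\cup_\zeta W\to Z$, which restricts on $X$ to the embedding $i$ and is continuous because $\bar\psi\circ\bar\zeta=\psi$ (and $\bar\psi|_W=\psi_W$) while $X\cup_\zeta W$ carries the final topology.

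The substantive point is that $\bar\psi$ is a bijection. For surjectivity, the image of $\bar\psi$ is compact, since $X\cup_\zeta W$ is, hence closed in the compact Hausdorff space $Z$; if a character $\chi$ of $A$ were not in this image, Urysohn's lemma applied in $Z$ would furnish $(f,g)\in A=C(Z)$ with $(f,g)(\chi)=1$ and $(f,g)$ vanishing on the image, so that $f$ vanishes on $X$ and $g$ vanishes on $W$; but $X$ is dense in $hX$ and $f\in C_h(X)=C(hX)$, so $f=0$, whence $(f,g)=0$, a contradiction. For injectivity, the ideal $C_0(X)\oplus 0$ separates the points of $X$ and separates $X$ from $W$, while $\mathrm{pr}_2(A)=C(W)$ — again because $\pi$ is surjective — separates the points of $W$; since in $X\cup_\zeta W$ the points of $\nu X$ are already identified exactly according to $\zeta$, this yields injectivity.

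Now $\bar\psi$ is a continuous bijection from the compact space $X\cup_\zeta W$ onto the Hausdorff space $Z$, hence a homeomorphism, so $X\cup_\zeta W$ is compact Hausdorff. Metrizability I would deduce from separability of $A$: in the sequence $0\to C_0(X)\to A\to C(W)\to 0$ the algebra $C_0(X)$ is separable because $X$ is locally compact, second countable and Hausdorff, and $C(W)$ is separable because $W$ is compact metrizable, and an extension of separable $C^{*}$-algebras is separable; hence $Z\cong X\cup_\zeta W$ is metrizable. If moreover $\zeta$ is surjective then $\bar\zeta\colon hX\to X\cup_\zeta W$ is surjective, so $X=\bar\zeta(X)$ is dense in $\bar\zeta(hX)=X\cup_\zeta W$ because $X$ is dense in $hX$. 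I expect the only genuinely non-formal part to be the explicit identification of $\mathrm{Spec}(A)$ — concretely, the Urysohn step for surjectivity of $\bar\psi$ and the bookkeeping of which points of $hX\sqcup W$ are identified by $A$; once that is in place, the homeomorphism and the metrizability are routine.
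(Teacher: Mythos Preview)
Your proof is correct and follows essentially the same approach as the paper's: both construct the continuous map $X\cup_\zeta W\to Z$ via the factorization through $hX$, deduce metrizability from separability of the extension $0\to C_0(X)\to A\to C(W)\to 0$, and conclude with the compact-to-Hausdorff argument. The paper is terser---it simply asserts the existence of the ``canonical bijection'' $\varphi$ from the decomposition $Z=X\cup W$ given by the short exact sequence---whereas you verify bijectivity explicitly via Urysohn's lemma and separating functions, which is a legitimate (and more self-contained) way to fill in that step.
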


\begin{proof}
Let $A$ be a $C^*$-algebra defined in the above. The inclusion 
$C_0(X) \hookrightarrow A$ is given by $f\mapsto (f,0)$. 
We also have a surjection $A\to C(W),\, (f,g)\mapsto g$.
We consider the following diagram with two short exact sequences
\begin{align*}
 \xymatrix{
0 \ar[r] &C_0(X) \ar[r] &C(hX) 
 \ar[r] & C(\nu X) \ar[r] & 0\\
0 \ar[r] &C_0(X) \ar[r] \ar@{=}[u] &C(Z) 
 \ar[r] \ar[u] & C(W) \ar[r] \ar[u] & 0.
}
\end{align*}
Since $C(W)$ and $C_0(X)$ are separable, so is $C(Z)$. Thus $Z$ is
 metrizable. The surjection $C(Z) \to C(W)$ induces an
 embedding $W\to Z$, so we identify $W$ with its image in
 $Z$. Thus $Z$ can be decomposed as $Z = X \cup W$. 
 Let $\varphi \colon X\cup_\zeta W \to Z$ be the canonical
 bijection. Then we have a commutative diagram
\begin{align*}
 \xymatrix{
hX \ar[d]_{\zeta} \ar[dr] &\\
X\cup_\zeta W \ar[r]^\varphi & Z.
}
\end{align*}
Since the map $hX \to Z$ is continuous, so is
$\varphi$. Therefore $\varphi$ is homeomorphism.
\end{proof}
The following notion is useful in the study of
proper metric spaces and their coronae from the view point of the
algebraic topology.
\begin{definition}
\label{def:cover}
Let $X$ and $Y$ be proper metric spaces and let $(W,\zeta)$ and
 $(Z,\xi)$ be respectively coronae of $X$ and $Y$.
 Let $f\colon X\to Y$ be a coarse map and let $\eta \colon W\to Z$ be a
 continuous map. We say that $f$ {\itshape covers} $\eta$ if there exists a
 discrete subset $X'\subset X$ such that
 the inclusion is a coarse
 equivalence and the restriction $f|_{X'}$ extends to a continuous map 
 $f\cup \eta\colon X'\cup W\rightarrow Y\cup Z$.
\end{definition}

\begin{remark}
 In the above setting, $f$ covers $\eta$ if and only if the following
 diagram is commutative
\begin{align*}
 \xymatrix{
\nu X \ar[r]^{\nu f}  \ar[d]_\zeta &\nu Y \ar[d]_\xi\\
 W \ar[r]^{\eta}  & Z.
}
\end{align*}
\end{remark}


In the rest of the paper, whenever we consider a corona $(W,\zeta)$
of a proper metric space $X$, we assume that $X$ is non-compact. In
particular, neither $\nu X$ nor $W$ is empty.

\section{Generalized coarse cohomology theory}
\label{sec:gener-coarse-cohom}
\subsection{Axiom}
\label{sec:axiom}
The coarse category is a category whose objects are proper metric spaces
and whose morphisms are close classes of coarse maps. The coarse
cohomology \cite{MR1147350}, the coarse $K$-theory \cite{MR2225040} and
the $K$-theory of the reduced stable Higson corona \cite{MR2225040} can be
regarded as cohomology theories on the coarse category.  In this
section, we introduce a generalized coarse cohomology theory.

The following notion was introduced in \cite{MR1219916} to state the
Mayer-Vietoris principle for the coarse cohomology and the $K$-theory of
the Roe algebra. Let $X$ be a metric space and $A\subset X$ be a
subspace. For $R>0$, we denote by $\Pen(A;R)$ the $R$-neighborhood 
$\{x \in X: d(x,A)\leq R\}$ of $A$.
\begin{definition}
 Let $X$ be a proper metric space, and let $A$ and $B$ be closed
 subspaces with $X = A\cup B$. We say that 
 $X= A\cup B$ is an {\itshape $\omega$-excisive decomposition}, 
 if for each $R> 0$ there exists some $S> 0$ such that 
\[
 \Pen(A;R)\cap \Pen(B;R) \subset \Pen(A\cap B; S).
\]
\end{definition}

Higson-Roe~\cite{MR1388312} introduced a
 notion of coarse homotopy. After that, they gave an
 alternative definition of coarse homotopy, which is a variant of Lipschitz
homotopy. (For Lipschitz homotopy, see \cite[1.$C_3$]{MR919829},
\cite[Definition 4.1]{MR1344138} and \cite[Definition 11.1]{MR1451755}.)
Our definition is based on \cite[Section 11]{MR1817560} 
and \cite[Definition 3.9]{WillettThesis}.

\begin{definition}
 Let $f,g\colon X\to Y$ be coarse maps between proper metric
 spaces. We say that they are
 {\itshape coarsely homotopic} if there exists a metric subspace 
$Z = \{(x,t):1\leq t\leq T_x\}$ 
of $X\times \N$ and a coarse map
 $h\colon Z\to Y$, such that
\begin{enumerate}
 \item the map $x\mapsto T_x$ is bornologous,
 \item $h(x,1) = f(x)$, and
 \item $h(x,T_x) = g(x)$.
\end{enumerate}
Here $\N$ is a set of positive integers 
and we equip $X\times \N$ with the $l_1$-metric, that is, 
$d_{X\times \N}((x,n),(y,m)):= d_X(x,y)+ \abs{n-m}$ for 
$(x,n),(y,m)\in X\times \N$, where $d_X$ is the metric on $X$.
\end{definition}
Coarse homotopy is then an equivalence relation on coarse maps.

\begin{definition}
\label{def:generalized-coarse-cohomology}
 A {\itshape generalized coarse cohomology theory} is a contravariant functor
$MX^* = \{MX^p\}_{p \in \Z}$ from the coarse category to the category of
 $\Z$-graded Abelian groups, such that
\begin{enumerate}[(i)]
 \item \label{axiom:product-with-half-line}
       for a proper metric space $\mX$, we have
       $MX^*(\mX\times \N) = 0$, and
 \item \label{axiom:Mayer-Vietoris}
       if $\mX = A\cup B$ is an $\omega$-excisive decomposition, there
       exists a functorial long exact sequence, 
       called a Mayer-Vietoris exact sequence, 
\[
 \dots \to MX^p(\mX) \to MX^p(A) \oplus MX^p(B) 
       \to MX^p(A\cap B) \to MX^{p+1}(\mX) \to \cdots.
\]
\newcounter{axiomnum}
\setcounter{axiomnum}{\value{enumi}}
\end{enumerate}
\end{definition}

The following notion of coarsely flasque spaces is based on 
\cite[Definition 3.6]{WillettThesis}.
\begin{lemma}
\label{lem:flasque}
Let $MX^*$ be a generalized coarse cohomology theory.
Let $\mX$ be a space with a proper metric $d$. 
Suppose that $\mX$ is coarsely flasque, 
that is, 
there exists a coarse map $\phi:\mX\to \mX$ such that 
\begin{enumerate}
\item $\phi$ is close to the identity;
\item for any bounded subset $K\subset Y$, there exists $N_K\in \N$
such that for any $n\ge N_K$, $\phi^n(\mX)\cap K=\emptyset$;   
 \item for all $R>0$, there exists $S>0$ such that for all $n\in \N$ and
       all $x,y \in \mX$ with $d(x,y)<R$, we have 
       $(d(\phi^n(x),\phi^n(y)))<S$.
\end{enumerate}
Then $MX^*(\mX)=0$.
\end{lemma}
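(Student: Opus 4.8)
The plan is to mimic the classical argument that flasque spaces have vanishing generalized (co)homology, adapting it to the coarse setting using the two axioms at hand: vanishing on products with $\N$, and the Mayer--Vietoris sequence. The key construction is a coarse analogue of the infinite-swindle/mapping-telescope, built from the map $\phi$. First I would form the subspace
\[
 T = \{(x,n) \in X\times\N : 1\le n, \ \text{identifying } (x,n)\sim(\phi(x),n-1)\ \text{suitably}\},
\]
more precisely the ``coarse mapping telescope'' $T\subset X\times\N$ whose fiber over $n$ is $\phi^{n-1}(X)$, together with the inclusion $X\times\{1\}\hookrightarrow T$ and the obvious coarse map $p\colon T\to X\times\N$. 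Conditions (1) and (3) on $\phi$ guarantee that $p$ is bornologous and proper, so $T$ really does sit coarsely inside $X\times\N$; condition (2) guarantees that $T$ is ``proper'' in the sense that bounded sets meet only finitely many fibers. By axiom~(\ref{axiom:product-with-half-line}) and functoriality (one needs $T$ coarsely equivalent to, or at least a coarse retract inside, $X\times \N$) one gets $MX^*(T)=0$, or at any rate that the relevant maps to/from $MX^*(T)$ vanish.

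Next I would decompose $T$ as an $\omega$-excisive union $T = A\cup B$, where $A$ is the ``even part'' $\bigsqcup_{n\ \mathrm{odd}} \phi^{n-1}(X)$ thickened to overlap its neighbors and $B$ the ``odd part'' (the standard telescope-into-two-mapping-cylinders trick); the overlap $A\cap B$ is then coarsely a disjoint union of copies of the graphs of the iterates $\phi^n$, each of which is coarsely equivalent to $X$ via the projection (using condition (3) to control the equivalence uniformly). The Mayer--Vietoris sequence of axiom~(\ref{axiom:Mayer-Vietoris}) for $T=A\cup B$ then reads, after identifying $MX^*(A)$, $MX^*(B)$, $MX^*(A\cap B)$ with countable products $\prod MX^*(X)$ and $MX^*(T)=0$, as a long exact sequence in which the connecting map is (up to sign) the shift-minus-identity map $1-S_*$ on $\prod_{n} MX^*(X)$, where $S_*$ is induced by $\phi$. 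Exactness forces $1-S_*$ to be an isomorphism on $\prod MX^*(X)$; but then applying the same bookkeeping to the ``half-infinite'' telescope starting at $X\times\{1\}$, whose boundary contribution is a single copy of $MX^*(X)$, one extracts that $MX^*(X)$ itself is the cokernel (or kernel) of an isomorphism, hence $MX^*(X)=0$. Concretely: since $\phi$ is close to the identity, $S_*=\mathrm{id}$, so $1-S_*=0$, and an isomorphism that is zero forces the group to be $0$.

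I expect the main obstacle to be the first step: verifying rigorously that the coarse mapping telescope $T$ is coarsely equivalent to $X\times\N$ (so that axiom~(\ref{axiom:product-with-half-line}) applies), and that the two-piece decomposition $T=A\cup B$ is genuinely $\omega$-excisive with $A\cap B$ coarsely $\bigsqcup_n X$. Both require delicate uniform control of the metric estimates coming from condition (3), since the iterates $\phi^n$ may distort distances by a fixed but possibly large factor, and one must ensure the neighborhoods used to thicken $A$ and $B$ stay uniformly bounded across all levels. A secondary technical point is justifying that $MX^*$ of a coarse disjoint union $\bigsqcup_n X$ is the product $\prod_n MX^*(X)$ — this does not follow formally from the two axioms unless one uses them on the pair (disjoint union) $=$ (one piece) $\cup$ (rest) and an induction/compatibility argument, or includes it as part of the $\omega$-excision input; I would handle it by an explicit Mayer--Vietoris telescoping over the index set $\N$. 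Once these geometric facts are in place, the algebra (shift-minus-identity is an isomorphism, identity map is zero, hence group vanishes) is routine.
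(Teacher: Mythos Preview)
Your telescope-plus-Mayer--Vietoris strategy is a substantial detour, and the gaps you flag (coarse equivalence of $T$ with $X\times\N$, $\omega$-excisiveness of $A\cup B$, the product formula for coarse disjoint unions) are real and not entirely trivial to close with only the two axioms available.  The paper's argument bypasses all of this and uses only axiom~(\ref{axiom:product-with-half-line}), not Mayer--Vietoris.  The key observation you are missing is that conditions (2) and (3) are exactly what is needed to make the map
\[
 \Phi\colon X\times\N \longrightarrow X,\qquad \Phi(x,n)=\phi^n(x)
\]
a \emph{coarse} map: condition (3) gives bornologousness (uniformly in $n$), and condition (2) gives properness.  Then one has a commutative triangle
\[
\xymatrix{
 X \ar@{^{(}->}[dr]_{\iota}\ar[rr]^{\phi}&&X\\
 &X\times\N \ar[ur]_{\Phi}&
}
\]
with $\iota(x)=(x,1)$, so $\phi^*=\iota^*\circ\Phi^*$ factors through $MX^*(X\times\N)=0$.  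Since $\phi$ is close to the identity by condition (1), $\phi^*=\mathrm{id}$, and hence $MX^*(X)=0$.

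In short: rather than embedding $X$ into a telescope and analyzing the telescope, the paper collapses $X\times\N$ back down to $X$ via $\Phi$.  Your approach, if carried through, would recover the same conclusion by a much longer route, but the factorization above makes the Mayer--Vietoris machinery, the disjoint-union product formula, and the shift argument all unnecessary.
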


\begin{proof}
 We define a coarse map $\Phi\colon \mX\times \N \to \mX$ as
$\Phi(x,n) = \phi^n(x)$.
 Then we have a commutative diagram
\begin{align*}
 \xymatrix{
 \mX \ar@{^{(}->}[dr]\ar[rr]^\phi&&\mX\\
 &\mX\times \N \ar[ur]_{\Phi}&.
}
\end{align*} Here $\mX\hookrightarrow \mX\times\N$ is the inclusion into 
$\mX\times \{1\}$. By axiom (\ref{axiom:product-with-half-line}), the induced
map $\phi^*\colon MX^*(\mX) \to MX^*(\mX)$ factors through zero.
Since $\phi$ is close to the identity, $MX^*(\mX) = 0$.
\end{proof}

The following coarse homotopy invariance follows from a standard
argument using Mayer-Vietoris axiom~(\ref{axiom:Mayer-Vietoris}) and 
Lemma~\ref{lem:flasque}. (See \cite[Proposition 12.4.12]{MR1817560} 
and \cite[Theorem 4.3.12.]{WillettThesis}).
\begin{proposition}
 If two coarse maps $f,g\colon X\to Y$ are coarsely homotopic,
 the induced maps $f^*$ and $g^*$ are equal.
\end{proposition}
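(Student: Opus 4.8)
The plan is to use the standard mapping-cylinder argument for coarse homotopy, reducing the statement to the two axioms of a generalized coarse cohomology theory together with Lemma~\ref{lem:flasque}. Let $f,g\colon X\to Y$ be coarsely homotopic via a coarse map $h\colon Z\to Y$ defined on $Z=\{(x,t):1\le t\le T_x\}\subset X\times\N$, where $x\mapsto T_x$ is bornologous, $h(x,1)=f(x)$ and $h(x,T_x)=g(x)$. First I would identify two natural closed subspaces of $X\times\N$ whose union is $\omega$-excisive: the ``slab'' $Z$ itself (the region below the graph of $T_x$) and the complementary ``upper'' region $Z'=\{(x,t):t\ge T_x\}$. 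Their intersection is the graph $\{(x,T_x):x\in X\}$, which, because $x\mapsto T_x$ is bornologous, is coarsely equivalent to $X$ via the first projection. One checks that $X\times\N = Z\cup Z'$ is $\omega$-excisive: given $R>0$, bornologousness of $T$ bounds how far a point of $\Pen(Z;R)\cap\Pen(Z';R)$ can be from the graph. Then axiom~(\ref{axiom:product-with-half-line}) gives $MX^*(X\times\N)=0$, and the Mayer--Vietoris axiom~(\ref{axiom:Mayer-Vietoris}) applied to this decomposition yields an isomorphism $MX^*(Z)\oplus MX^*(Z')\xrightarrow{\cong} MX^*(X\times\N\ \text{intersection}) = MX^*(X)$ up to the usual degree shift; more precisely the connecting maps identify $MX^*$ of the graph with the direct sum restricted appropriately.

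Next I would observe that $Z'$ is coarsely flasque: the shift $\phi(x,t)=(x,t+1)$ on $Z'$ is close to the identity, eventually pushes any bounded set off to infinity (since within any bounded set the coordinate $t$ is bounded), and is uniformly expansive in the sense of condition (3) of Lemma~\ref{lem:flasque} — indeed it preserves the $l_1$-metric. Hence by Lemma~\ref{lem:flasque}, $MX^*(Z')=0$. Combined with $MX^*(X\times\N)=0$, the Mayer--Vietoris sequence for $X\times\N=Z\cup Z'$ collapses to show that the restriction-to-the-graph map $MX^*(Z)\to MX^*(\text{graph})\cong MX^*(X)$ is an isomorphism, with inverse realized by the first projection $Z\to X$ followed by this identification. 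In particular the two inclusions $\iota_f\colon X\to Z$, $x\mapsto(x,1)$, and $\iota_g\colon X\to Z$, $x\mapsto(x,T_x)$, both of which are sections of the (coarsely invertible) projection $Z\to X$, induce the same map $MX^*(Z)\to MX^*(X)$, namely the inverse of the isomorphism above. (Both are coarse maps: $\iota_f$ obviously, and $\iota_g$ because $T$ is bornologous.)

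Finally I would combine these facts. Since $f = h\circ\iota_f$ and $g = h\circ\iota_g$ as coarse maps, functoriality of $MX^*$ gives $f^* = \iota_f^*\circ h^*$ and $g^* = \iota_g^*\circ h^*$; and since $\iota_f^* = \iota_g^*$ on $MX^*(Z)$ by the previous paragraph, we conclude $f^* = g^*$. The main obstacle I anticipate is the careful bookkeeping needed to show $X\times\N = Z\cup Z'$ is genuinely $\omega$-excisive and that the Mayer--Vietoris sequence, together with the vanishing of $MX^*(X\times\N)$ and $MX^*(Z')$, really does identify $\iota_f^*$ with $\iota_g^*$ and not merely show both are isomorphisms onto the same group; the usual way to pin this down is to note that both $\iota_f$ and $\iota_g$ are coarse inverses (one-sided) of the projection $p\colon Z\to X$ — i.e. $p\circ\iota_f$ and $p\circ\iota_g$ are each close to $\mathrm{id}_X$ — and $p^*$ is an isomorphism, so $\iota_f^* = (p^*)^{-1} = \iota_g^*$. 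One should also double-check that all maps in sight (in particular $Z\hookrightarrow X\times\N$, the projection $Z\to X$, and $\iota_g$) are coarse and that the metric on $Z$ inherited from the $l_1$-metric on $X\times\N$ makes everything proper, which is where the hypothesis that $x\mapsto T_x$ is bornologous is used repeatedly. This is exactly the argument of \cite[Proposition 12.4.12]{MR1817560} and \cite[Theorem 4.3.12]{WillettThesis}, transported verbatim to an abstract $MX^*$ satisfying the two axioms.
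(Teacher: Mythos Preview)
Your proposal is correct and is precisely the standard argument the paper has in mind: the paper does not spell out a proof but simply states that the result follows from the Mayer--Vietoris axiom together with Lemma~\ref{lem:flasque}, citing \cite[Proposition 12.4.12]{MR1817560} and \cite[Theorem 4.3.12]{WillettThesis}, which is exactly the mapping-cylinder decomposition $X\times\N = Z\cup Z'$ you carry out. Your handling of the final identification $\iota_f^* = \iota_g^*$ via the common coarse left inverse $p\colon Z\to X$ is the right way to close the argument.
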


The anti-\v{C}ech system is introduced in \cite[Section 3]{MR1147350}
to relate the coarse cohomology to the \v{C}ech cohomology.
It is also used in \cite{MR1388312} to formulate a coarse
homology theory.
\begin{definition}
\label{def:anti-Cech-sys}
 Let $X$ be a metric space. Let $\U(1),\U(2),\dots$ be a sequence of
 locally finite covers of $X$. We say that they form an anti-\v{C}ech
 system if there exists a sequence of real numbers 
$R_n\to \infty$ such that for all $n$, 
\begin{enumerate}
 \item each set $U\in \U(n)$ has diameter less than or 
equal to $R_n$, and
 \item the covering $\U(n+1)$ has a Lebesgue number $\delta_{n+1}$
       greater than or equal to $R_n$, that is, any set of diameter less
       than or equal to $\delta_{n+1}$ is contained in some element of
       $\U(n+1)$.
\end{enumerate}
\end{definition}
These conditions imply that for each
$n$, 
there exists a map
$\varphi_{n}\colon \U(n)\to \U(n+1)$ such that 
$U\subset \varphi_n(U)$ for all $U\in \U(n)$. We call $\varphi_{n}$ a
coarsening map. We remark that this map is called a refining map in the
context of \v{C}ech cohomology theory.
A coarsening map $\varphi_{n}$ induces a proper
simplicial map $\abs{\U(n)}\rightarrow \abs{\U(n+1)}$ of the nerve
complexes, which we also denote by the same symbol $\varphi_{n}$ and
also call a coarsening map. 
In this paper, we use the same notation for the nerve of an anti-\v{C}ech
system, and its geometric realization.

Now we recall the definition of a generalized cohomology theory on the
category of locally compact and second countable Hausdorff spaces, which
we abbreviate to LCSH. (See \cite[Section 7.1]{MR1817560} for LCSH.)
A generalized cohomology theory on LCSH  is a contravariant functor
$M^*=\{M^p\}$ from LCSH to the category of $\Z$-graded Abelian groups
such that 
\begin{enumerate}
 \item $M^*$ is a homotopy functor, and 
 \item if $W\subset X$ is a closed subset, there is a functorial long
       exact sequence
\[
 \dots \to M^p(X\setminus W) \to M^p(X)
       \to M^p(W) \xrightarrow{\partial} M^{p+1}(X\setminus W) \to \cdots.
\]
\end{enumerate}
Examples of such cohomology theories are $K$-theory $K^*(-)$ and 
the Alexander-Spanier cohomology with compact supports $H_c^*(-)$. These
cohomology theories satisfy the continuity property
\begin{enumerate}
\setcounter{enumi}{2}
 \item for a
projective limit $X=\varprojlim X_n$ of locally compact second countable
Hausdorff spaces, we have $M^*(X)\cong \varinjlim M^*(X_n)$.
\end{enumerate}

Let $W$ be a compact second countable Hausdorff space. 
Then the constant map $\pi_W\colon W\to \{*\}$ is proper, where $\{*\}$
is a one point space. The reduced $M$-cohomology of $W$, denoted by
$\tilde{M}^*(W)$, is defined as the cokernel of $\pi_W^*$.

Let $X$ be a proper metric space and let $(W,\zeta)$ be a corona of $X$. 
Let $\partial \colon M^p(W) \rightarrow M^{p+1}(X)$ be a boundary
homomorphism of the long exact sequence for $W\subset X\cup_\zeta W$. 
Let $\pi_W\colon W\rightarrow \{*\}$ be a constant map. Since $\pi_W$
factors through $X\cup_\zeta W \rightarrow \{*\}$, the image
$\pi_W^*(M^p(\{*\}))$ lies on the kernel of $\partial$.
Thus we have a boundary homomorphism 
$\partial \colon \tilde{M}^p(W)\rightarrow M^{p+1}(X)$.

\begin{definition}
\label{def:coarsening-of-cohomology-theory} Let 
$M^* = \{M^p\}_{p\in \Z}$ 
 be a generalized cohomology theory on locally compact and second
 countable Hausdorff spaces. We say that a generalized coarse cohomology
 theory $MX^*$ is a {\itshape coarsening} of $M^*$ if $MX^*$ satisfies the
 following:
\begin{enumerate}[(i)]
 \setcounter{enumi}{\value{axiomnum}}
  \item \label{axiom:character-map}
	For a proper metric space $X$, there exists a {\itshape character map}
	$c\colon MX^*(X)\to M^*(X)$, which is an isomorphism if
	$X$ is uniformly contractible 
	and has bounded geometry. It is
	compatible with Mayer-Vietoris exact sequences of $MX^*$ and
	$M^*$ for $\omega$-excisive decompositions. 
 \item \label{axiom:Milnor}
       Let $\{\mathcal{U}_n\}$ be an anti-\v{C}ech system of a proper metric
 space $X$. There exists a functorial short
 exact sequence
\[
 0 \to \limone M^{q-1}(\abs{\mathcal{U}_n}) 
 \to MX^q(X) \xrightarrow{\theta} \varprojlim
 M^q(\abs{\mathcal{U}_n}) \to 0.
\]
Moreover, the composite of $\theta$ and a canonical map 
$\lambda\colon \varprojlim M^q(\abs{\mathcal{U}_n}) \to M^q(X)$ is equal
       to the character map, where $\lambda$ is given by a partition of
       unity. We call this a Milnor exact sequence.
 \item \label{axiom:transgression-map}
       Let $(W,\zeta)$ be a corona of $X$. 
       Then there exists a {\itshape transgression} map 
       $T_W\colon \tilde{M}^{q-1}(W)\to MX^q(X)$ such that 
       $c\circ T_W = \partial$, 
       here $\partial \colon \tilde{M}^{q-1}(W) \to M^q(X)$ is
       the boundary homomorphism.
       The transgression map is natural in the following sense.
       For proper metric spaces $X$ and $Y$, and for coronae $(W,\zeta)$ and
       $(Z,\xi)$ respectively of $X$ and $Y$, if a coarse map 
       $f\colon X\to Y$ covers a continuous map $\eta\colon W\rightarrow
       Z$, then the following is commutative.
\begin{align*}
  \xymatrix{
 \tilde{M}^{q-1}(Z) \ar[d]^{T_Z}\ar[r]^{\eta^*}& \tilde{M}^{q-1}(W) \ar[d]^{T_W}\\
 MX^q(Y) \ar[r]^{f^*}& MX^q(X).
}
\end{align*}
\end{enumerate}
\end{definition}

\begin{proposition}
\label{prop:coarse_cohomology}
 The coarse cohomology $HX^*(-)$, the coarse $K$-theory $KX^*(-)$ and
 the $K$-theory of the reduced stable Higson corona $K_*(\rsHig(-))$ are
 generalized coarse cohomology theories. Especially, $KX^*(-)$ and
 $HX^*(-)$  are respectively the coarsening of the $K$-theory and 
 the Alexander-Spanier cohomology with compact supports .
\end{proposition}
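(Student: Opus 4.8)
The plan is to verify, for each of the three functors, the axioms of Definition~\ref{def:generalized-coarse-cohomology}, and for $HX^*$ and $KX^*$ in addition the coarsening axioms of Definition~\ref{def:coarsening-of-cohomology-theory}. (Note that $K_*(\rsHig(-))$ is only asserted to be a generalized coarse cohomology theory, so for it only axioms~(\ref{axiom:product-with-half-line}) and~(\ref{axiom:Mayer-Vietoris}) are at issue.) Contravariance on the coarse category is built into the definitions of these theories in Roe~\cite{MR1147350} and Emerson--Meyer~\cite{MR2225040}; we extend the $\Z_2$-graded groups $KX^*$ and $K_*(\rsHig(-))$ $2$-periodically so as to regard them as $\Z$-graded ones.

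First I would dispose of axiom~(\ref{axiom:product-with-half-line}). The space $X\times\N$ is coarsely equivalent to $X\times[0,\infty)$, which is flasque: the unit translation in the $[0,\infty)$-factor is close to the identity, has uniformly controlled displacement, and eventually moves every bounded set off any prescribed bounded set. For $HX^*$ the vanishing on such a space is proved directly at the cochain level in~\cite{MR1147350} using the contraction associated with this translation; for $K_*(\rsHig(-))$ it follows from an Eilenberg swindle on the stable Higson corona of a flasque space (equivalently, from coarse homotopy invariance), as established in~\cite{MR2225040}; for $KX^*$ it is part of the verification carried out in Section~\ref{sec:coarse-k-thoery-proof}. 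I would emphasise that Lemma~\ref{lem:flasque} is not available at this point, since its proof already invokes axiom~(\ref{axiom:product-with-half-line}).

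Next, axiom~(\ref{axiom:Mayer-Vietoris}): for an $\omega$-excisive decomposition $X=A\cup B$, the required long exact sequence is, for $HX^*$, the coarse Mayer--Vietoris principle of Higson--Roe--Yu~\cite{MR1219916} (see also~\cite[Section~5]{MR1147350}), and, for $KX^*$ and $K_*(\rsHig(-))$, it is among the technical results of~\cite{MR2225040}; in each case functoriality with respect to coarse maps respecting the decomposition is immediate from the construction. This completes the proof that the three functors are generalized coarse cohomology theories. For the coarsening axioms~(\ref{axiom:character-map})--(\ref{axiom:transgression-map}) I would take $M^*$ to be compactly supported Alexander--Spanier cohomology $H^*_c(-)$ for $HX^*$, and $K$-theory $K^*(-)$ for $KX^*$. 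The character map is Roe's map $HX^*(X)\to H^*_c(X)$ (respectively the comparison map $KX^*(X)\to K^*(X)$ of~\cite{MR2225040}); I would establish the Milnor sequence~(\ref{axiom:Milnor}) first, by identifying $MX^*(X)$ with the $M^*$-cohomology of the tower $\{\abs{\mathcal{U}_n}\}$ of nerves of an anti-\v{C}ech system and applying the Milnor $\limone$ sequence for the associated inverse system---for $K$-theory this uses the $\sigma$-$C^*$-algebra version established in Appendix~\ref{appendix:Milnor}, for $HX^*$ it is Roe's anti-\v{C}ech argument, and for $KX^*$ it is supplied in Section~\ref{sec:coarse-k-thoery-proof}---and then deduce that the character map is an isomorphism when $X$ is uniformly contractible of bounded geometry, since in that case the tower $\{M^*(\abs{\mathcal{U}_n})\}$ is pro-isomorphic to the constant tower with value $M^*(X)$, so that $\limone$ vanishes and $MX^*(X)\cong M^*(X)$. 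Compatibility with Mayer--Vietoris is built into the constructions. Finally the transgression map~(\ref{axiom:transgression-map}) is the map~(\ref{Tr:3}) of~\cite[Section~5.3]{MR1147350} for $HX^*$ and the map~(\ref{Tr:2}), constructed later in the paper, for $KX^*$; the relation $c\circ T_W=\partial$ and the naturality square for covering coarse maps are read off from those constructions.

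The step I expect to be the real obstacle is the coarsening package~(\ref{axiom:Milnor})--(\ref{axiom:transgression-map}) for the coarse $K$-theory: one must produce a functorial identification of $KX^*(X)$ with the inverse-limit data attached to the nerves of an anti-\v{C}ech system, control the resulting $\limone$ term, and check that the character map, the Milnor sequence and the transgression map are simultaneously natural in $X$ for coarse maps and in the corona for covering maps. That is precisely the business of Section~\ref{sec:coarse-k-thoery-proof}; by contrast, everything needed for $HX^*$ is contained in or readily derived from~\cite{MR1147350}, and the two axioms at issue for $K_*(\rsHig(-))$ follow from~\cite{MR2225040}.
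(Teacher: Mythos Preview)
Your proposal is correct and follows the same approach as the paper: defer the claims for $HX^*$ to Roe~\cite{MR1147350}, the claims for $K_*(\rsHig(-))$ to the literature, and the claims for $KX^*$ to the verification in Section~\ref{sec:coarse-k-thoery-proof}. Two attribution points deserve correction. First, the Mayer--Vietoris sequence for $K_*(\rsHig(-))$ is not in~\cite{MR2225040}; the paper takes it from Willett's thesis~\cite{WillettThesis} (see Proposition~\ref{prop:K-theory-of-sHig-is-cohomology}). Second, and more importantly, the Mayer--Vietoris sequence for $KX^*$ cannot simply be quoted from~\cite{MR2225040}: the paper redefines $KX^*$ via anti-\v{C}ech systems rather than Rips complexes, so axiom~(\ref{axiom:Mayer-Vietoris}) must be re-proved from scratch for this definition, which is done in Section~\ref{sec:coarse-k-thoery-proof} using the pullback lemma for $\sigma$-$C^*$-algebras (Lemma~\ref{pullback}). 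You correctly flag Section~\ref{sec:coarse-k-thoery-proof} as the place where the real work for $KX^*$ happens, but that work includes axiom~(\ref{axiom:Mayer-Vietoris}), not only the coarsening package.
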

\begin{proof}
 The statements for $HX^*$ are proved in \cite{MR1147350}, those for 
$K_*(\rsHig(-))$ are proved in \cite{MR2225040} and
 \cite{WillettThesis}. 
See Proposition~\ref{prop:K-theory-of-sHig-is-cohomology}.
The statements for $KX^*$ are proved in Section~\ref{sec:coarse-k-thoery}.
\end{proof}

\subsection{Coarse homology theories}
\label{sec:coarse-homol-theor}
Generalized coarse homology theories are formulated similarly to 
Definition~\ref{def:generalized-coarse-cohomology}, but we omit the
detail. We remark that for a generalized homology theory $M_*$ on LCSH,
we have a generalized coarse homology theory $MX_*$ by defining 
$MX_*(X) := \varinjlim M_*(\abs{\U(j)})$ where $X$ is a proper metric
space and $\{\U(j)\}_{j\in \N}$ is an anti-\v{C}ech system of $X$. 
(See \cite[Section 2]{MR1388312}.) We say that $MX_*$ is a coarsening of
$M_*$. Using a partition of unity, we can define the coarsening map 
$c\colon M_*(X)\to MX_*(X)$.
 If $X$ is uniformly contractible and has bounded geometry, the
 coarsening map $c$ is an isomorphism. Emerson-Mayer proved a similar
 statement for coarse $K$-theory. (See \cite[Theorem 4.8]{MR2225040}.)
 Their proof also works for $MX_*$.
We remark that this statement is first proved in~\cite[Proposition
3.8]{MR1388312} under an additional assumption that $X$ is a simplicial
complex with a spherical metric. 

The transgression map is constructed as follows.
Let $X$ be a proper metric space and let $(W,\zeta)$ be a corona of $X$.
Let $\{\U_n\}_{n\in \N}$ be an anti-\v{C}ech system of $X$.
Since the nerve complex $\abs{\U_n}$ is coarsely equivalent to $X$
(Proposition \ref{prop:nerve-coarse-str}),
the pair
$(W,\zeta)$ is also a corona of $\abs{\U_n}$ and we obtain a
compact space
$\abs{\U_n}\cup W$. A long exact sequence
(\cite[Definition 7.1.1]{MR1817560}) for $W\subset \abs{\U_n}\cup W$ 
defines the boundary homomorphism
$\partial \colon M_*(\abs{\U_n}) \to \tilde{M}_{*-1}(W)$. 
Here
$\tilde{M}_*(W)$ is the reduced $M$-homology of $W$ defined as the
kernel of ${\pi_W}_*$, where $\pi_W\colon Y\rightarrow \{*\}$ is a constant map.
By taking the
inductive limit, we obtain 
${T_W}\colon MX_*(X) \to \tilde{M}_{*-1}(W)$. From the construction, it is
easy to see that the transgression map is natural in the obvious sense.

The $K$-theory of the Roe-algebra, the coarse $K$-homology are generalized
coarse homology theories and the coarse $K$-homology is the coarsening of
the $K$-homology. See \cite{MR1388312}, \cite{MR1219916} and \cite{MR1817560}.
\section{The coarse $K$-theory}
\label{sec:coarse-k-thoery}
\subsection{The coarse $K$-theory}
\label{sec:coarse-k-thoery-proof}
In this section we see that 
the coarse $K$-theory $KX^*(-)$ 
is a generalized coarse cohomology theory 
and is the coarsening of the $K$-theory $K^*(-)$
in the sense of the previous section. 
Originally, $KX^*(-)$ is defined and studied by Emerson-Meyer 
\cite[Section 4]{MR2225040}. 
We introduce a definition of $KX^*(-)$ by a slightly different manner, 
but we confirm that they are compatible. 
The original definition uses the Rips complex, while ours uses
the anti-\v{C}ech system, which is more flexible and essentially used in the
proof of Proposition~\ref{prop:weak-coarsening}.

Let $X$ be a space with a proper metric $d$. 
Suppose that $\{\U(k)\}_{k\in\N}$ is an anti-\v{C}ech system of $X$
with uniformly bounded diameter $R_k\to \infty$  and 
Lebesgue numbers $\delta_k\geq  R_{k-1}$ of $\U(k)$.


For each $k\in \N$, we fix a coarsening map 
$\psi_{k,k+1}\colon\abs{\U(k)} \to \abs{\U(k+1)}$.
We put $\psi_{k,l}:=\psi_{l-1,l}\circ\cdots \circ\psi_{k,k+1}$ for each $k\in \N$
and $l\in \N$ with $k\le l-1$ and we also call them coarsening maps. 
We denote the inductive limit by $\mathcal X$, 
which depends on choice of $\psi_{k,k+1}$. 
Also we denote the canonical map $\abs{\U(k)}\to \mathcal X$ 
by $\psi_{k,\infty}$ for each $k\in\N$.
We put 
\[
C_0(\mathcal X):=\{f:\mathcal X\to \mathbb C 
\left| \ f\circ \psi_{k,\infty}\in C_0(\abs{\U(k)})\text{ for any }k\in\N \right.\}
\]
and we identify it with the projective limit of $\{C_0(\abs{\U(k)})\}_{k\in \N}$. 
This is a $\sigma$-$C^\ast$-algebra. 
Now we define $KX^*(X)$ as $RK_*(C_0(\mathcal X))$.
Here $RK_*(-)$ is a representable $K$-theory of 
$\sigma$-$C^\ast$-algebras \cite{MR1050490}.
We abbreviate $RK_*(C_0(\mathcal X))$ to $RK^*(\mathcal X)$. 

We remark that by Phillips \cite{MR1050490}, there exists an exact
sequence, called a Milnor exact sequence,
\begin{align}
\label{eq:Milnor}
 0\to \limone K_{p+1}(C_0(\abs{\U(k)})) \to RK^p(\mathcal X) \to
\varprojlim K_p(C_0(\abs{\U(k)})) \to  0.
\end{align}
See also Appendix~\ref{appendix:Milnor}.

\begin{lemma}
\label{lem:injective}
 Under the above setting, there exists an
 anti-\v{C}ech system $\{\U'(k)\}$ such that a coarsening map
$\psi'_k\colon \U'(k)\to \U'(k+1)$ is injective for each $k\in \N$ and 
$RK^*(\mathcal X)\cong RK^*(\varinjlim \U'(k))$.
\end{lemma}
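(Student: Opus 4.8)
The plan is to build the refined anti-\v{C}ech system $\{\U'(k)\}$ by passing to a subsequence of $\{\U(k)\}$ so that the coarsening maps between consecutive levels can be chosen to be injective on vertex sets, and then to identify the two inductive limits up to an isomorphism of the associated representable $K$-theory. First I would observe that if $\U(k)$ is a locally finite cover of the proper metric space $X$ by sets of diameter $\le R_k$, then, passing to a rapidly growing subsequence $R_{k_1} \ll R_{k_2} \ll \cdots$, one can arrange that each $U\in\U(k_j)$ is contained in a \emph{unique} maximal element of $\U(k_{j+1})$ — or at least that a coarsening map $\varphi$ with $U\subset\varphi(U)$ can be chosen so that distinct elements of $\U(k_j)$ whose images agree are ``merged'' away. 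More precisely, the standard trick is: set $\U'(k):=\U(k_k)$ for a suitable subsequence, and for each element $U\in\U'(k)$, pick $\varphi(U)\in\U'(k+1)$ with $U\subset\varphi(U)$; then replace $\U'(k)$ by its image under the equivalence relation ``$U\sim V$ iff $U,V$ have the same sequence of forward coarsenings in all higher levels'', which does not change the nerve up to the relevant homotopy type because all the identified sets have comparable (uniformly bounded) diameter and the cover remains locally finite and has the requisite Lebesgue number at the next level. The key point is that after this modification the coarsening map $\psi'_k\colon\U'(k)\to\U'(k+1)$ is injective.

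Next I would verify that $\{\U'(k)\}$ is again an anti-\v{C}ech system: the diameter bound and the Lebesgue-number condition are inherited from the subsequence (that is exactly why we pass to $R_{k_j}$ growing fast enough), and the simplicial maps $\psi'_k\colon\abs{\U'(k)}\to\abs{\U'(k+1)}$ remain proper coarsening maps, now injective on vertices. Then I would compare the two towers. The inclusion of $\{\abs{\U'(k)}\}$ as a cofinal subtower of the refinement of $\{\abs{\U(k)}\}$ — together with the freedom, established in the discussion preceding Lemma~\ref{lem:injective} and in Proposition~\ref{prop:nerve-coarse-str}, that any two anti-\v{C}ech systems of $X$ have interleaving coarsening maps — induces maps of the pro-$C^*$-algebras $C_0(\mathcal X)$ and $C_0(\varinjlim\U'(k))$ that are mutually inverse up to the equivalence used in defining $RK_*$. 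Concretely, by the Milnor exact sequence \eqref{eq:Milnor}, $RK^*(\mathcal X)$ depends only on the pro-group $\{K_*(C_0(\abs{\U(k)}))\}$ up to pro-isomorphism, and a cofinal subtower induces a pro-isomorphism; hence $RK^*(\mathcal X)\cong RK^*(\varinjlim\U'(k))$.

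The main obstacle I anticipate is the merging step: one must check that collapsing the ``redundant'' elements of each cover genuinely produces a locally finite cover whose nerve carries a proper simplicial coarsening map to the next level, and that this collapse does not destroy the Lebesgue-number inequality $\delta'_{k+1}\ge R'_k$. This is where passing to a sufficiently sparse subsequence is essential — one needs enough room between consecutive scales that a bounded ``thickening'' coming from the identification is still dominated by $R'_{k+1}$. Once that bookkeeping is in place, the rest is the formal interleaving argument for anti-\v{C}ech systems plus the pro-homotopy invariance of $RK_*$ via \eqref{eq:Milnor}, both of which are routine. I would also remark that injectivity of $\psi'_k$ on vertices yields injectivity of the induced simplicial map, which is the property actually needed later (in the proof of Proposition~\ref{prop:weak-coarsening}).
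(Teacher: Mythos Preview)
Your merging approach has a genuine gap. You propose to identify elements $U\sim V$ in $\U'(k)$ when they have ``the same sequence of forward coarsenings in all higher levels'', and you assert that the identified sets have uniformly bounded diameter. But this equivalence means exactly that the images of $U$ and $V$ in $\varinjlim\U(k)$ agree, i.e.\ that $\psi_{k,m}(U)=\psi_{k,m}(V)$ for \emph{some} $m$; the union of such an equivalence class is then only contained in an element of $\U(m)$, hence has diameter at most $R_m$ --- and $m$ can be arbitrarily large. So the merged ``cover'' at level $k$ has no uniform diameter bound, and you do not get an anti-\v{C}ech system. (If instead you merge only when $\psi_k(U)=\psi_k(V)$, the diameters stay below $R_{k+1}$, but the induced map on quotients is again not injective: two classes with distinct images in $\U(k+1)$ can still collide in $\U(k+2)$.) Choosing representatives rather than unions does not help either, because the chosen subset of $\U(k)$ need not cover $X$. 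Passing to a sparse subsequence does not cure any of this: if $\psi_k(U)=\psi_k(V)$ then the images agree at every higher level, sparse or not. Your alternative suggestion --- arranging that each $U$ lies in a \emph{unique} maximal element of the next cover --- is a statement about uniqueness of the target, not injectivity of the map, and there is no mechanism by which sparsifying the sequence would force it anyway.

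The paper's construction goes in the opposite direction: rather than collapsing the domain, it enlarges the codomain. One takes countably many labelled copies $\U_i(k)$ of $\U(k)$ and builds $\U'(k)$ as a finite-but-growing union $\U_1(k)\subset\U'(k)\subset\bigsqcup_i\U_i(k)$, so that whenever $\psi_{k,k+1}$ would send two elements to the same target one routes them to different copies of that target. This keeps diameters and Lebesgue numbers exactly those of $\U(k)$, so the anti-\v{C}ech condition is automatic, and the projection $p_k\colon\U'(k)\to\U(k)$ (forget the label) admits a section $e_k$; one then shows $e_k\circ p_k$ is contiguous to the identity and uses the Milnor sequence and the five lemma to conclude $RK^*(\mathcal X)\cong RK^*(\varinjlim\U'(k))$. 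The key idea you are missing is this duplication trick; the rest of your outline (interleaving systems, Milnor sequence) is fine once the injective system has actually been produced.
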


\begin{proof}
We take a copy $\U_i(k)$ of $\U(k)$ parameterized by $i\in\N$.
Then $\bigcup_{i\in\N} \U_i(k)$ is a cover of $X$, but it is not locally finite. 
The identification between $\U_i(k)$ and $\U(k)$ define the surjection 
$P_k:\bigcup_{i\in\N} \U_i(k)\to \U(k)$.  
Then we can take an anti-\v{C}ech system $\{\U'(k)\}$ of $X$
and proper injective simplicial 
map $\psi'_{k,k+1}:\abs{\U'(k)}\to \abs{\U'(k+1)}$
satisfying 
$\U'(1)=\U_1(1)$, $\U_1(k)\subset \U'(k)\subset \bigcup_{i\in\N}\U_i(k)$ 
and the following commutative diagram:
\[
\xymatrix{
\U'(1)\ar[r]^{\psi'_{1,2}}\ar[d]^{p_1}
&\U'(2)\ar[r]^{\psi'_{2,3}}\ar[d]^{p_2}&\U'(3)\ar[r]^{\psi'_{3,4}}\ar[d]^{p_3}&\cdots\\
\U(1)\ar[r]^{\psi_{1,2}}&\U(2)\ar[r]^{\psi_{2,3}}&\U(3)\ar[r]^{\psi_{3,4}}&\cdots,
}
\]
where $p_k$ is a proper surjective simplicial map 
induced by $P_k$ of the restriction on $\abs{\U'(k)}$. 
For each $k$, we choose a section $e_k\colon \U(k)\to \U'(k)$ of $p_k$.
Then we have the following commutative diagram:
\[
\xymatrix{
\U'(1)\ar[r]^{\psi'_{1,2}}\ar[d]^{p_1}&\U'(2)\ar[r]^{\psi'_{2,3}}\ar[d]^{p_2}
&\U'(3)\ar[r]^{\psi'_{3,4}}\ar[d]^{p_3}&\cdots\\
\U(1)\ar[r]^{\psi_{1,2}}\ar[d]^{e_1}&\U(2)\ar[r]^{\psi_{2,3}}\ar[d]^{e_2}
&\U(3)\ar[r]^{\psi_{3,4}}\ar[d]^{e_3}&\cdots\\
\U'(1)\ar[r]^{e_2 \circ \psi_{1,2}\circ p_1}\ar[d]^{p_1}
&\U'(2)\ar[r]^{e_3 \circ \psi_{2,3}\circ p_2}\ar[d]^{p_2}
&\U'(3)\ar[r]^{e_4 \circ \psi_{3,4}\circ p_3}\ar[d]^{p_3}&\cdots\\
\U(1)\ar[r]^{\psi_{1,2}}&\U(2)\ar[r]^{\psi_{2,3}}&\U(3)\ar[r]^{\psi_{3,4}}&\cdots.
}
\]
Note that the inductive limits of
the second line and the forth line are $\mathcal X$.  We denote by 
$\mathcal{X}''$ and $\mathcal{X}'''$, respectively,
the inductive limits of the first line and the third line.
Since every $p_k\circ e_k$ are identity maps, 
$(\varinjlim e_k)^*\colon RK^*(\mathcal{X'''})\to RK^*(\mathcal{X})$ 
is surjective.
The Milnor exact sequence and its
functoriality imply the following commutative diagram:
\[
\xymatrix{
0\ar[r]
&\limone K^{*-1}(\abs{\U'(k)})
\ar[r] 
&RK^*(\mathcal X'')\ar[r]
&\varprojlim K^*(\abs{\U'(k)})\ar[r]
&0\\
0\ar[r]
&\limone K^{*-1}(\abs{\U'(k)}))\ar[r]
\ar[u]_{\limone(e_k\circ p_k)^{*-1}}
&RK^*(\mathcal X''')\ar[r]
\ar[u]_{(\varinjlim e_k\circ \varinjlim p_k)^*}
&\varprojlim K^*(\abs{\U'(k)})\ar[r]
\ar[u]_{\varprojlim (e_k\circ p_k)^*}
&0.}
\]
Since $e_k\circ p_k$ is contiguous to the identity map, 
$(\varinjlim e_k\circ \varinjlim p_k)^*$
is an isomorphism by the five lemma, and thus $(\varinjlim e_k)^*$ is injective.
Hence $(\varinjlim p_k)^*\colon RK^*(\mathcal{X})\to
 RK^*(\mathcal{X}'')$ is an isomorphism.
\end{proof}

\begin{proposition}\label{well-def}
$KX^*(X)$ is well-defined, that is, 
this is independent of the choice of the anti-\v{C}ech system 
$\{\U(k)\}_{k\in\N}$ and the coarsening maps $\{\psi_{k,l}\}_{k\le l}$.  
\end{proposition}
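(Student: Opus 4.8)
The plan is to show that any two choices of data — an anti-\v{C}ech system together with a system of coarsening maps — produce isomorphic $\sigma$-$C^*$-algebras $C_0(\mathcal{X})$ up to a natural identification on $RK_*$, so that $RK_*(C_0(\mathcal{X}))$ depends only on $X$. I would organize this into two independent reductions. First, fix a single anti-\v{C}ech system $\{\U(k)\}$ and show that the inductive limit $\mathcal{X}$, hence $RK^*(\mathcal{X})$, does not depend on the choice of coarsening maps $\psi_{k,k+1}$. The key point here is that any two coarsening maps $\U(k)\to \U(k+1)$ are contiguous as simplicial maps (both send a simplex spanned by sets $U_0,\dots,U_m$ with nonempty common intersection into a single simplex of $\abs{\U(k+1)}$, since the union of the $U_j$ has small diameter and is therefore contained in a common element of $\U(k+1)$), and contiguous simplicial maps induce the same map on $K^*$. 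Invoking the Milnor exact sequence \eqref{eq:Milnor} and the five lemma — exactly as in the proof of Lemma~\ref{lem:injective} — one concludes that the two resulting $\sigma$-$C^*$-algebras have canonically isomorphic $RK_*$.

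Second, I would show independence of the anti-\v{C}ech system. Given two anti-\v{C}ech systems $\{\U(k)\}$ and $\{\V(k)\}$, standard anti-\v{C}ech theory (\cite[Section 3]{MR1147350}) lets me interleave them: passing to subsequences, I can arrange that each $\U(k)$ refines $\V(k)$ and each $\V(k)$ refines $\U(k+1)$, obtaining coarsening-type simplicial maps $\abs{\U(k)}\to \abs{\V(k)}\to \abs{\U(k+1)}$ whose composites are coarsening maps for the respective systems. Passing to inductive limits of $C_0(-)$ and then to $RK_*$, and again using the Milnor exact sequence together with the contiguity observation from the first step, the two zig-zagging composites induce the identity on each $RK^*$, so $RK^*(\varinjlim\abs{\U(k)})\cong RK^*(\varinjlim\abs{\V(k)})$. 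Finally, passing to a subsequence of an anti-\v{C}ech system does not change $RK_*$ of the limit, since a cofinal subsystem has the same inductive limit of $C^*$-algebras; so the interleaving is legitimate.

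I would also invoke Lemma~\ref{lem:injective} wherever it is convenient to replace a coarsening map by an injective one, since injective simplicial maps give honest inclusions $C_0(\abs{\U(k)}) \hookleftarrow C_0(\abs{\U(k+1)})$ and make the $\limone$ bookkeeping cleaner; but this is not strictly necessary.

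The main obstacle is the bookkeeping: matching up two inductive systems indexed differently, and keeping track of which diagrams commute on the nose versus only up to contiguity (hence only after applying $K^*$). The conceptual content is entirely contained in two facts already available — any two coarsening maps are contiguous, and contiguous maps agree on $K^*$ — combined with the functoriality of the Milnor exact sequence \eqref{eq:Milnor} and the five lemma, precisely the mechanism already deployed in Lemma~\ref{lem:injective}. So the proof is a careful reprise of that argument applied to comparison maps between different systems rather than to the single system of Lemma~\ref{lem:injective}.
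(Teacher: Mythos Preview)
Your proposal is correct and follows essentially the same strategy as the paper: contiguity of coarsening maps, functoriality of the Milnor exact sequence~\eqref{eq:Milnor}, and the five lemma, deployed exactly as in Lemma~\ref{lem:injective}. The only organizational difference is that the paper, rather than interleaving two arbitrary anti-\v{C}ech systems directly, fixes a canonical reference system $\{\U_{Z,C}(k)\}$ built from a $C$-dense uniformly discrete subset $Z\subset X$ (whose coarsening maps $\iota_{k,l}$ are automatically injective) and compares every other system to this one; this slightly streamlines the bookkeeping you flag as the main obstacle, but the mechanism is identical to what you outline.
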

\begin{proof}
Let $\{\U(k)\}_{k\in\N}$ be an anti-\v{C}ech system and let 
$\{\psi_{k,l}\}_{k\le l}$ be coarsening maps. 
By Lemma~\ref{lem:injective}, we can assume that $\psi_{k,l}$ is
 injective. We denote by $\mathcal{X}$ the injective limit of $\{\U(k)\}$.

We compare $\{\U(k)\}$ with a special kind of an anti-\v{C}ech system 
of $X$ defined as follows.  We take a subset
$Z$ of $X$ and a constant $C > 0$ such that $\Pen(Z,C)=X$ and 
$d(x,y)> 1$ for any $x,y\in Z$ with $x\neq y$. 
The existence of such a subset follows from Zorn's lemma. 
(See \cite[Lemma 3.15]{MR1147350}.)
We call $Z$
a $C$-dense uniformly discrete subset of $X$.
For each $k\in \N$, put 
$\U_{Z,C}(k):=\left\{\Pen(z,(k+1)C) \subset X \left| z\in Z\right\}\right.$ 
which is a locally finite cover of $X$ since $X$ is proper.
For each $k\in \N$, diameter of any element of $\U_{Z,C}(k)$ is at most
 $2(k+1)C$ and the Lebesgue number of $\U_{Z,C}(k)$ is at least $kC$.
Hence $\{\U_{Z,C}(k)\}_{k\in\N}$ is an anti-\v{C}ech system of $X$.  
We have a proper simplicial map 
$\iota_{k,l}:\abs{\U_{Z,C}(k)}\to \abs{\U_{Z,C}(l)}$ induced by 
$\U_{Z,C}(k)\ni \Pen(z,(k+1)C)\mapsto \Pen(z,(l+1)C)\in\U_{Z,C}(l)$ 
for each $k\in \N$ and $l\in \N$ with $k\le l$. 
We denote the inductive limit by $\mathcal X_{Z,C}$. 
Also we denote the induced map $\abs{\U_{Z,C}(k)}\to \mathcal X_{Z,C}$ 
by $\iota_{k,\infty}$ for each $k\in\N$.
Note that $\iota_{k,l}$ is injective for any $k\in \N$ and $l\in \N\cup\{\infty\}$
with $k\le l$. 

We prove that $RK^*(\mathcal X)$ and $RK^*(\mathcal X_{Z,C})$
are canonically isomorphic. Then we have the desired conclusion. 
We take an increasing sequence $\{k_j\in\N\}$ such that for each $j$, 
the cover $\U(j)$ is an refinement of $\U_{Z,C}(k_j)$. Then for each
 $j\in\N$, we can choose an
coarsening map $f_j\colon \U(j)\rightarrow \U_{Z,C}(k_j)$ 
such that the following diagram: 
\[
\xymatrix{
\abs{\U(1)}\ar[r]^{\psi_{1,2}} \ar[d]^{f_1} &\abs{\U(2)} \ar[r]^{\psi_{2,3}}\ar[d]^{f_2} & 
 \cdots\\ 
\abs{\U_{Z,C}(k_1)}\ar[r]^{\iota_{k_1,k_2}} &\abs{\U_{Z,C}(k_2)} \ar[r]^{\iota_{k_2,k_3}}& 
 \cdots 
}
\]
is commutative without arranging any maps in both horizontal lines. 

Next, we take an increasing sequence $\{k'_j\in\N\}$ such that
for each $j$, $\U_{Z,C}(k_j)$ and $\U(k'_j)$ are respectively
 refinement of $\U(k'_j)$ and $\U(k'_{j+1})$.
Then we can choose coarsening maps 
$g_j\colon \U_{Z,C}(k_j) \to \U(k'_j)$ and 
$\psi'_{k'_j,k'_{j+1}}\colon \U(k'_j)\to \U(k'_{j+1})$ 
such that the following diagram: 
\[
\xymatrix{
\abs{\U_{Z,C}(k_1)}\ar[r]^{\iota_{k_1,k_2}} \ar[d]^{g_1} 
&\abs{\U_{Z,C}(k_2)} \ar[r]^{\iota_{k_2,k_3}}\ar[d]^{g_2} & 
 \cdots \\
\abs{\U(k'_1)}\ar[r]^{\psi'_{k'_1,k'_2}} &\abs{\U(k'_2)} \ar[r]^{\psi'_{k'_2,k'_3}}& 
 \cdots 
}
\]
is commutative.
We note that $\psi'_{k'_j,k'_{j+1}}$
is contiguous to $\psi_{k'_j,k'_{j+1}}$ and that 
$g_j\circ f_j$ is contiguous to $\psi_{j,k'_j}$.
We denote by $\mathcal X'$ the inductive limit of the second horizontal line.
We remark that there are no canonical map from $\mathcal X'$ to
 $\mathcal X$ in general.

Again, we take an increasing sequence $\{k''_j\in\N\}$ such that for
 each $j$, covers $\U(k'_j)$ and $\U_{Z,C}(k''_j)$ are respectively refinements
 of $\U_{Z,C}(k''_j)$ and $\U_{Z,C}(k''_{j+1})$. Then we can choose
 coarsening maps $h_j:\U(k'_j)\to \U_{Z,C}(k''_j)$
and $\iota'_{k''_j,k''_{j+1}}:\U_{Z,C}(k''_j)\to \U_{Z,C}(k''_{j+1})$ 
such that the following diagram: 
\[
\xymatrix{
\abs{\U(k'_1)}\ar[r]^{\psi'_{k'_1,k'_2}} \ar[d]^{h_1}
 &\abs{\U(k'_2)} \ar[r]^{\psi'_{k'_2,k'_3}}\ar[d]^{h_2} &  
 \cdots\\ 
\abs{\U_{Z,C}(k''_1)}\ar[r]^{\iota'_{k''_1,k''_2}}
 &\abs{\U_{Z,C}(k''_2)} \ar[r]^{\iota'_{k''_2,k''_3}}& 
 \cdots 
}
\]
is commutative.
We note that $\iota'_{k''_j,k''_{j+1}}$
is contiguous to $\iota_{k''_j,k''_{j+1}}$
and that $h_j\circ g_j$ is contiguous to $\iota_{k_j,k''_j}$.
We denote by $\mathcal X_{Z,C}'$ the inductive limit of the second
 horizontal line. We remark that there are no canonical map from
 $\mathcal X_{Z,C}'$ to $\mathcal X_{Z,C}$ in general.

Now we have a sequence of maps
\[
\xymatrix{
\mathcal X\ar[r]^{f_\infty}& 
\mathcal X_{Z,C}\ar[r]^{g_\infty}&
\mathcal X'\ar[r]^{h_\infty}& 
\mathcal X'_{Z,C}, 
}
\]
where we put $f_\infty:=\varinjlim f_j$,
$g_\infty:=\varinjlim g_j$ and
$h_\infty:=\varinjlim h_j$.
We prove that all maps induce isomorphisms of representable $K$-theory. 
Indeed we show that $g_\infty\circ f_\infty$ and $h_\infty\circ g_\infty$ 
induce isomorphisms of their representable $K$-theory.

We discuss only on the map $g_\infty\circ f_\infty$, 
since we can treat $h_\infty\circ g_\infty$ by the same way. 
We consider the following commutative diagram: 
\[
\xymatrix{
\abs{\U(k'_1)}\ar[r]^{\psi_{k'_1,k'_2}} &\abs{\U(k'_2)} \ar[r]^{\psi_{k'_2,k'_3}}& 
 \cdots \\
\abs{\U(1)}\ar[r]^{\psi_{1,2}} \ar[d]_{g_1\circ f_1} \ar[u]^{\psi_{1,k'_1}}
&\abs{\U(2)} \ar[r]^{\psi_{2,3}}\ar[d]_{g_2\circ f_2} \ar[u]^{\psi_{2,k'_2}}& 
 \cdots \\
\abs{\U(k'_1)}\ar[r]^{\psi'_{k'_1,k'_2}} &\abs{\U(k'_2)} \ar[r]^{\psi'_{k'_2,k'_3}}& 
 \cdots .
}
\]
The inductive limit of the first line is identified with that of
 the second line by the induced map
 $\varinjlim\psi_{j,k'_j}$. Thus
we also denote by $\mathcal X$ the inductive limit of the first line.
By Milnor exact sequence (\ref{eq:Milnor}) and its functoriality 
(see \cite[Theorem 5.8 (5)]{MR1050490} and also Proposition \ref{Phillips}), 
we have the following commutative diagram:
\[
\xymatrix{
0\ar[r]
&\limone K^{*-1}(\abs{\U(k'_j)})
\ar[r] \ar[d]_{\limone(\psi_{j,k'_j})^{*-1}}
&RK^*(\mathcal X)\ar[r]^{\varprojlim \psi_{k'_j,\infty}}
\ar[d]_{(\varinjlim\psi_{j,k'_j})^*}
&\varprojlim K^*(\abs{\U(k'_j)})\ar[r]\ar[d]^{\varprojlim (\psi_{j,k'_j})^*}
&0\\
0\ar[r]
&\limone K^{*-1}(\abs{\U(j)}))\ar[r]
&RK^*(\mathcal X)\ar[r]^{\varprojlim \psi_{j,\infty}}
&\varprojlim K^*(\abs{\U(j)})\ar[r]
&0\\
0\ar[r]
&\limone K^{*-1}(\abs{\U(k'_j)})
\ar[r]\ar[u]^{\limone(g_j\circ f_j)^{*-1}}
&RK^*(\mathcal X')\ar[r]^{\varprojlim \psi'_{k'_j,\infty}}
\ar[u]^{(g_\infty \circ f_\infty)^*}
&\varprojlim K^*(\abs{\U(k'_j)})\ar[r]\ar[u]_{\varprojlim (g_j\circ f_j)^*}
&0.
}
\]
Since $\varinjlim \psi_{j,k'_j}:\mathcal X\to \mathcal X$ is
 the identity map, $(\varinjlim \psi_{j,k'_j})^*$ is an isomorphism. 
Also $\varprojlim (\psi_{j,k'_j})^*$ is an isomorphism.
Thus so is $\limone(\psi_{j,k'_j})^{*-1}$ by the five lemma. 
Since $g_j\circ f_j$ is contiguous to $\psi_{j,k'_j}$, both
$\limone(g_j\circ f_j)^{*-1}$ and 
$\varprojlim (g_j\circ f_j)^*$ are isomorphisms, thus so is
$(g_\infty \circ f_\infty)^*$.
\end{proof}

By the definition and Milnor exact sequence~(\ref{eq:Milnor}),
$KX^*(-)$ satisfies axiom (\ref{axiom:Milnor}).

Suppose we have a proper metric space $Y$ and a coarse map
$f:X\to Y$.
We take an anti-\v{C}ech system $\{\V(k)\}_{k\in\N}$ of $Y$. 
We take an increasing sequence $\{k_j\in\N\}$ such that for each $j$, 
the covers $\U(j)$ and $\V(k_j)$ are respectively refinement of 
$\U(k_j)$ and $\V(k_{j+1})$. Then we can choose a map
$f_j:\U(j)\to \V(k_j)$ and $\phi_{k_j,k_{j+1}}:\V(k_j) \to \V(k_{j+1})$
such that $f(U)\subset f_j(U)$ for any $U\in \U(j)$ and 
the following diagram is commutative. 
\[
\xymatrix{
\abs{\U(1)}\ar[r]^{\psi_{1,2}} \ar[d]^{f_1} &\abs{\U(2)} \ar[r]^{\psi_{2,3}}\ar[d]^{f_2} & 
 \cdots\\ 
\abs{\V(k_1)}\ar[r]^{\phi_{k_1,k_2}} &\abs{\V(k_2)} \ar[r]^{\phi_{k_2,k_3}}& 
 \cdots .
}
\]
This induces a homomorphism $f^*\colon KX^*(Y)\to KX^*(X)$, which does not depend on
the choice of anti-\v{C}ech systems, the maps $f_j$ and
$\phi_{k_j,k_{j+1}}$. 
Let $g\colon X\to Y$ be another coarse map which
is close to $f$. Then we have $f^*=g^*$.
These facts can be proved by the similar arguments with the proof of 
Proposition~\ref{well-def}, so we omit the details.  

Let $Z$ be a $C$-dense uniformly discrete subset of $X$.
Then $KX^*(Z)$ coincides with the coarse
$K$-theory of $X$ defined by
Emerson-Mayer\cite{MR2225040}.
Since $Z$ and $X$ are coarsely equivalent, we
have $KX^*(Z)\cong KX^*(X)$.
Hence Emerson-Meyer's definition and ours are compatible.  

\begin{lemma}
\label{lem:product-vanishing}
 The coarse $K$-theory satisfies axiom~$(\ref{axiom:product-with-half-line})$.
\end{lemma}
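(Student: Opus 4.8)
The plan is to compute $KX^*(X\times\N)$ from a conveniently chosen anti-\v{C}ech system of $X\times\N$ --- which is permissible by Proposition~\ref{well-def} --- arranging that the associated model $\sigma$-$C^*$-algebra is a cone, hence has vanishing representable $K$-theory. Note that one cannot simply apply Lemma~\ref{lem:flasque} to the shift $(x,n)\mapsto(x,n+1)$ of $X\times\N$: the proof of Lemma~\ref{lem:flasque} already invokes axiom~(\ref{axiom:product-with-half-line}), which is exactly the assertion being proved, so the argument here must be direct.

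First I would fix an anti-\v{C}ech system $\{\U(k)\}$ of $X$ and an anti-\v{C}ech system $\{\mathcal{I}(k)\}$ of $\N$ whose nerves $\abs{\mathcal{I}(k)}$ are properly homotopy equivalent to the ray $[0,\infty)$; for instance let $\mathcal{I}(k)$ consist of the intervals $[(j-1)R_k,(j+1)R_k]\cap\N$, $j\ge 0$. After passing to a subsystem along which the scales grow quickly enough, the family $\mathcal{W}(k):=\{\,U\times I : U\in\U(k),\ I\in\mathcal{I}(k)\,\}$ is an anti-\v{C}ech system of $X\times\N$ with the $l_1$-metric: it is locally finite, the $l_1$-diameter of $U\times I$ equals $\diam U+\diam I$, and if $S\subset X\times\N$ has small $l_1$-diameter then both coordinate projections of $S$ are small, whence $S\subset U\times I$ for suitable $U$ and $I$. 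By Proposition~\ref{well-def} we may use $\{\mathcal{W}(k)\}$ to compute $KX^*(X\times\N)$. Since $U\times I$ meets $U'\times I'$ precisely when $U\cap U'\ne\emptyset$ and $I\cap I'\ne\emptyset$, the nerve $\abs{\mathcal{W}(k)}$ is the nerve of a product cover, so the two coordinate projections assemble to a proper map $\abs{\mathcal{W}(k)}\to\abs{\U(k)}\times\abs{\mathcal{I}(k)}$, and this map is a proper homotopy equivalence; combined with $\abs{\mathcal{I}(k)}\simeq_{\mathrm{prop}}[0,\infty)$ this gives $\abs{\mathcal{W}(k)}\simeq_{\mathrm{prop}}\abs{\U(k)}\times[0,\infty)$.

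Granting this, proper homotopy invariance of $K$-theory on LCSH yields
\[
 K_*\bigl(C_0(\abs{\mathcal{W}(k)})\bigr)\cong K_*\bigl(C_0(\abs{\U(k)})\otimes C_0([0,\infty))\bigr)=0,
\]
since $C_0(\abs{\U(k)})\otimes C_0([0,\infty))$ is the cone over $C_0(\abs{\U(k)})$. Feeding the vanishing of all the groups $K_*(C_0(\abs{\mathcal{W}(k)}))$ into the Milnor exact sequence~(\ref{eq:Milnor}) forces both the $\varprojlim$-term and the $\limone$-term to vanish, so $KX^*(X\times\N)=0$, which is axiom~(\ref{axiom:product-with-half-line}). (Alternatively, one may take product coarsening maps for $\{\mathcal{W}(k)\}$ and identify the inductive limit $\mathcal{W}$ of the nerves with $\mathcal{X}\times[0,\infty)$ up to proper homotopy, so that $C_0(\mathcal{W})$ is, up to $K$-theory, the cone over $C_0(\mathcal{X})$.) The point demanding genuine care is the claim that the nerve of the product cover is \emph{properly} homotopy equivalent to the product of the two nerves: an ordinary homotopy equivalence is immediate from the projections, but properness --- which is what one needs before passing to $K_*(C_0(-))$ --- has to come from a compatible, locally finite choice of diagonal-type sections over finite subcomplexes. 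Checking the anti-\v{C}ech axioms for $\{\mathcal{W}(k)\}$, verifying $\abs{\mathcal{I}(k)}\simeq_{\mathrm{prop}}[0,\infty)$, and recognizing the relevant $C_0$-algebra as a tensor product and then a cone are routine.
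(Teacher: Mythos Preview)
Your approach is correct (modulo the step you already flag) and takes a genuinely different route from the paper. The paper also builds a product-type anti-\v{C}ech system, namely $\V(k)=\{U\times[n,n+k]:U\in\U(k),\,n\in\N\}$, but instead of proving that each individual group $K^*(\abs{\V(k)})$ vanishes, it argues that the \emph{coarsening maps} are zero on $K$-theory. For each $s\ge 0$ it defines a shifted simplicial map $\phi_{k,s}\colon\abs{\V(k)}\to\abs{\V(k+1)}$ (sliding those intervals with $n\le s$ up by one), observes that $\phi_{k,s}$ is contiguous to $\phi_{k,s+1}$, and concatenates the resulting straight-line homotopies into a single map $H_k\colon\abs{\V(k)}\times\R_{\ge 0}\to\abs{\V(k+1)}$ with $H_k(\,\cdot\,,0)=\phi_{k,0}$; since $K^*(\abs{\V(k)}\times\R_{\ge 0})=0$, this factorization forces $\phi_{k,0}^*=0$, so both $\varprojlim$ and $\limone$ of the system vanish. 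Your route is more conceptual---each nerve is, up to proper homotopy, a product with a ray, hence $K$-acyclic termwise, which makes the Milnor sequence trivially collapse---but it leans on the nerve-of-product-cover versus product-of-nerves comparison you rightly isolate as the delicate point. The paper's route sidesteps that Eilenberg--Zilber-type statement entirely by working only with contiguity of explicit simplicial maps, at the price of a more hands-on construction.
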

\begin{proof}
Let $\{\U(k)\}_{k\in \N}$ be an anti-\v{C}ech system of $X$.
Let $\psi_k\colon \U(k)\to \U(k+1)$ denote a coarsening map. 
Set $\mathcal{V}(k) := \{U\times [n,n+k]: U\in \U(k), n\in \N\}$.
Then $\{\V(k)\}$ forms an anti-\v{C}ech system of 
$X\times \N$. For $k\in \N$ and $s\in \N\cup \{0\}$, we define a simplicial map 
$\phi_{k,s}\colon \abs{\V(k)} \to \abs{\V(k+1)}$ by 
\[
 \phi_{k,s}(U\times[n,n+k]) := \begin{cases}
		  \psi_k(U)\times[n,n+k+1] & \text{ if } n> s,\\
		  \psi_k(U)\times[n+1,n+k+2] & \text{ if } n\leq s
		 \end{cases}
\]
where $U\in \U(k)$. Since $\phi_{k,s}$ is contiguous to $\phi_{k,s+1}$,
we have a proper homotopy 
\[
 h_{k,s}(t)\colon \abs{\V(k)}\to \abs{\V(k+1)}
\] between geometric
 realization of $\phi_{k,s}$ and $\phi_{k,s+1}$ where $t\in [s,s+1]$. 
Then we define a continuous proper map 
$H_k\colon \abs{\V(k)}\times \R_{\geq 0} \to \abs{\V(k+1)}$ by 
$H_k(x,t) = h_{k,s}(t)(x)$ where $s$ is an integer satisfying 
$t\in [s,s+1]$. We remark that the restriction $H_k(-,0)$ is a
 coarsening map $\phi_{k,0}$. Thus the induced map
$\phi_{k,0}^*\colon K^*(\abs{V(k+1)})\to K^*(\abs{V(k)})$ factors through 
$K^*(\abs{V(k)}\times \R_{\geq 0}) = 0$, so 
$\varprojlim K^*(\abs{\V(k)}) =\limone K^*(\abs{\V(k)})=0$. 
Therefore $KX^*(X\times \R_{\geq 0}) = 0$.
\end{proof}

We need the following lemma to show that $KX^*(-)$ satisfies 
axiom~(\ref{axiom:Mayer-Vietoris}).
\begin{lemma}\label{pullback}
Let the following be a pullback diagram of $\sigma$-$C^\ast$-algebras:
\[
\xymatrix{
&P_k \ar[r]^{g_{1,k}} \ar[d]_{g_{2,k}}  &A_{1,k} \ar[d]^{f_{1,k}}  \\
&A_{2,k} \ar[r]^{f_{2,k}}  &B_k, 
}
\]
where we suppose that $f_{1,k}$ and $f_{2,k}$ are surjective for any $k\in \N$.
Let $\Pi_{k}:P_{k+1}\to P_k$, $\pi_{1,k}:A_{1,k+1}\to A_{1,k}$, $\pi_{2,k}:A_{2,k+1}\to A_{2,k}$ 
and $\pi_{k}:B_{k+1}\to B_k$ be $*$-homomorphisms. 
Suppose that the following diagram is commutative for every $k\in\N$
\[
\xymatrix{
P_{k+1} \ar[dr]^{\Pi_k}\ar[rr]^{g_{1,k+1}}\ar[dd]^{g_{2,k+1}}&&A_{1,k+1} \ar[dr]^{\pi_{1,k}}\ar'[d][dd]^{f_{1,k+1}}&
\\
& P_k \ar[rr]^(0.32){g_{1,k}}\ar[dd]^(0.68){g_{2,k}}& & A_{1,k} \ar[dd]^{f_1,k}
\\
A_{2,k+1} \ar'[r][rr]^{f_{2,k+1}}\ar[dr]^{\pi_{2,k}}& & B_{k+1} \ar[dr]^{\pi_k}&
\\
& A_{2,k} \ar[rr]^{f_{2,k}}& & B_k. 
}
\]
Then we have the following Mayer-Vietoris exact sequence:
\[
\xymatrix{
\ar[r]&RK_{*+1}(\varprojlim B_k) \ar[r]&RK_{*}(\varprojlim P_k) \ar[r]&
RK_*(\varprojlim A_{1,k})\oplus RK_*(\varprojlim A_{2,k}) \ar[r]&.
}
\]
\end{lemma}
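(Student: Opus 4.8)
The plan is to establish the Mayer--Vietoris sequence first at each level $k$, by the usual pullback argument, and then to pass to the inverse limit with the help of the Milnor exact sequence for representable $K$-theory. Fix $k$. Since $f_{2,k}$ is surjective, the map $g_{1,k}\colon P_k\to A_{1,k}$ is surjective and $g_{2,k}$ restricts to an isomorphism of $J_k:=\ker g_{1,k}$ onto $\ker f_{2,k}$; identifying $J_k$ with $\ker f_{2,k}$ accordingly, we obtain two short exact sequences of $\sigma$-$C^\ast$-algebras,
\begin{align*}
0\to J_k\to P_k\xrightarrow{g_{1,k}}A_{1,k}\to 0,\qquad
0\to J_k\to A_{2,k}\xrightarrow{f_{2,k}}B_k\to 0,
\end{align*}
together with a morphism from the first to the second which is the identity on $J_k$ and is given by $g_{2,k}$ and $f_{1,k}$ on the remaining terms (the right-hand square commutes by the defining identity $f_{1,k}\circ g_{1,k}=f_{2,k}\circ g_{2,k}$ of the pullback). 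Applying the six-term exact sequence in $RK$-theory of $\sigma$-$C^\ast$-algebras (\cite{MR1050490}) to both short exact sequences and splicing them along the common terms $RK_*(J_k)$ produces the level-$k$ Mayer--Vietoris sequence
\begin{align*}
\cdots\to RK_{*+1}(B_k)&\to RK_*(P_k)\\
&\xrightarrow{((g_{1,k})_*,\,(g_{2,k})_*)}RK_*(A_{1,k})\oplus RK_*(A_{2,k})\\
&\xrightarrow{(f_{1,k})_*-(f_{2,k})_*}RK_*(B_k)\to\cdots.
\end{align*}
Commutativity of the given three-dimensional diagram ensures that, as $k$ varies, these sequences form an inverse system of exact sequences, compatible with $\Pi_k$, $\pi_{1,k}$, $\pi_{2,k}$ and $\pi_k$.

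Next I would pass to the inverse limit. Splitting each level-$k$ six-term sequence into short exact sequences of towers and applying the standard $\varprojlim$--$\limone$ six-term exact sequence to each piece, one reassembles a single long exact sequence whose entries are $\varprojlim RK_*$ and $\limone RK_*$ of the four towers $\{P_k\}$, $\{A_{1,k}\}$, $\{A_{2,k}\}$ and $\{B_k\}$. By Phillips' Milnor exact sequence (\cite[Theorem 5.8]{MR1050490}; see Appendix~\ref{appendix:Milnor}), each of these towers fits into a natural short exact sequence
\begin{align*}
0\to\limone RK_{*+1}(\,\cdot\,)\to RK_*\bigl(\varprojlim(\,\cdot\,)\bigr)\to\varprojlim RK_*(\,\cdot\,)\to 0,
\end{align*}
naturality being with respect to the $\ast$-homomorphisms $g_{i,k}$ and $f_{i,k}$. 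Substituting these four Milnor sequences into the assembled long exact sequence --- equivalently, identifying the derived inverse limit of each tower of $K$-groups with the $RK_*$-group of the corresponding inverse-limit $\sigma$-$C^\ast$-algebra --- and running a diagram chase with the five lemma collapses it to the asserted Mayer--Vietoris sequence for $RK_*(\varprojlim P_k)$, $RK_*(\varprojlim A_{1,k})\oplus RK_*(\varprojlim A_{2,k})$ and $RK_*(\varprojlim B_k)$, with connecting maps induced by $g_{1,k}$ and $g_{2,k}$.

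The one genuine obstacle, and the reason the Milnor exact sequence is indispensable, is the failure of $\varprojlim$ to be exact: in general $\varprojlim A_{2,k}\to\varprojlim B_k$ is not surjective, so $\varprojlim P_k$ need not be the honest pullback of $\varprojlim A_{1,k}\to\varprojlim B_k\leftarrow\varprojlim A_{2,k}$, and one cannot merely take $\varprojlim$ of the level-$k$ Mayer--Vietoris sequences. The $\limone$-terms supplied by Phillips' Milnor sequence are exactly what measures and corrects this discrepancy. Everything else --- the level-$k$ pullback Mayer--Vietoris sequence, the splitting and reassembling of towers of exact sequences, and the concluding diagram chase --- is routine homological algebra, although one should check with some care that all the identifications are natural in the $g_{i,k}$ and $f_{i,k}$, so that the assembled sequence really carries the stated maps.
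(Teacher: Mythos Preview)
Your level-$k$ Mayer--Vietoris argument via the common ideal $J_k=\ker f_{2,k}$ is correct and standard. The gap is in the passage to the inverse limit. When you split the six-term sequences into short exact sequences of towers and apply the $\varprojlim$--$\limone$ six-term sequence, the resulting pieces involve $\varprojlim$ and $\limone$ of the \emph{image} towers $I^i_k=\mathrm{im}\bigl(RK_*(P_k)\to RK_*(A_{1,k})\oplus RK_*(A_{2,k})\bigr)$ and the like, not just of the four original towers. These intermediate terms do not cancel, so one does not obtain a single long exact sequence whose entries are only $\varprojlim RK_*$ and $\limone RK_*$ of $\{P_k\},\{A_{1,k}\},\{A_{2,k}\},\{B_k\}$. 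Consequently there is nothing into which the four Milnor extensions can be ``substituted'': the Milnor sequence expresses $RK_*(\varprojlim X_k)$ as an \emph{extension} of $\varprojlim RK_*(X_k)$ by $\limone RK_{*+1}(X_k)$, and an extension cannot simply be inserted in place of two separate terms of an exact sequence. The concluding five-lemma/diagram-chase is therefore not well posed.

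The paper avoids this reassembly problem by staying at the $C^*$-algebra level. It forms the double mapping-path algebra $C_k\subset C_0([0,1),A_{1,k})\oplus C_0([0,1),A_{2,k})$ and the comparison map $\psi_k\colon C_k\to SB_k$. For each finite $k$ the map $(\psi_k)_*$ is an isomorphism (this is Blackadar's proof of Mayer--Vietoris), and the Milnor sequence together with the five lemma --- applied to a \emph{single} morphism of towers, which is a legitimate use --- yields that $(\psi_\infty)_*\colon RK_*(C_\infty)\to RK_*(SB_\infty)$ is an isomorphism. The Mayer--Vietoris sequence then comes directly from the six-term sequence of the honest short exact sequence $0\to SA_{1,\infty}\oplus SA_{2,\infty}\to C_\infty\to P_\infty\to 0$ of $\sigma$-$C^*$-algebras, with $RK_*(C_\infty)$ replaced by $RK_{*+1}(B_\infty)$ via $(\psi_\infty)_*$. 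In short, the paper uses Milnor only once, to transport an isomorphism across the limit; the exact sequence itself is produced from an exact sequence of algebras at level $\infty$, not from a limit of exact sequences of $K$-groups.
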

\begin{proof}
We refer to the proof of \cite[Theorem 21.2.2]{MR1656031}.

By taking projective limit, we have the following commutative diagram 
\[
\xymatrix{
&P_\infty:=\varprojlim P_k 
\ar[r]^{g_{2,\infty}}_{:=\varprojlim g_{2,k}} 
\ar[d]_{g_{1,\infty}}^{:=\varprojlim g_{1,k}}  
&A_{1,\infty}:=\varprojlim A_{1,k} 
\ar[d]_{f_{1,\infty}}^{:=\varprojlim f_{1,k}}  \\
&A_{2,\infty}:=\varprojlim A_{2,k} 
\ar[r]^{f_{2,\infty}}_{:=\varprojlim f_{2,k}} 
&B_\infty:=\varprojlim B_k, 
}
\]
which is not necessarily a pull-back diagram. 
Put for each $k\in\N\cup\{\infty\}$, 
\begin{align*}
C_k:=\left\{(h_{1,k},h_{2,k})\in C_0([0,1))\otimes A_{1,k}\oplus  C_0([0,1))\otimes A_{2,k} 
\left| \ f_{1,k}(h_{1,k}(0))=f_{2,k}(h_{2,k}(0))\right.\right\}.
\end{align*}
For a $\sigma$-$C^*$-algebra $A$, we denote by $SA$ the suspension 
$C_0(0,1)\otimes A$. For each $k\in\N\cup\{\infty\}$, 
there is a canonical map $\psi_k:C_k\to SB_k$ 
defined by 
\[
[\psi_k(h_{1,k},h_{2,k})](t):=\left\{
  \begin{array}{cc}
    f_{1,k}(h_{1,k}(1-2t))   &  \text{for }t\le \frac{1}{2}  \\
    f_{2,k}(h_{2,k}(2t-1))   &  \text{for }t\ge \frac{1}{2}  \\
  \end{array}
\right..
\]
Then we have the following commutative diagram where two horizontal
 sequences are both exact, 
\[
\xymatrix{
&0  \ar[r]  
&\limone RK_{*+1}(C_k) \ar[d]^{\limone(\psi_k)_{*+1}} \ar[r]  
&RK_*(\varprojlim C_k) \ar[d]^{(\psi_\infty)_*} \ar[r] 
&\varprojlim  RK_{*}(C_k) \ar[d]^{\varprojlim (\psi_k)_*} \ar[r]
&0\\
&0 \ar[r]  
&\limone RK_{*+1}(SB_k) \ar[r]  
&RK_*(S\varprojlim B_k) \ar[r] 
&\varprojlim RK_{*}(SB_k) \ar[r]
&0.
}
\]
Since $(\psi_k)_*$ is an isomorphism for each $k$, 
so is a map $(\psi_\infty)_*$.

We have the following 
\[
\xymatrix{
&0\ar[r]& SA_{1,k+1}\oplus SA_{2,k+1}\ar[r] \ar[d]
&C_{k+1} \ar[r] \ar[d] &P_{k+1}\ar[r] \ar[d]& 0\\
&0\ar[r]& SA_{1,k}\oplus SA_{2,k}\ar[r] 
&C_{k} \ar[r]  &P_{k}\ar[r] & 0.
}
\]
Here each horizontal sequence is exact. 
(See \cite[Section 2]{MR1388297}.)
Since the left vertical map is surjective by the given condition, 
we have an exact sequence
\[
0\to SA_{1,\infty}\oplus SA_{2,\infty}
\to C_\infty \to P_\infty\to 0. 
\] 
We define 
$\kappa_\infty: SA_{1,\infty}\oplus SA_{2,\infty}\to SB_\infty$
as the restriction of $\psi_\infty$. 
Then we have the following exact sequence 
\[
\xymatrix{
\ar[r]&RK_{*+1}(P_{\infty}) \ar[r]^{\partial\qquad} 
&RK_*(SA_{1,\infty}\oplus SA_{2,\infty})
\ar[rd]_{(\kappa_\infty)_*} \ar[r] \ar[d]_\cong&
RK_*(C_{\infty}) \ar[r] \ar[d]^\cong_{(\psi_\infty)_*}& \\
&&RK_*(SA_{1,\infty})\oplus RK_*(SA_{2,\infty}) 
&RK_*(SB_\infty) &
}
\]
This gives the desired exact sequence
by $RK_{*+1}(-) \cong RK_*(S -)$. 
\end{proof}

\begin{proof}[Proof of Proposition \ref{prop:coarse_cohomology} for $KX^*(-)$]
We prove that $KX^*(-)$ satisfies axiom (\ref{axiom:Mayer-Vietoris}).
Let $X$ be a space with a proper metric $d$. 
We take a $C$-dense uniformly discrete subset $Z$ of $X$. 
We denote $\U_{Z,C}(k)$ in Proof of Claim \ref{well-def} 
by $\U(k)$ in this proof. 
It is straightforward to show the following claim.
\begin{claim}
\label{subspace}
Let $L\subset X$ be a closed subset. 
By restriction, we have an anti-\v{C}ech system 
$\{L\cap\U(k):=\left\{L\cap U \left| U\in \U(k)\right\}\right.\}_{k\in\N}$ of $L$. 
Also we consider the subcomplex $\abs{\U(k)^L}$ of $\abs{\U(k)}$ completely spanned by 
$\U(k)^L:=\left\{U\in \U(k) \left| L\cap U\neq\emptyset \right\}\right.$
for each $k\in \N$.
Then we have an injective proper simplicial map $|L\cap\U(k)|\hookrightarrow \abs{\U(k)^L}$
induced by $L\cap\U(k)\ni L\cap U\mapsto U\in\U(k)^L$.
This induces an isomorphism from $\varprojlim C_0(\abs{\U(k)^L})$ to
$\varprojlim C_0(|L\cap \U(k)|)$
as $\sigma$-$C^\ast$-algebras
and thus induces an isomorphism from 
$RK_*(\varprojlim C_0(\abs{\U(k)^L}))$ to 
$RK_*(\varprojlim C_0(|L\cap \U(k)|))$ 
\end{claim}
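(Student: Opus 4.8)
The plan is to check the anti-\v{C}ech property by restriction, identify $|L\cap\U(k)|$ with an honest subcomplex of $\abs{\U(k)^L}$, and then manufacture coarsening maps in the reverse direction so that the two directed systems interleave and hence have the same direct limit.

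First I would verify that $\{L\cap\U(k)\}$ is an anti-\v{C}ech system of $L$ in the subspace metric, exactly in parallel with the discussion for $\{\U(k)\}$ itself (and with the same cofinal reindexing if one wants the literal inequalities of Definition~\ref{def:anti-Cech-sys}): each $L\cap U$ has diameter at most $\diam U$, while any subset of $L$ of diameter at most the Lebesgue number of $\U(k+1)$ is a subset of $X$ of that diameter, hence lies in some $U\in\U(k+1)$ and therefore in $L\cap U$. Reading $L\cap\U(k)$ as the cover of $L$ indexed by $\U(k)^L=\{U\in\U(k):U\cap L\neq\emptyset\}$ — with $U$ labelling $L\cap U$ — the nerve $|L\cap\U(k)|$ has vertex set $\U(k)^L$, and a finite $\sigma\subseteq\U(k)^L$ spans a simplex of $|L\cap\U(k)|$ precisely when $L\cap\bigcap\sigma\neq\emptyset$, whereas it spans a simplex of $\abs{\U(k)^L}$ precisely when $\bigcap\sigma\neq\emptyset$. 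Thus $|L\cap\U(k)|$ is literally a subcomplex of $\abs{\U(k)^L}$, the inclusion $j_k$ is the injective proper simplicial map of the statement (proper because $\abs{\U(k)}$, hence both subcomplexes, is locally finite since $X$ is proper, and a subcomplex is closed), and the coarsening maps restrict compatibly to the two systems and, being restrictions of the maps $\psi_{k,l}$, commute with the inclusions $j_k$, since $U\subset\psi_{k,l}(U)$ preserves nonemptiness of intersections with $L$.

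The one substantive step is to produce, for each $k$, an index $l=l(k)>k$ with $\psi_{k,l}\!\left(\abs{\U(k)^L}\right)\subseteq|L\cap\U(l)|$. With $\U(k)=\{\Pen(z,(k+1)C):z\in Z\}$, a simplex of $\abs{\U(k)^L}$ is a finite family $\{\Pen(z_i,(k+1)C)\}_i$ with a common point $q\in X$ and with each $\Pen(z_i,(k+1)C)$ meeting $L$; then $d(z_i,z_j)\le 2(k+1)C$, and for any $p\in L\cap\Pen(z_0,(k+1)C)$ one gets $d(p,z_i)\le 3(k+1)C$ for all $i$, so once $(l+1)C\ge 3(k+1)C$ the point $p$ witnesses that $\{\Pen(z_i,(l+1)C)\}_i$ spans a simplex of $|L\cap\U(l)|$. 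Hence $\psi_{k,l}$ corestricts to a proper simplicial map $r_k\colon\abs{\U(k)^L}\to|L\cap\U(l)|$ with $j_l\circ r_k$ the coarsening $\abs{\U(k)^L}\to\abs{\U(l)^L}$ and $r_k\circ j_k$ the coarsening $|L\cap\U(k)|\to|L\cap\U(l)|$.

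Finally, interleaving $|L\cap\U(k)|\xrightarrow{j_k}\abs{\U(k)^L}\xrightarrow{r_k}|L\cap\U(l(k))|\xrightarrow{j_{l(k)}}\abs{\U(l(k))^L}\to\cdots$ produces a single directed system whose two cofinal subsystems are cofinal subsequences of $\{|L\cap\U(k)|\}$ and of $\{\abs{\U(k)^L}\}$; hence the canonical map $\varinjlim_k|L\cap\U(k)|\to\varinjlim_k\abs{\U(k)^L}$ induced by the $j_k$ is a homeomorphism. Under the defining identification $C_0(\varinjlim)=\varprojlim C_0$ this is exactly the asserted isomorphism $\varprojlim_k C_0(\abs{\U(k)^L})\xrightarrow{\cong}\varprojlim_k C_0(|L\cap\U(k)|)$ of $\sigma$-$C^*$-algebras, and applying $RK_*$ gives the isomorphism on representable $K$-theory. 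Equivalently, one may stay on the inverse systems: each $j_k^*$ is surjective with kernel $I_k=C_0(\abs{\U(k)^L}\setminus|L\cap\U(k)|)$, and the inclusion just established forces the structure map $I_l\to I_k$ to vanish whenever $(l+1)C\ge 3(k+1)C$, so $\{I_k\}$ is pro-zero and $\varprojlim I_k=\limone I_k=0$. I expect the only point needing care to be the index bookkeeping — the cofinal reindexing of the covers and the check that $j_k$ is well defined once $L\cap\U(k)$ is read as an indexed cover — since the geometry reduces to the one-line estimate $d(p,z_i)\le 3(k+1)C$.
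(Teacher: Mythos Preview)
Your argument is correct and is precisely the kind of interleaving the paper has in mind: the paper itself offers no proof beyond ``It is straightforward to show the following claim,'' and the same cofinal-interleaving device you use (produce $l=l(k)$ with $\psi_{k,l}\bigl(\abs{\U(k)^L}\bigr)\subset|L\cap\U(l)|$ via the estimate $d(p,z_i)\le 3(k+1)C$) is exactly what the paper spells out a few lines later when comparing $\abs{\U(k)^{A\cap B}}$ with $\abs{\U(k)^A}\cap\abs{\U(k)^B}$. Your remark that $L\cap\U(k)$ must be read as an \emph{indexed} cover (by $\U(k)^L$, i.e.\ by $z\in Z$ with $\Pen(z,(k+1)C)\cap L\neq\emptyset$) is the one point worth making explicit, since otherwise the map $L\cap U\mapsto U$ need not be well defined.
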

Note that $KX^*(L)=RK_*(\varprojlim C_0(|L\cap \U(k)|))$ in the above.

Now we consider an $\omega$-excisive decomposition $X=A\cup B$. 
Then $\abs{\U(k)}=\abs{\U(k)^A}\cup \abs{\U(k)^B}$ is an excisive decomposition 
as simplicial complexes.
Hence we have the following projective system of pull-back diagrams of $C^\ast$-algebras:
\[
\xymatrix{C_0(\abs{\U(k)}) \ar[r] \ar[d]  &C_0(\abs{\U(k)^B}) \ar[d]  \\
C_0(\abs{\U(k)^A}) \ar[r]  &C_0(\abs{\U(k)^A}\cap \abs{\U(k)^B}).}
\]
Since $\abs{\U(k)^L}\to \abs{\U(k+1)^L}$ is injective 
for any closed subspace $L\subset X$, 
Lemma \ref{pullback} implies the following exact sequence: 
\begin{align*}
\cdots \to 
RK_*(\varprojlim C_0(\abs{\U(k)^A}))\oplus RK_*(\varprojlim C_0(\abs{\U(k)^B})) 
\to RK_*(\varprojlim C_0(\abs{\U(k)}))\\
\to RK_{*-1}(\varprojlim C_0(\abs{\U(k)^A}\cap \abs{\U(k)^B}))\to \cdots .
\end{align*}
It follows from Claim \ref{subspace} that 
$KX^*(A)$, $KX^*(B)$ and $KX^*(X) $
are naturally isomorphic to 
$RK_*(\varprojlim C_0(\abs{\U(k)^A}))$, 
$RK_*(\varprojlim C_0(\abs{\U(k)^B}))$ and 
$RK_*(\varprojlim C_0(\abs{\U(k)}))$, respectively.  

Now we prove that 
$RK_*(\varprojlim C_0(\abs{\U(k)^A}\cap \abs{\U(k)^B}))$
is naturally isomorphic to $KX^*(A\cap B)$.
We have a natural injection 
$\abs{\U(k)^{A\cap B}}\hookrightarrow \abs{\U(k)^A}\cap \abs{\U(k)^B}$. 
Also we have $\abs{\U(k)^A}\cap \abs{\U(k)^B}\hookrightarrow\abs{\U(k)^{\Pen(A,2(k+1)C)\cap \Pen(B,2(k+1)C)}}$.
Since $X = A\cup B$ is an $\omega$-excisive decomposition, there
 exists $k'\in \N$ such that 
$\Pen(A,2(k+1)C)\cap \Pen(B,2(k+1)C)\subset \Pen(A\cap B,2(k'+1)C)$. 
Hence we have  
$\abs{\U(k)^{\Pen(A,2(k+1)C)\cap \Pen(B,2(k+1)C)}}\hookrightarrow\abs{\U(k)^{\Pen(A\cap B,2(k'+1)C)}}$.
Then we have $\abs{\U(k)^{\Pen(A\cap B,2(k'+1)C)}}\hookrightarrow\abs{\U((k+2k'+3)C)^{A\cap B}}$. 
By taking an increasing sequence $\{k_j\in\N\}_j$, we have the following 
commutative diagram:
\[
\xymatrix{
&{\abs{\U(k_1)^{A\cap B}}} \ar[d] \ar[r]  
&{\abs{\U(k_2)^{A\cap B}}} \ar[d] \ar[r]  
&{\abs{\U(k_3)^{A\cap B}}} \ar[d] \ar[r] 
&\cdots\\
&\ar[ur] {\abs{\U(k_1)^A}\cap \abs{\U(k_1)^B}} \ar[r]  
&\ar[ur] {\abs{\U(k_2)^A}\cap \abs{\U(k_2)^B}} \ar[r]  
&\ar[ur] {\abs{\U(k_3)^A}\cap \abs{\U(k_3)^B}} \ar[r] 
&\cdots.}
\] 
This implies that 
$\varprojlim C_0(\abs{\U(k_j)^{A\cap B}})\cong 
\varprojlim C_0(\abs{\U(k_j)^A}\cap \abs{\U(k_j)^B})$.
By combining Claim \ref{subspace}, we have that 
$RK_*(\varprojlim C_0(\abs{\U(k)^A}\cap \abs{\U(k)^B}))$
is naturally isomorphic to $KX^*(A\cap B)$.
Hence we have the desired exact sequence: 
\[
\cdots \to KX^*(A)\oplus KX^*(B) \to KX^*(X)
\to KX^{*+1}(A\cap B)\to \cdots .
\]
We can easily confirm its functoriality.


Now we show that $KX^*(-)$ satisfies axiom (\ref{axiom:character-map}). We
have a proper continuous map $X\to \abs{\U(1)}$ by using partition of
unity (see \cite[Section 3]{MR1388312}).  Then we have a $*$-homomorphism
$\varprojlim C_0(\abs{\U(k)})\to C_0(X)$.  This induces 
the character map $c:KX^*(X)\to K^*(X)$. It follows from 
Proof of the axiom (ii) that
the character maps preserve Mayer-Vietoris sequences for
$\omega$-excisive decomposition. Also the character map for
a uniformly contractible proper metric space with bounded geometry
is an isomorphism by \cite[Theorem 4.8]{MR2225040}.
We can confirm that this does not depend on the choice of partition of
unity and so on.

Finally we show that $KX^*(-)$ satisfies axiom (\ref{axiom:transgression-map}).
We consider a proper continuous map $\epsilon :X\to \abs{\U(1)}$ in the above.
Then we can give a proper coarse structure on $\abs{\U(k)}$ such that 
$\iota_{i,k}\circ\epsilon:X\to \abs{\U(k)}$ is a coarse equivalence
by using Proposition \ref{prop:pull_back_coarse_str}. 
Hence if $W$ is a corona of $X$, 
then $W$ is naturally a corona of $\abs{\U(k)}$ for each $k\in \N$. 
We have the following diagram
\[
\xymatrix{
&0 \ar[r] &C_0(\abs{\U(k+1)}) \ar[r] \ar[d]&C(\abs{\U(k+1)}\cup W) \ar[r] \ar[d]
&C(W)\ar[r] \ar[d]^{=}&0\\
&0 \ar[r] &C_0(\abs{\U(k)}) \ar[r] &C(\abs{\U(k)}\cup W) \ar[r]&C(W)\ar[r] &0, 
}
\]
where we can assume that left vertical map is
 surjective without loss of generality. Hence we have 
\[
\xymatrix{
&0 \ar[r] &\varprojlim C_0(\abs{\U(k)}) \ar[r] &\varprojlim C(\abs{\U(k)}\cup W) \ar[r]&C(W)\ar[r] &0.}
\]
The map $\epsilon$ induces the following: 
\[
\xymatrix{
&0 \ar[r] &\varprojlim C_0(\abs{\U(k)}) \ar[r]\ar[d] 
&\varprojlim C(\abs{\U(k)}\cup W) \ar[r]\ar[d]&C(W)\ar[r] \ar[d]^=&0\\
&0 \ar[r] &C_0(X) \ar[r] &C(X\cup W) \ar[r]&C(W)\ar[r] &0.}
\]

Since the inclusion $\C\to C(W)$ factors through 
$\varprojlim C(\abs{\U(k)}\cup W) \to C(W)$,  we have
\[
\xymatrix{
&\tilde{K}^{*-1}(W)\ar[r]^{T_W} \ar[dr]^{\partial}&KX^*(X)\ar[d]^c\\
&&K^*(X).}
\]
From the construction, it is
easy to see that the transgression map is natural in the sense of
 axiom~(\ref{axiom:transgression-map}). 
\end{proof}

\subsection{The coarse co-assembly map}
\label{sec:coarse-co-assembly}
Let $X$ be a proper metric space. We denote by
$B(\mathcal{H})$ the $C^*$-algebra of bounded linear operators
on a separable infinite dimensional Hilbert space $\mathcal{H}$. We also
denote by $\K$ the $C^*$-algebra of compact operators on $\mathcal{H}$.
\begin{definition}[\cite{MR2225040}]
 \label{def:stable-Higson-corona}
We let $\barHig$ be the $C^*$-algebra of bounded continuous
 $B(\mathcal{H})$-valued Higson functions on $X$ such that 
$f(x) - f(y) \in \K$ for all $x,y\in X$. The quotient 
$\rsHig(X):= \barHig(X)/C_0(X,\K)$ is called 
the reduced stable Higson corona of $X$. 
\end{definition}
See \cite[Definition 4.3]{MR2225040} for the unreduced stable
Higson corona.
\begin{proposition}[\cite{MR2225040}]
 \label{prop:funtority-stable-Higson}
The assignment $X\mapsto \rsHig(X)$ is a contravariant functor from the
 coarse category to the category of $C^*$-algebras.
\end{proposition}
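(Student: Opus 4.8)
The plan is to produce the functorial maps $f^*$ by a \emph{regularization} of coarse maps that circumvents the fact that a coarse map between proper metric spaces need not be continuous. Fix proper metric spaces $X,Y$ and a coarse map $f\colon X\to Y$. Since $X$ is proper, choose a locally finite cover $\U$ of $X$ by sets of uniformly bounded diameter, a subordinate partition of unity $\{\phi_U\}_{U\in\U}$, and a base point $x_U\in U$ for each $U$. For $g\in\barHig(Y)$ define a function $f^\sharp g\colon X\to B(\mathcal{H})$ by
\[
 (f^\sharp g)(x):=\sum_{U\in\U}\phi_U(x)\,g(f(x_U)).
\]
Local finiteness of $\U$ makes this continuous, and boundedness of $g$ makes it bounded.

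First I would check that $f^\sharp g\in\barHig(X)$. Using $\sum_U\phi_U\equiv 1$ twice one may write $(f^\sharp g)(x)-(f^\sharp g)(y)=\sum_{U,V}\phi_U(x)\phi_V(y)\big(g(f(x_U))-g(f(x_V))\big)$, a finite combination of differences that all lie in $\K$ by the defining property of $g$; hence $(f^\sharp g)(x)-(f^\sharp g)(y)\in\K$ for all $x,y$. For the vanishing variation, fix a controlled set $E\subset X\times X$ and $R>0$ with $d(x,y)\le R$ for all $(x,y)\in E$. For $(x,y)\in E$, all base points $x_U,x_V$ occurring in the two sums lie within a bounded distance of one another, hence so do the points $f(x_U),f(x_V)$ because $f$ is bornologous; thus $\norm{(f^\sharp g)(x)-(f^\sharp g)(y)}$ is dominated by the variation of $g$ over a fixed controlled set of $Y\times Y$, and since $f$ is proper the points $f(x_U)$ escape every bounded set of $Y$ as $x,y$ escape every bounded set of $X$. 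Vanishing variation of $g$ then finishes the estimate. This bookkeeping is the most technical part of the argument, though it is entirely elementary.

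Next I would show that $g\mapsto f^\sharp g$ descends to a $*$-homomorphism $f^*\colon\rsHig(Y)\to\rsHig(X)$. If $g\in C_0(Y,\K)$ then $f^\sharp g\in C_0(X,\K)$ because $f$ is proper, so $f^\sharp$ passes to the quotients. If we replace the cover, the partition of unity, or the base points, the resulting function differs from $f^\sharp g$ only by one that is pointwise in $\K$ and vanishes in norm at infinity — by the same controlled-difference-plus-vanishing-variation estimate — so the induced map on $\rsHig$ is independent of all choices. The assignment $f^\sharp$ is additive and $*$-preserving on the nose; it is multiplicative only modulo $C_0(X,\K)$, the defect $(f^\sharp g_1)(f^\sharp g_2)-f^\sharp(g_1g_2)$ being a finite sum of terms $\phi_U(x)\phi_V(x)\,g_1(f(x_U))\big(g_2(f(x_V))-g_2(f(x_U))\big)$, which lies in $C_0(X,\K)$ by the same estimate. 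Hence $f^*$ is a well-defined $*$-homomorphism.

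Finally, functoriality and coarse invariance. For the identity, $(\mathrm{id}^\sharp g)(x)-g(x)=\sum_U\phi_U(x)\big(g(x_U)-g(x)\big)$ vanishes at infinity, so $\mathrm{id}^*=\mathrm{id}$. For maps $X\xrightarrow{f}Y\xrightarrow{g}Z$ and $h\in\barHig(Z)$, both $(g\circ f)^\sharp h$ and $f^\sharp(g^\sharp h)$ are regularizations of $h\circ g\circ f$, and their difference is a finite sum of terms involving $h(g(y_V))-h(g(f(x_U)))$ with $y_V$ close to $f(x_U)$; this lies in $C_0(X,\K)$ because $f$ and $g$ are bornologous and proper and $h$ has vanishing variation, whence $(g\circ f)^*=f^*\circ g^*$. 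If $f,f'\colon X\to Y$ are close, then $\big(f^\sharp g-(f')^\sharp g\big)(x)=\sum_U\phi_U(x)\big(g(f(x_U))-g(f'(x_U))\big)$ and the pair $(f(x_U),f'(x_U))$ stays in a fixed controlled set of $Y\times Y$, so this difference lies in $C_0(X,\K)$ and $f^*=(f')^*$; thus $f^*$ depends only on the close class of $f$. The one real obstacle is the discontinuity of coarse maps, which the regularization $f^\sharp$ is designed to remove; the rest is the standard ``vanishing at infinity'' calculus for Higson functions.
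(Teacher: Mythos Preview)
Your regularization argument is correct and is essentially the standard one. Note, however, that the paper does not supply its own proof of this proposition: it is stated with attribution to Emerson--Meyer \cite{MR2225040} and no argument is given in the present paper. The construction you sketch (averaging $g\circ f$ against a partition of unity subordinate to a uniformly bounded cover, then checking that all ambiguities and algebraic defects land in $C_0(X,\K)$) is precisely the approach of the original reference, so your proposal is faithful to the source the paper cites.
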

Let $\{\U_n\}$ be an anti-\v{C}ech system of $X$. 
We fix coarsening maps $\abs{\U_n}\to \abs{\U_{n+1}}$ and put
$\mathcal{X}:= \varinjlim \abs{\U_n}$. Then we have 
canonical maps $\Psi_n\colon \abs{U_n}\to \mathcal{X}$.
We put
\begin{align*}
 C_0(\mathcal{X},\K)&:= 
\{f\colon \mathcal{X}\to \K: f\circ \Psi_n\in C_0(\abs{U_n},\K)
\textrm{ for all }n\in \N\};\\
 \barHig(\mathcal{X})&:= 
\{f\colon \mathcal{X}\to B(\mathcal{H}): f\circ \Psi_n \in \barHig(\abs{U_n})
\textrm{ for all }n\in \N\}.
\end{align*}
Both of $ C_0(\mathcal{X},\K)$ and $\barHig(\mathcal{X})$ are 
$\sigma$-$C^*$-algebras. We have
\[
  C_0(\mathcal{X},\K)= \varprojlim C_0(\abs{U_n},\K),\; 
  \barHig(\mathcal{X}) = \varprojlim \barHig(\abs{U_n}).
\]
Since coarsening maps $X\to \abs{U_n}$ and
$\abs{U_{n}}\to \abs{U_{n+1}}$ are coarse equivalences,
Proposition~\ref{prop:funtority-stable-Higson} implies that the
projective limit
\[
 \rsHig(\mathcal{X}) := \varprojlim \rsHig(\abs{U_n})
\]
is again a $C^*$-algebra, which is isomorphic to $\rsHig(X)$.
The following sequences of $\sigma$-$C^*$-algebras is exact 
(\cite[Lemma 3.12]{MR2225040}).
\begin{align}
\label{exact:sHig}
 0 \to C_0(\mathcal{X},\K) \to \barHig(\mathcal{X}) \to
 \rsHig(\mathcal{X}) \to 0.
\end{align}

\begin{definition}[\cite{MR2225040}]
\label{def:coarse-co-assembly-map}
 Let $X$ be a proper metric space. The coarse co-assembly map for $X$ is
 the map
\[
 \mu^* \colon K_{*+1}(\rsHig(X)) \to KX^*(X)
\]
that is obtained from the connecting map of the exact 
sequence~(\ref{exact:sHig}).
\end{definition}

\begin{proposition}[Emerson-Meyer, Willett]
\label{prop:K-theory-of-sHig-is-cohomology}
 The $K$-theory of the reduced stable Higson corona is a generalized
 coarse cohomology theory.
\end{proposition}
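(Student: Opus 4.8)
The plan is to verify, for the contravariant functor $X\mapsto K_{*}(\rsHig(X))$ (read as a $\Z$-graded group using $2$-periodicity of $K$-theory), the two conditions of Definition~\ref{def:generalized-coarse-cohomology}. Contravariant functoriality on the coarse category is precisely Proposition~\ref{prop:funtority-stable-Higson}, due to Emerson--Meyer. What remains is axiom~(\ref{axiom:product-with-half-line}), the vanishing on $X\times\N$, and axiom~(\ref{axiom:Mayer-Vietoris}), a Mayer--Vietoris sequence for $\omega$-excisive decompositions; both are in substance theorems of Emerson--Meyer~\cite{MR2225040} and Willett~\cite{WillettThesis}, and the work is to extract them in the present formulation.

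For axiom~(\ref{axiom:product-with-half-line}) I would first observe that $X\times\N$, with the $l_{1}$-metric, is coarsely flasque in the sense of Lemma~\ref{lem:flasque}: the shift $\phi(x,n)=(x,n+1)$ is close to the identity, satisfies $\phi^{n}(X\times\N)\cap K=\emptyset$ once $n$ exceeds the largest $\N$-coordinate occurring in a bounded set $K$, and preserves the metric, so the three conditions of that lemma hold. Since $X\times\N$ is coarsely equivalent to $X\times[0,\infty)$, it then suffices that $K_{*}(\rsHig(-))$ vanishes on coarsely flasque spaces, which is proved in \cite{MR2225040} and \cite{WillettThesis} by an Eilenberg--swindle-type argument along the half-line direction (using $\Horo\cong\bigoplus_{n\geq1}\Horo$ to superpose the translates $f\circ\phi^{n}$ of a stable Higson function $f$ into an endomorphism of $\rsHig(X\times\N)$ that absorbs the identity at the level of $K$-theory). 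I would not reprove this vanishing, only check that the $X\times\N$ of Definition~\ref{def:generalized-coarse-cohomology} is covered by it, which the coarse equivalence with the half-line supplies.

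For axiom~(\ref{axiom:Mayer-Vietoris}) I would run the argument in parallel with the proof of Proposition~\ref{prop:coarse_cohomology} for $KX^{*}(-)$ given above. Fix a $C$-dense uniformly discrete subset $Z\subset X$ and the associated anti-\v{C}ech system $\{\U_{Z,C}(k)\}$, whose coarsening maps are injective simplicial maps, and for a closed $L\subset X$ let $\abs{\U(k)^{L}}$ be the full subcomplex spanned by the members of $\U(k)$ that meet $L$, as in Claim~\ref{subspace}. The crucial input is an $\omega$-excisive gluing lemma: if $K=K_{1}\cup K_{2}$ is a decomposition of a simplicial complex (with the path metric) that is excisive as simplicial complexes---hence an $\omega$-excisive decomposition---then restriction exhibits $\barHig(K)$, and therefore also $\rsHig(K)$, as the pull-back
\[
 \barHig(K_{1})\;\times_{\barHig(K_{1}\cap K_{2})}\;\barHig(K_{2}).
\]
Indeed, if $(x,y)$ lies in a controlled set with $x\in K_{1}$ and $y\in K_{2}$, then $\omega$-excisiveness yields $z\in K_{1}\cap K_{2}$ a bounded distance from $x$, and the splitting $\grad f(x,y)=\grad f(x,z)+\grad f(z,y)$ writes the increment as a sum of increments over a controlled set inside $K_{1}$ and one inside $K_{2}$; so a function that is a stable Higson function on each of $K_{1},K_{2}$ is one on $K$, and the restriction maps are surjective for the same reason. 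Applying this to $\abs{\U(k)}=\abs{\U(k)^{A}}\cup\abs{\U(k)^{B}}$ gives a projective system of pull-back squares of $C^{*}$-algebras---the transition maps being restriction along injective subcomplex inclusions, hence surjective---to which Lemma~\ref{pullback} applies and produces the long exact sequence. Finally, exactly as in the $KX^{*}$ proof, the $\omega$-excisiveness of $X=A\cup B$ together with Claim~\ref{subspace} identifies the ``intersection'' term $\varprojlim_{k}\rsHig(\abs{\U(k)^{A}}\cap\abs{\U(k)^{B}})$ with $\rsHig(A\cap B)$ by a cofinality argument, and the remaining three terms with $\rsHig$ of $X$, $A$, $B$; functoriality of the resulting sequence is routine.

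The main obstacle is axiom~(\ref{axiom:Mayer-Vietoris}), and inside it the two uses of $\omega$-excisiveness: that an $\omega$-excisive union of stable Higson algebras is a pull-back (equivalently, that $\barHig(X)\to\barHig(A)$ is surjective with the expected kernel), and that the intersection term collapses to $\rsHig(A\cap B)$ rather than to the a priori larger intersection complex. These are exactly what the Mayer--Vietoris theorem for $\rsHig$ in \cite{MR2225040} provides; the role of the present proof is only to transport them through Lemma~\ref{pullback} and the anti-\v{C}ech formalism of Section~\ref{sec:coarse-k-thoery-proof} into the axioms of Definition~\ref{def:generalized-coarse-cohomology}. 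A minor point to settle along the way is the passage between $X\times\N$ and the half-line $X\times[0,\infty)$ in axiom~(\ref{axiom:product-with-half-line}), which is immediate from coarse equivalence.
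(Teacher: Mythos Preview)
Your proposal is correct in substance, and for axiom~(\ref{axiom:product-with-half-line}) it matches the paper: both reduce to the Emerson--Meyer vanishing result (the paper cites \cite[Theorem~5.2]{MR2225040} directly, you pass through coarse flasqueness of $X\times\N$ first, but the content is the same).

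For axiom~(\ref{axiom:Mayer-Vietoris}) you take a more elaborate route than the paper, which simply cites \cite[Proposition~4.3.6]{WillettThesis}. Your idea of mimicking the $KX^{*}$ proof via the anti-\v{C}ech system and Lemma~\ref{pullback} works, but it is over-engineered in a way that obscures the point: by Proposition~\ref{prop:funtority-stable-Higson}, $\rsHig(-)$ is already a functor on the coarse category, so $\rsHig(\abs{\U(k)})\cong\rsHig(X)$ for every $k$ and your projective system is constant up to isomorphism. The heavy machinery of Lemma~\ref{pullback} therefore collapses to the ordinary Mayer--Vietoris sequence for a single pull-back square of $C^{*}$-algebras \cite[Theorem~21.2.2]{MR1656031}. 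What Willett actually does---and what your ``$\omega$-excisive gluing lemma'' is groping toward---is to show directly that $\rsHig(X)$ is the pull-back of $\rsHig(A)$ and $\rsHig(B)$ over $\rsHig(A\cap B)$, using $\omega$-excisiveness to control increments across the overlap and an extension argument for surjectivity. Your sketch of this gluing is on the right track, though the line ``the restriction maps are surjective for the same reason'' hides real work (extension of stable Higson functions off a subspace). In short: your detour through nerve complexes is not wrong, but it is unnecessary precisely because $\rsHig$, unlike $C_{0}$, is coarsely invariant.
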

\begin{proof}
 The axiom~(\ref{axiom:product-with-half-line}) follows from
 \cite[Theorem 5.2.]{MR2225040}. The axiom~(\ref{axiom:Mayer-Vietoris}) is
 proved in \cite[Proposition 4.3.6]{WillettThesis}.
\end{proof}


The Mayer-Vietoris exact sequences for both of $K_*(\rsHig(-))$ and
$KX^*(-)$ come from the general notion of the  Mayer-Vietoris exact sequence
associated to a pull-back diagram of $C^*$-algebras. 
(See \cite[Theorem 21.2.2]{MR1656031}.)
Therefore, the connecting maps in both of these exact sequences and
coarse co-assembly maps are naturally commutative. That is, for an
$\omega$-excisive decomposition $X=A\cup B$ of a proper metric space $X$,
we have the following commutative diagram, 
\begin{align*}
 \xymatrix{
\ar[r] &K_{p+1}(\rsHig(X)) \ar[r]\ar[d]  
       &K_{p+1}(\rsHig(A))\oplus K_{p+1}(\rsHig(B)) \ar[r]\ar[d]  
       &K_{p+1}(\rsHig(A\cap B))\ar[r]\ar[d]  &\\
\ar[r] &KX^{p}(X) \ar[r]
       &KX^{p}(A)\oplus KX^{p}(B) \ar[r]
       &KX^{p}(A\cap B)\ar[r] &
}
\end{align*}
where both of horizontal sequences are exact and vertical maps are
coarse co-assembly maps.

\section{Coarse cohomology of hyperbolic metric spaces}
\label{sec:coarse-cohom-hyperbolic-metric-spaces}
In this section, we summarize the result of \cite{ROEHYPERBOLIC} and
\cite{MR1388312} from the view point of the coarse cohomology theories.
Let $M^*$ be the $K$-theory or the Alexander-Spanier
cohomology with compact supports and let $MX^*$ be its
coarsening.

\subsection{The transgression map of the open cone}
\label{sec:transgr-map-open}
Let $Y$ be a compact subset of the unit sphere in a
separable Hilbert space $H$.
The open cone on $Y$, denoted $\mathcal{O}Y$, is the set of all non-negative
multiples of points in $Y$. The closed cone 
$\mathcal{C}Y = \{tx \in H: t\in [0,1], x\in Y\}$ is a compactification
of $\mathcal{O}Y$ and $Y$ is a corona of it. 
By axiom~(\ref{axiom:transgression-map}), there is a commutative
diagram. (See also \cite[Example 5.28]{MR1147350}.)
\begin{align}
\label{diagram:transgression}
\xymatrix{
& MX^{q}(\mathcal{O}Y) \ar[dd]_c\\
\tilde{M}^{q-1}(Y) \ar[ur]^{T_Y} \ar[dr]^{\partial}&\\
& M^{q}(\mathcal{O}Y)
}
\end{align} 
Here $T_Y$ is a transgression map and $\partial$ is the boundary map in
the long exact cohomology sequence for $Y\subset \mathcal{C}Y$.
\begin{lemma}
\label{lem:transg_map_for_cone}
 The character map 
 $c\colon MX^{q}(\mathcal{O}Y) \to M^q(\mathcal{O}Y)$ and 
 the transgression map
 $T_{Y} \colon \tilde{M}^{q-1}(Y) \to  MX^q(\mathcal{O}Y)$ are
 isomorphisms.
\end{lemma}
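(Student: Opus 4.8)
\textbf{Proof plan for Lemma~\ref{lem:transg_map_for_cone}.}
The plan is to exploit the fact that the open cone $\mathcal{O}Y$ is both uniformly contractible and coarsely flasque, so that the generalized coarse cohomology axioms collapse the picture to a single known long exact sequence. First I would observe that $\mathcal{O}Y$ is contractible as a topological space (radial contraction to the cone point), and in fact uniformly contractible with bounded geometry up to coarse equivalence, since $Y$ is a compact subset of a sphere and can be replaced by a finite subcomplex without changing the coarse type; hence by axiom~(\ref{axiom:character-map}) the character map $c\colon MX^q(\mathcal{O}Y)\to M^q(\mathcal{O}Y)$ is an isomorphism. Strictly, to invoke bounded geometry one passes to a $C$-dense uniformly discrete net in $\mathcal{O}Y$ and a corresponding anti-\v{C}ech system; the nerve complexes are then uniformly contractible simplicial complexes, and one appeals to \cite[Theorem 4.8]{MR2225040} as quoted in the proof of Proposition~\ref{prop:coarse_cohomology}.

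Next I would show the boundary map $\partial\colon \tilde{M}^{q-1}(Y)\to M^q(\mathcal{O}Y)$ from the long exact sequence for $Y\subset\mathcal{C}Y$ is an isomorphism. This is immediate from the fact that $\mathcal{C}Y$ is the closed cone on $Y$, hence contractible, so $M^*(\mathcal{C}Y)\cong M^*(\{*\})$; feeding this into the long exact sequence
\[
\cdots \to M^{q-1}(\mathcal{C}Y)\to M^{q-1}(Y)\xrightarrow{\partial} M^q(\mathcal{O}Y)\to M^q(\mathcal{C}Y)\to\cdots
\]
and using that the restriction $M^*(\mathcal{C}Y)\to M^*(Y)$ is split by the constant map, one gets that $\partial$ restricted to the reduced group $\tilde{M}^{q-1}(Y)=\Coker(\pi_Y^*)$ is an isomorphism onto $M^q(\mathcal{O}Y)$. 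This is exactly the standard computation of cohomology of an open cone, and for $M^*=H_c^*$ it is \cite[Example 5.28]{MR1147350}.

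Finally, the commutative triangle~(\ref{diagram:transgression}), which is the instance of axiom~(\ref{axiom:transgression-map}) for the corona $Y$ of $\mathcal{O}Y$, gives $c\circ T_Y=\partial$. Since both $c$ and $\partial$ have now been shown to be isomorphisms, it follows formally that $T_Y$ is an isomorphism as well. I expect the only genuine subtlety to be the justification that the coarse type of $\mathcal{O}Y$ has bounded geometry and is uniformly contractible in the precise sense required by axiom~(\ref{axiom:character-map}); the topological input (contractibility of $\mathcal{C}Y$, the split long exact sequence) is routine, and everything else is a diagram chase. One should also note that $\mathcal{O}Y$ is coarsely flasque --- the map $\phi(tx)=(t+1)x$ witnesses this --- but Lemma~\ref{lem:flasque} would only give vanishing, so it is the uniform contractibility route via the character map, not the flasqueness route, that is needed here.
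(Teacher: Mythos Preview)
Your argument has a genuine gap at the first step: the open cone $\mathcal{O}Y$ is \emph{not} uniformly contractible for a general compact metrizable $Y$, so axiom~(\ref{axiom:character-map}) cannot be invoked directly. Take for instance $Y$ a Cantor set (which occurs as the Gromov boundary of a free group, so is certainly within the scope of the lemma). A ball of radius $R$ centred at a point $tx_0$ with $t$ large will meet several distinct rays of $\mathcal{O}Y$ whenever $Y$ contains points at distance $<R/t$ from $x_0$; the intersection is then disconnected, and any contraction inside $\mathcal{O}Y$ must pass through the cone point, at distance $t$. So no uniform $S$ works. Your suggestion to ``replace $Y$ by a finite subcomplex without changing the coarse type'' does not help: the lemma is about the transgression map for $Y$ itself, and $\tilde{M}^{*-1}(Y)$ is not invariant under such a replacement. (Your aside on flasqueness is also incorrect: the map $\phi(tx)=(t+1)x$ fails condition~(c) of Lemma~\ref{lem:flasque}, since $d(\phi^n(tx_0),\phi^n(ty_0))\to\infty$ as $n\to\infty$ whenever $x_0\neq y_0$; and indeed flasqueness would force $MX^*(\mathcal{O}Y)=0$, contradicting the conclusion for nontrivial $Y$.)

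The paper's proof circumvents uniform contractibility entirely. It uses the specific anti-\v{C}ech system $\{\U_i\}$ of $\mathcal{O}Y$ constructed in \cite[Proposition~4.3]{MR1388312}, whose key feature is that each $\abs{\U_i}$ admits $Y$ as a corona and the compactified coarsening maps $\overline{\abs{\U_i}}\to\overline{\abs{\U_{i+1}}}$ are \emph{nullhomotopic}. From this one reads off directly that $\partial$ identifies $\tilde{M}^{q-1}(Y)$ with $\Im[M^q(\abs{\U_{i+1}})\to M^q(\abs{\U_i})]$ for every $i$; hence the projective system is Mittag-Leffler, $\limone M^q(\abs{\U_i})=0$, and $\varprojlim M^q(\abs{\U_i})\cong \tilde{M}^{q-1}(Y)\cong M^q(\mathcal{O}Y)$. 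Now axiom~(\ref{axiom:Milnor}), not axiom~(\ref{axiom:character-map}), shows that $c$ is an isomorphism. Your computation of $\partial$ and the final diagram chase via $c\circ T_Y=\partial$ are correct and match the paper; what is missing is precisely this Higson--Roe anti-\v{C}ech argument in place of the false uniform-contractibility claim.
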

\begin{proof}
First, we consider a
 cohomology long exact sequence for $Y\subset \mathcal{C}Y$. 
Since $\mathcal{C}Y$ is homotopic to one point, the long exact sequence
 splits and we obtain 
\[
 0 \to M^{q-1}(\mathcal{C}Y) \to M^{q-1}(Y) \xrightarrow{\partial } 
 M^{q}(\mathcal{O}Y) \to 0.
\]
Hence $\partial \colon \tilde{M}^{q-1}(Y)\to M^q(\mathcal{O}Y)$ is an isomorphism.

Next, let $\{\mathcal{U}_i\}$ be an anti-\v{C}ech system of $\mathcal{O}Y$
constructed in the proof of \cite[Proposition 4.3]{MR1388312} (see also
\cite[Appendix B]{relhypgrp}). Then it is shown that:
\begin{itemize}
 \item Each $\abs{\U_i}$ is equipped with a
       proper coarse structure which is coarsely equivalent to
       $\mathcal{O}Y$, so $Y$ is also a corona of $\abs{\U_i}$. 
       Thus we have a coarse compactification 
       $\overline{\abs{\U_i}}:= \abs{\U_i}\cup Y$.
 \item The coarsening map $\abs{\U_i} \to \abs{\U_{i+1}}$
       covers the identity on $Y$.
 \item The extended map 
       $\overline{\abs{\U_i}} \to \overline{\abs{\U_{i+1}}}$ is
       nullhomotopic.
\end{itemize}
By the argument similar to the proof of 
\cite[Proposition 4.3]{MR1388312},
 we can show that the boundary map $\partial$ gives an
 isomorphism between $\tilde{M}^{q-1}(Y)$ and
 $\Im[M^q(\abs{\U_{i+1}}) \to M^q(\abs{\U_i})]$. This
 implies $\limone M^q(\abs{\U_i}) = 0$ and 
 $\tilde{M}^{q-1}(Y) \cong \varprojlim M^q(\abs{\U_i})$. 
Thus it follows from axiom~(\ref{axiom:Milnor}) that 
the character map $c\colon MX^q(\mathcal{O}Y)\to M^q(\mathcal{O}Y)$ is
 an isomorphism. Now the diagram (\ref{diagram:transgression}) 
 shows that the transgression map $T_Y$ is an isomorphism.
\end{proof}

\subsection{Hyperbolic spaces}
\label{sec:hyperbolic-spaces}
Let $X$ be a proper geodesic space which
is hyperbolic in the sense of
Gromov. Roe~\cite{ROEHYPERBOLIC} showed
that the Gromov boundary of $X$, denoted by $\partial X$, is a
corona of $X$.
Higson-Roe~\cite{MR1388312} constructed a coarse map
$\mathcal{O}(\partial X) \to X$
and showed that it is a coarse homotopy equivalence.  Thus by coarse
homotopy invariance, we have 
$MX^*(X) \cong MX^*(\mathcal{O}(\partial X))$. 
For details, see \cite[Section 8]{MR1388312} and 
\cite[Section4.7]{WillettThesis}. By the same reason, we have 
$K_*(\rsHig(X)) \cong K_*(\rsHig(\mathcal{O}(\partial X)))$. 
Willett \cite[Section 4.5]{WillettThesis} showed that the coarse
co-assembly map for the open cone $\mathcal{O}(\partial X)$ is an
isomorphism. Therefore we have the following.


\begin{proposition}
\label{prop:coarse_co-assemby_for_hyp}
 Let $X$ be a proper geodesic space which is
 hyperbolic in the sense of Gromov. Then the coarse co-assembly map 
$\mu^*\colon K_{*+1}(\rsHig(X)) \to KX^*(X)$ is an
 isomorphism. 
\end{proposition}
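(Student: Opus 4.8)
The plan is to reduce the statement for a general proper geodesic hyperbolic space $X$ to the case of the open cone $\mathcal{O}(\partial X)$, for which the coarse co-assembly map is already known to be an isomorphism by Willett's work, and then transport the conclusion back along a coarse homotopy equivalence. Concretely, I would first recall that by Roe's theorem the Gromov boundary $\partial X$ is a corona of $X$, and by Higson--Roe there is a coarse map $\mathcal{O}(\partial X)\to X$ which is a coarse homotopy equivalence. The first step is therefore to invoke coarse homotopy invariance: since $KX^*(-)$ is a generalized coarse cohomology theory (Proposition~\ref{prop:coarse_cohomology}) satisfying axioms (\ref{axiom:product-with-half-line}) and (\ref{axiom:Mayer-Vietoris}), and $K_*(\rsHig(-))$ is as well (Proposition~\ref{prop:K-theory-of-sHig-is-cohomology}), the coarse homotopy equivalence $\mathcal{O}(\partial X)\to X$ induces isomorphisms $KX^*(X)\cong KX^*(\mathcal{O}(\partial X))$ and $K_*(\rsHig(X))\cong K_*(\rsHig(\mathcal{O}(\partial X)))$.

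The second step is to check that these isomorphisms are compatible with the coarse co-assembly map. This follows from naturality of $\mu^*$: the coarse co-assembly map is a natural transformation of functors on the coarse category (it arises from the connecting map of the functorial exact sequence~(\ref{exact:sHig})), so for any coarse map $f\colon Y\to X$ the square relating $\mu^*_Y$ and $\mu^*_X$ commutes. Applying this to the coarse homotopy equivalence $\mathcal{O}(\partial X)\to X$ gives a commuting square
\begin{align*}
\xymatrix{
K_{*+1}(\rsHig(X)) \ar[r]^-{\mu^*} \ar[d]^\cong & KX^*(X) \ar[d]^\cong\\
K_{*+1}(\rsHig(\mathcal{O}(\partial X))) \ar[r]^-{\mu^*} & KX^*(\mathcal{O}(\partial X)),
}
\end{align*}
so $\mu^*$ for $X$ is an isomorphism if and only if $\mu^*$ for $\mathcal{O}(\partial X)$ is.

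The third and final step is to cite the known result for the open cone: Willett \cite[Section 4.5]{WillettThesis} proved that the coarse co-assembly map $\mu^*\colon K_{*+1}(\rsHig(\mathcal{O}(\partial X)))\to KX^*(\mathcal{O}(\partial X))$ is an isomorphism. (One can also see this structurally, using Lemma~\ref{lem:transg_map_for_cone}: for the open cone the character map and the transgression map are isomorphisms for both $M^*=K^*$ and the analogous statement for the stable Higson corona, and $\mu^*$ intertwines these transgression maps with the identity on $\tilde M^{*-1}(\partial X)$ — this is exactly the kind of diagram chase encoded in axiom~(\ref{axiom:transgression-map}) together with the commutativity of $\mu^*$ with the Mayer--Vietoris/boundary maps noted at the end of Section~\ref{sec:coarse-co-assembly}.) Combining the three steps yields that $\mu^*$ for $X$ is an isomorphism.

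I expect the only genuine subtlety to be the second step — verifying that the coarse homotopy equivalence really does induce a \emph{commuting} square with $\mu^*$, i.e.\ that $\mu^*$ is natural not merely for coarse maps but descends correctly through coarse homotopy. This is handled by the observation that both $KX^*$ and $K_*(\rsHig(-))$ are homotopy invariant (Proposition following Lemma~\ref{lem:flasque}) and that $\mu^*$, being built from a functorial $C^*$-algebra extension, is a genuine natural transformation; so the potential obstacle dissolves once one is careful about which category the functors live on. Everything else is a matter of assembling results already established earlier in the paper (coarse homotopy invariance, the open-cone computation) with Willett's theorem.
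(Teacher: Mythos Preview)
Your proposal is correct and follows essentially the same argument as the paper: reduce to the open cone $\mathcal{O}(\partial X)$ via the Higson--Roe coarse homotopy equivalence, invoke coarse homotopy invariance of both $KX^*(-)$ and $K_*(\rsHig(-))$, and then cite Willett's result that $\mu^*$ is an isomorphism for open cones. The paper's proof (given in the paragraph preceding the proposition) is in fact terser than yours, leaving the naturality square for $\mu^*$ implicit; your explicit discussion of that point is a welcome clarification rather than a deviation.
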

It is easy to see that the coarse map $\mathcal{O}(\partial
X)\rightarrow X$ covers the 
identity on $\partial X$. Therefore, 
by Lemma~\ref{lem:transg_map_for_cone}, axiom
(\ref{axiom:transgression-map})
and coarse homotopy invariance, we have the following.
\begin{corollary}
\label{cor:transgression}
Let $X$ be a non-compact proper geodesic space which is hyperbolic in the
 sense of 
 Gromov. The transgression maps 
\begin{align*}
 T_{\partial X}\colon& KX_*(X)\to \tilde{K}_{*-1}(\partial X);\\
 T_{\partial X}\colon& \tilde{K}^{*-1}(\partial X)\to KX^*(X);\\
 T_{\partial X}\colon& \tilde{H}^{*-1}(\partial X)\to HX^*(X).
\end{align*}
are isomorphisms.
\end{corollary}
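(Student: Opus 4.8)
The plan is to deduce all three isomorphisms by one diagram chase, using the open cone $\mathcal{O}(\partial X)$ as an intermediary. The two ingredients are: $(1)$ the open-cone computations, namely Lemma~\ref{lem:transg_map_for_cone} applied with $Y=\partial X$ (legitimate since $\partial X$ is compact metrizable, hence embeds in the unit sphere of a separable Hilbert space), which says $T_{\partial X}\colon\tilde M^{q-1}(\partial X)\to MX^q(\mathcal{O}(\partial X))$ is an isomorphism for $M^*=K^*$ and $M^*=H_c^*$, together with its homological analogue $T_{\partial X}\colon KX_*(\mathcal{O}(\partial X))\to\tilde K_{*-1}(\partial X)$, which is an isomorphism by the argument of \cite[Proposition~4.3]{MR1388312}; and $(2)$ the Higson--Roe coarse map $\pi\colon\mathcal{O}(\partial X)\to X$, which is a coarse homotopy equivalence and, by the remark preceding the corollary, covers the identity on $\partial X$, where $\partial X$ is regarded as a corona of both $\mathcal{O}(\partial X)$ and $X$.

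For the cohomological transgressions, with $M^*$ being $K^*$ or $H_c^*$ and $MX^*$ its coarsening, I would feed $\pi$ into the naturality part of axiom~(\ref{axiom:transgression-map}) to obtain the commutative square
\[
\xymatrix{
\tilde M^{q-1}(\partial X)\ar[d]_{T_{\partial X}}\ar[r]^{\mathrm{id}}&\tilde M^{q-1}(\partial X)\ar[d]^{T_{\partial X}}\\
MX^q(X)\ar[r]^{\pi^*}&MX^q(\mathcal{O}(\partial X)).
}
\]
Here the top edge is the identity, the bottom edge $\pi^*$ is an isomorphism by coarse homotopy invariance of $MX^*$, and the right-hand vertical map is an isomorphism by Lemma~\ref{lem:transg_map_for_cone}; hence the left-hand vertical map $T_{\partial X}\colon\tilde M^{q-1}(\partial X)\to MX^q(X)$ is an isomorphism. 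Taking $M^*=K^*$ gives the $KX^*$ statement and $M^*=H_c^*$ gives the $HX^*$ statement.

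The homological case is strictly parallel: naturality of the homological transgression with respect to $\pi$ produces a square with vertical maps $T_{\partial X}$ for $\mathcal{O}(\partial X)$ and for $X$, bottom edge the identity of $\tilde K_{*-1}(\partial X)$, and top edge $\pi_*\colon KX_*(\mathcal{O}(\partial X))\to KX_*(X)$ an isomorphism by coarse homotopy invariance of coarse $K$-homology. Since the left-hand vertical map is an isomorphism by the open-cone computation, so is $T_{\partial X}\colon KX_*(X)\to\tilde K_{*-1}(\partial X)$.

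The argument is essentially formal once these inputs are granted, so I expect no serious obstacle. The two points that genuinely need checking are: that $\pi$ covers $\mathrm{id}_{\partial X}$ in the precise sense of Definition~\ref{def:cover} --- equivalently, that $\nu\pi$ intertwines the boundary projections $\nu(\mathcal{O}(\partial X))\to\partial X$ and $\nu X\to\partial X$, which is built into the Higson--Roe construction of $\pi$; and the homological open-cone statement, for which one uses the special anti-\v{C}ech system of \cite[Proposition~4.3]{MR1388312} whose coarsening maps extend to nullhomotopic maps of the coarse compactifications $\abs{\U_i}\cup\partial X$, forcing the relevant $\limone$ term to vanish so that the boundary homomorphism becomes an isomorphism.
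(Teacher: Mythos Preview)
Your proposal is correct and follows essentially the same approach as the paper: the paper's proof (given in the sentence preceding the corollary) invokes exactly Lemma~\ref{lem:transg_map_for_cone}, axiom~(\ref{axiom:transgression-map}), and coarse homotopy invariance, using that the Higson--Roe map $\mathcal{O}(\partial X)\to X$ covers the identity on $\partial X$. You have simply made explicit the commutative squares and the homological analogue that the paper leaves implicit.
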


\section{Relatively hyperbolic groups}
\label{sec:relat-hyperb-groups}
Let $G$ be a finitely generated group with a finite family of infinite
subgroups $\famP = \{P_1,\dots P_k\}$.  Groves and Manning
\cite{MR2448064} introduced an {\itshape augmented space} on which $G$ acts
properly discontinuously by isometries. The augmented space characterize
hyperbolicity of $G$ relative to $\famP$.
We review the construction and show that there exists a weak coarsening
of the augmented space for cohomology theories.
\begin{remark}
 Suppose that $G$ is hyperbolic relative to $\famP$. If $P=\emptyset$, then
 $G$ is hyperbolic and thus Theorem~\ref{th:coarse_co-assembly-map} and
 Theorem \ref{main_theorem} follow from
 Proposition~\ref{prop:coarse_co-assemby_for_hyp} and
 Corollary~\ref{cor:transgression}. 
If $G\in \famP$ then $\famP=\{G\}$,
 thus Theorem~\ref{th:coarse_co-assembly-map} and
 Theorem \ref{main_theorem} are trivial. It is well known that all
 elements are of infinite index of $G$ if $G\notin \famP$.
\end{remark}
From now on, we assume that $\famP$ is not empty and all elements of
$\famP$ are of infinite index in $G$. 
\subsection{The augmented space}
\label{sec:augmented-space}
\begin{definition}
\label{def:Horoball}
Let $(P,d)$ be a proper metric space.
{\itshape The combinatorial horoball} based on $P$, denoted by
 $\Horo(P)$, is the graph defined as follows:
\begin{enumerate}
 \item $\Horo(P)^{(0)} = P \times (\N\cup \{0\})$. 
 \item $\Horo(P)^{(1)}$ contains the following two type of edges:
       \begin{enumerate}[(i)]
	\item For each $l\in \N\cup\{0\}$ and $p,q\in P$, 
	      if $0< d(p,q)\leq 2^{l}$
	      then there is a 
	      {\itshape horizontal edge} connecting $(p,l)$ and $(q,l)$.
	\item For each $l\in \N\cup\{0\}$ and $p\in P$, there is a 
	      {\itshape vertical edge} connecting $(p,l)$ and $(p,l+1)$.
       \end{enumerate}
\end{enumerate}
We endow $\Horo(P)$ with the graph metric. 
\end{definition}

When $P$ is a discrete proper metric space, 
$\Horo(P)$ is a proper geodesic space
 which is hyperbolic in the sense of Gromov. 
(See \cite[Theorem 3.8]{MR2448064}). 
It is easy to see that $\Horo(P)$ is coarsely flasque.
The following is used in Section~\ref{sec:bound-relat-hyperb}.
\begin{lemma}
\label{lem:parabolic-pt-of-horoball} Let $P$ be a proper metric space. We
suppose that $P$ is discrete. Then the Gromov compactification of the
combinatorial horoball $\Horo(P)$ is a one-point compactification of
$P$. Thus the Gromov boundary of $\Horo(P)$ consists of one point,
 called the parabolic point of $\Horo(P)$.
\end{lemma}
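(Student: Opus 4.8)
The plan is to determine $\partial\Horo(P)$ directly from the combinatorial structure of the horoball, using the description of geodesics and the distance estimate of Groves--Manning \cite[Section~3, Lemma~3.10]{MR2448064}: any two vertices of $\Horo(P)$ are joined by a geodesic that runs straight up, then across along at most three horizontal edges, then straight down, and for vertices $x=(p,k)$, $y=(q,l)$ one has, up to a uniform additive constant,
\[
 \dhoro(x,y)\;\doteq\;2\max\{\,k,\;l,\;\lceil\log_2 d(p,q)\rceil\,\}-k-l
\]
when $p\neq q$, while $\dhoro((p,k),(p,l))=\abs{k-l}$. Fix a basepoint $o=(p_0,0)$. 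First I would record the easy facts: each vertical ray $\gamma_p\colon n\mapsto(p,n)$ is a geodesic ray; the ``depth'' function $(p,l)\mapsto l$ is $1$-Lipschitz for $\dhoro$, so $\dhoro(o,(p,n))\to\infty$; and for fixed $p,q$ one has $\dhoro((p,n),(q,n))=1$ once $2^n\ge d(p,q)$. Hence any two vertical rays stay within bounded distance, so they are all asymptotic and determine a single boundary point $\xi\in\partial\Horo(P)$.

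The substance of the argument is to show that \emph{every} geodesic ray $r$ from $o$ is asymptotic to $\gamma_{p_0}$, hence converges to $\xi$. Writing $r(n)=(p_n,l_n)$, note first that $l_n\le\dhoro(o,r(n))=n$. The key claim --- and the step I expect to be the main obstacle --- is that $l_n\to\infty$: a geodesic ray cannot escape to infinity while keeping its depth coordinate bounded. I would prove this by contradiction. If $l_n\le L$ along a subsequence, then applying the distance estimate to $\dhoro(o,r(n))=n$ forces $\log_2 d(p_0,p_n)\doteq\tfrac12(n+l_n)$, which tends to infinity along the subsequence; then for two far-apart indices $n<m$ of the subsequence the triangle inequality in $P$ gives $d(p_n,p_m)\ge\tfrac12 d(p_0,p_m)$, and feeding this back into the distance estimate yields $\dhoro(r(n),r(m))\doteq m$, contradicting $\dhoro(r(n),r(m))=m-n$ once $n$ is large. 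Granting $l_n\to\infty$, the distance estimate gives $\dhoro(r(n),\gamma_{p_0}(l_n))\doteq n-l_n$, so the Gromov product satisfies $(r(n)\mid\gamma_{p_0}(l_n))_o\doteq l_n\to\infty$; by $\delta$-hyperbolicity this propagates to $(r(n)\mid\gamma_{p_0}(m))_o\to\infty$ as $\min(n,m)\to\infty$, which is exactly the statement that $r$ and $\gamma_{p_0}$ are asymptotic. Thus $\partial\Horo(P)=\{\xi\}$.

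Finally, since $\Horo(P)$ is proper and geodesic, its Gromov compactification is the compact space $\Horo(P)\cup\{\xi\}$, and $x_n\to\xi$ exactly when $x_n$ eventually leaves every bounded set: a subsequence remaining in a bounded set would subconverge inside $\Horo(P)$, whereas a subsequence leaving every bounded set subconverges in the compact space $\Horo(P)\cup\{\xi\}$ and hence converges to $\xi$. Therefore $\Horo(P)\cup\{\xi\}$ is the one-point compactification of $\Horo(P)$; since the vertex set $P\times(\N\cup\{0\})$ of $\Horo(P)$ is a countable discrete space homeomorphic to $P$, this is the one-point compactification of $P$, and $\xi$ is the parabolic point.
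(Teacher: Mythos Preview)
The paper's own proof is a one-line citation: ``See Lemma 3.11 in \cite{MR2448064}.'' There is therefore no argument to compare against; you have supplied a direct proof where the paper simply defers to Groves--Manning. Your approach in fact unpacks the same circle of ideas (the distance estimate of \cite[Lemma~3.10]{MR2448064}) that underlies their Lemma~3.11.

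Your argument that $\partial\Horo(P)$ is a single point is correct. The contradiction showing that the depth coordinate $l_n$ along a geodesic ray must tend to infinity works: the ``$\doteq$'' hides a uniform additive constant $C$, so the final comparison reads $m-n \ge m - l_n - C \ge m - L - C$, forcing $n \le L+C$, which is impossible along an unbounded subsequence. The Gromov-product computation then indeed shows every ray is asymptotic to the vertical ray.

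One slip in the final paragraph: the claim that the vertex set $P\times(\N\cup\{0\})$ is ``homeomorphic to $P$'' is false when $P$ is finite, and in any case $\Horo(P)$ is a graph, not its vertex set. What you have actually shown is that the Gromov compactification $\Horo(P)\cup\{\xi\}$ is the one-point compactification of the graph $\Horo(P)$. The lemma's phrasing ``one-point compactification of $P$'' is itself somewhat loose; what matters downstream in the paper is only that the Gromov boundary is a single point, and this you have established cleanly.
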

\begin{proof}
 See Lemma 3.11. in \cite{MR2448064}.
\end{proof}


Let $G$ be a finitely generated group with a finite family of
infinite subgroups 
$\famP = \{P_1,\dots, P_k\}$. 
We take a finite generating set $\mathcal{S}$ for $G$. 
We assume that
$\mathcal{S}$ is symmetrized, so that $\mathcal{S} = \mathcal{S}^{-1}$.
We endow $G$ with the left-invariant word metric
$d_{\mathcal{S}}$ with respect to $\mathcal{S}$. 

\begin{definition}
\label{def:order-of-cosets}
 Let $G$ and $\famP$ be as above. An order of the cosets of $(G,\famP)$
 is a sequence $\{g_n\}_{n\in \N}$ such that
 $g_i=e$ for $i\in \{1,\dots,k\}$, and 
for each $r \in
\{1,\dots,k\}$, the map 
$\N\to G/P_r: a \mapsto g_{ak+r}P_r$ is bijective. 
Thus the set of all cosets $\bigsqcup_{r=1}^k G/P_r$ is indexed by the map
$\N\ni i \mapsto g_iP_{(i)}$. Here $(i)$ denotes the remainder of $i$ divided by $k$.

\end{definition}
We fix an order $\{g_n\}_{n\in\N}$ of the cosets of $(G,\famP)$.
Each coset $g_iP_{(i)}$ has a proper metric $d_i$ which is the
restriction of $d_{\mathcal{S}}$. 
Let $\Gamma$ be the Cayley graph of $(G,\mathcal{S})$.
There exists a natural embedding 
$\psi_i\colon \Horo(g_iP_{(i)};\{0\}) \hookrightarrow \Gamma$ such that
$\psi_i(x,0) = x$ for all $x\in g_iP_{(i)}$. 
\begin{definition}
\label{notation:1}
{\itshape The augmented space } $\Xaug$ is obtained by pasting
 $\Horo(g_iP_{(i)})$
 to $\Gamma$ by $\psi_i$ for all $i\in \N$. 
Thus we can write it as follows:
\begin{align*}
 \Xaug &:= \Gamma\cup \bigcup_{i\in \N} \Horo(g_iP_{(i)}).
\end{align*}
\end{definition}

\begin{remark}
\label{rem:vertices}
 The vertex set of $\Xaug$ can naturally identified with 
the disjoint union of $G$ and the set of 
3-tuple $(i,p,l)$, where $i\in \N$, $p\in g_iP_{(i)}$, and $l\in \N$.
We sometimes denote $g\in g_iP_{(i)}$ by $(i,g,0)$ for simplicity.
\end{remark}

\begin{definition}
A group $G$ is hyperbolic relative to $\famP$ if the augmented space
$\Xaug$ is hyperbolic in the sense of Gromov.
\end{definition}
Groves and Manning \cite{MR2448064} showed that the above definition is
equivalent to the original one by Gromov.

\subsection{Weak coarsening of relatively hyperbolic groups}
\label{subsec:weak-coars-relat}
In this section, we construct a topological
counterpart of the augmented
space, which is the key to the proof of
Theorem~\ref{th:coarse_co-assembly-map} and Theorem~\ref{main_theorem}. 
Let $G$ be a finitely generated group which is hyperbolic relative to 
$\famP =\{P_1,\dots,P_k\}$.
Here we assume that for $r\in \{1,\dots,k\}$, each $P_r$ admits a finite
 $P_r$-simplicial complex $\EP_r$ which is a universal space for proper
 actions.
By \cite[Appendix A]{relhypgrp}, there exists a 
 finite $G$-simplicial complex $\EG$ which is a universal space
 for proper actions such that all $\EP_{r}$ are
 embedded in $\EG$.  We can assume that $G$ is naturally embedded in
 the set of vertices of $\EG$ and $g_iP_{(i)}$ is embedded in
 $g_i\EP_{(i)}$.  

We define an embedding 
$\eta_i \colon g_i\EP_{(i)}\times \{0\} \hookrightarrow \EG$ 
as $\eta_i(x,0) = x$.
We define a space  $\EX$ in LCSH
by pasting
$g_i\EP_{(i)}\times [0,\infty)$ to $\EG$ by $\eta_i$ for all 
$i\in \N$. Thus we can write it as follows:
\begin{align*}
 \EX &:= \EG\cup \bigcup_{i\in \N} (g_i\EP_{(i)}\times [0,\infty)).
\end{align*}


In the rest of this section, we show that $\EX$ is a weak coarsening of
$\Xaug$, that is, $MX^*(\Xaug) \cong M^*(\EX)$. 
Here $M^*$ is the $K$-theory $K^*$ or the Alexander-Spanier cohomology
with compact support $H_c^*$.

We can regard $\EX$ as a metric simplicial complex in the sense of 
\cite[Definition 3.1]{MR1388312}. However, the bounded coarse structure
associated to this metric is not coarsely equivalent to
$\Xaug$. Therefore we equip $\EX$ with a pull-back coarse structure as
follows.

Let $\Xaug^{(0)}$ denote the $0$-skeletons of $\Xaug$. 
Since $G$ and $P_r$ for $r=1,\dots,k$ are embedded respectively into
$\EG$ and $\EP_r$, there is a natural embedding 
$\iota \colon \Xaug^{(0)} \hookrightarrow \EX$.
We define a left inverse $\varphi$ of $\iota$ as follows. 
We take a finite subcomplex
$\Delta\subset \EG$ containing a fundamental domain of $\EG$.  We may assume
that $\Delta_r:=\Delta\cap \EP_r$ contains a fundamental domain of $\EP_r$ for
$r=1,\dots,k$ without loss of generality. Then we can write $\EX$ as
follows.
\[
 \EX = \bigcup_{g\in G}g\Delta\cup 
 \bigcup_{i\in \N}\bigcup_{h\in P_{(i)}}g_ih\Delta_{(i)}\times (0,\infty).
\]
For every $x\in \EG$, we choose $g_x \in G$ such that $x\in
g_x\Delta$ and put $\varphi(x) := g_x\in \Gamma$. 
For $(x,t)\in g_ih\Delta_{(i)}\times (0,\infty)$, we put 
$\varphi(x,t):= (i,g_ih,[t]) \in \Horo(g_iP_{(i)})$
where $[t]$ denotes the integral part of $t$.
 We equip $\EX$ with a pullback coarse structure by $\varphi$. It is
easy to see that $\iota$ and $\varphi$ satisfy the conditions in
Proposition~\ref{prop:pull_back_coarse_str} and
Proposition~\ref{prop:pullback-proper}. Therefore $\EX$ is a proper coarse
space which is coarsely equivalent to
$\Xaug$. By the construction, $\EG$ and $\EP_i$ with the restricted coarse
structure are respectively coarsely equivalent to $G$ and $P_i$. Since
$G$ is finitely generated, $\EG$ has bounded geometry in the sense of
\cite[Definition 3.9]{MR2007488} and is uniformly contractible in the sense of
\cite[Definition 5.24]{MR2007488}, and so does $\EP_i$.

In Section 2.3 and Section 3.1 of \cite{relhypgrp}, the followings are defined.
\begin{enumerate}
 \item An anti-\v{C}ech system $\{\U_n\}_{n}$ of $\Xaug$.
 \item Coarsening maps $\alpha_n\colon \U_n \to \U_{n+1}$.
 \item Subsets $\mathcal{X}_n,\, \mathcal{Y}_n, \mathcal{Z}_n$ of
       $\U_n$.
 \item An anti-\v{C}ech system $\{E\U_n\}_{n}$ of $\EX$ in the sense of
       \cite[Definition 5.36]{MR2007488}.
 \item Simplicial maps $\phi_n\colon E\U_n\to \U_{n+1}$.
\end{enumerate}
A partition of
unity defines a continuous map $\psi\colon \EX\to E\U_1$. 
For $n\geq 3$, set
$F_n := \alpha_{n-1}\circ \dots \circ \alpha_{2} \circ \phi_1 \circ
\psi\colon \EX \to \abs{\U_n}$. 
We remark that the image of the restriction of $F_n$ to 
$\EG$ lies on $\abs{\mathcal{X}_n}$.
Then we have the following commutative diagram.
\begin{align*}
\xymatrix{
 \ar[r]& M^p(\abs{\U_{n+1}}) \ar[r]\ar[d]& M^p(\abs{\mathcal{X}_{n+1}}) 
 \oplus M^p(\abs{\mathcal{Y}_{n+1}}) \ar[r]\ar[d]&
 M^p(\abs{\mathcal{Z}_{n+1}}) \ar[r]\ar[d]& \\
 \ar[r]& M^p(\abs{\U_n}) \ar[r]\ar[d]& M^p(\abs{\mathcal{X}_n}) 
 \oplus M^p(\abs{\mathcal{Y}_n}) \ar[r]\ar[d]&
 M^p(\abs{\mathcal{Z}_n}) \ar[r]\ar[d]& \\
\ar[r] & M^p(\EX) \ar[r]& M^p(\EG) \ar[r]&
 M^p(\bigsqcup_{i\in \N}g_i\EP_{(i)}) \ar[r]&. 
}
\end{align*}
Here a map 
$M^p(\abs{\mathcal{X}_n}) \oplus M^p(\abs{\mathcal{Y}_n}) \to 
M^p(\EG)$ is given by $(a,b)\mapsto F_n^*(a)$.
Since $\EG$ and $\EP_i$ are of bounded geometry, 
 uniformly contractible coarse spaces, by the same
 way as in the proof of \cite[Proposition~3.8]{MR1388312}, taking
 subsequence if necessary, we can show that 
$\Im[M^*(\abs{\mathcal{X}_{n+1}})
\to M^*(\abs{\mathcal{X}_n})] \cong M^*(\EG)$ and 
$\Im[M^*(\abs{\mathcal{Z}_{n+1}})\to M^*(\abs{\mathcal{Z}_n})]\cong 
M^*(\bigsqcup_{i\in \N}g_i\EP_{(i)})$ 
for all $n\geq 1$.
By the same argument as in the proof of \cite[Lemma~2.7]{relhypgrp}, 
we can show that 
$\Im[M^*(\abs{\mathcal{Y}_{n+1}})\to M^*(\abs{\mathcal{Y}_n})]=
 0$. Thus by diagram chasing, we have 
$\Im[M^*(\abs{\U_{n+1}})\to M^*(\abs{\U_n})]\cong M^*(\EX)$ for all
 $n\geq 1$. 
Therefore we have $\limone M^*(\abs{\U_n}) = 0$
and $\varprojlim M^*(\abs{\U_n}) \cong M^*(\EX)$.
By axiom~(\ref{axiom:Milnor}), we have the 
following conclusion.
\begin{proposition}
 \label{prop:weak-coarsening}
The space $\EX$ is a weak
 coarsening of $\Xaug$, that is, $MX^*(\Xaug) \cong M^*(\EX)$.
\end{proposition}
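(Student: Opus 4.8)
The plan is to reduce the claim to a computation of the inverse system $\{M^*(\abs{\U_n})\}_n$ attached to the anti-\v{C}ech system $\{\U_n\}$ of $\Xaug$ from \cite{relhypgrp}. By the Milnor exact sequence of axiom~(\ref{axiom:Milnor}), $MX^*(\Xaug)$ sits in a short exact sequence $0 \to \limone M^{*-1}(\abs{\U_n}) \to MX^*(\Xaug) \to \varprojlim M^*(\abs{\U_n}) \to 0$, so it is enough to prove that $\limone M^{*-1}(\abs{\U_n}) = 0$ and that the canonical map $\varprojlim M^*(\abs{\U_n}) \to M^*(\EX)$ induced by the compatible family $F_n\colon \EX \to \abs{\U_n}$ is an isomorphism.

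To that end I would first recall, from Sections~2.3 and~3.1 of \cite{relhypgrp}, the decomposition of each nerve $\abs{\U_n}$ into subcomplexes $\abs{\mathcal{X}_n}$, $\abs{\mathcal{Y}_n}$, $\abs{\mathcal{Z}_n}$ with its Mayer--Vietoris sequence, together with the maps $F_n$ (obtained by composing a partition-of-unity map $\EX\to E\U_1$ with $\phi_1$ and the coarsening maps $\alpha_n$). Checking that the $F_n$ are compatible with the decompositions — the image of $\EG$ lying in $\abs{\mathcal{X}_n}$, and so on — yields the three-row commutative ladder of Mayer--Vietoris sequences displayed before the statement, whose bottom row is the long exact sequence of the decomposition $\EX = \EG \cup \bigcup_i (g_i\EP_{(i)}\times[0,\infty))$. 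The reason for introducing $\EX$ is precisely that, equipped with the pull-back coarse structure via $\varphi$, its pieces $\EG$ and $\EP_i$ are uniformly contractible and of bounded geometry while remaining coarsely equivalent to $G$ and the $P_i$.

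The substantive content is three stabilization statements, each proved after passing to a subsequence of $n$: the argument of \cite[Proposition~3.8]{MR1388312}, applied to the uniformly contractible bounded-geometry spaces $\EG$ and $\bigsqcup_i g_i\EP_{(i)}$, gives $\Im[M^*(\abs{\mathcal{X}_{n+1}})\to M^*(\abs{\mathcal{X}_n})]\cong M^*(\EG)$ and $\Im[M^*(\abs{\mathcal{Z}_{n+1}})\to M^*(\abs{\mathcal{Z}_n})]\cong M^*(\bigsqcup_i g_i\EP_{(i)})$ for all $n$, while the argument of \cite[Lemma~2.7]{relhypgrp}, exploiting that the collar region $\mathcal{Y}_n$ retracts off toward infinity, gives $\Im[M^*(\abs{\mathcal{Y}_{n+1}})\to M^*(\abs{\mathcal{Y}_n})] = 0$. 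I expect the main obstacle to be arranging all three stabilizations along one common subsequence and then transferring the conclusion to the total nerves $\abs{\U_n}$; for this I would run a five-lemma / diagram chase in the ladder of Mayer--Vietoris sequences to obtain $\Im[M^*(\abs{\U_{n+1}})\to M^*(\abs{\U_n})]\cong M^*(\EX)$ for every $n$, compatibly with the $F_n^*$. Once $\{M^*(\abs{\U_n})\}$ is pro-isomorphic to the constant system $M^*(\EX)$ it satisfies the Mittag-Leffler condition, so $\limone M^{*-1}(\abs{\U_n}) = 0$ and $\varprojlim M^*(\abs{\U_n})\cong M^*(\EX)$, and the Milnor sequence of axiom~(\ref{axiom:Milnor}) completes the argument.
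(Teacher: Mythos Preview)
Your proposal is correct and follows essentially the same route as the paper's proof: the paper likewise invokes the anti-\v{C}ech system and the decomposition $\abs{\mathcal{X}_n},\abs{\mathcal{Y}_n},\abs{\mathcal{Z}_n}$ from \cite{relhypgrp}, builds the maps $F_n$ via a partition of unity and $\phi_1$, uses \cite[Proposition~3.8]{MR1388312} and \cite[Lemma~2.7]{relhypgrp} for the three stabilization statements (after passing to a subsequence), then diagram-chases to $\Im[M^*(\abs{\U_{n+1}})\to M^*(\abs{\U_n})]\cong M^*(\EX)$ and concludes via the Milnor sequence.
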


We use the following notations introduced in \cite{relhypgrp}
\begin{align*}
 X_n &:= \Gamma \cup \bigcup_{ i > n}\Horo(g_iP_{(i)});\\
 X_\infty &:= \bigcap_{n> 0} X_n;\\
 EX_n &:= 
   \EG \cup \bigcup_{i > n}
     (g_i\EP_{(i)}\times [0,\infty));\\
 EX_\infty &:= \bigcap_{n> 0} EX_n.
\end{align*}
We remark that $X_0=\Xaug$, $X_\infty = \Gamma$, $EX_0 = EX$ and 
$EX_\infty = \EG$. We note that the definition of $X_n$ is
slightly different from the one in \cite{relhypgrp}, that is, the index
is shifted by one.
By the Mayer-Vietoris argument and
Proposition~\ref{prop:weak-coarsening}, we have the following
\begin{proposition}
\label{prop:X_n=EX_n}
 The following is commutative for all $n\in \N$
\begin{align*}
 \xymatrix{
MX^*(X_n)\ar[d] \ar[r]^\cong & M^*(EX_n) \ar[d]\\
MX^*(X_{n+1}) \ar[r]^\cong & M^*(EX_{n+1}).
}
\end{align*}
\end{proposition}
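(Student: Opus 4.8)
The content of the proposition is twofold: first, that the weak coarsening isomorphism of Proposition~\ref{prop:weak-coarsening} is available not only for $\Xaug=X_0$ but for every $X_n$, giving $MX^*(X_n)\cong M^*(EX_n)$; and second, that these isomorphisms are natural with respect to the inclusions $X_{n+1}\hookrightarrow X_n$ and $EX_{n+1}\hookrightarrow EX_n$. I would prove both at once, by induction on $n$, by comparing Mayer--Vietoris sequences; the base case $n=0$ is Proposition~\ref{prop:weak-coarsening} itself, since $X_0=\Xaug$ and $EX_0=\EX$.

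First I would record the two excisions. On the coarse side, $X_n=X_{n+1}\cup\Horo(g_{n+1}P_{(n+1)})$ with $X_{n+1}\cap\Horo(g_{n+1}P_{(n+1)})=g_{n+1}P_{(n+1)}$, and this is an $\omega$-excisive decomposition: a point lying within a bounded distance of both $X_{n+1}$ and the combinatorial horoball necessarily sits at bounded height in that horoball, hence within bounded distance of its base $g_{n+1}P_{(n+1)}$ --- this is the Groves--Manning geometry already exploited in \cite{relhypgrp}. The combinatorial horoball is coarsely flasque (see Definition~\ref{def:Horoball}), so $MX^*(\Horo(g_{n+1}P_{(n+1)}))=0$ by Lemma~\ref{lem:flasque}, and the Mayer--Vietoris sequence of axiom~(\ref{axiom:Mayer-Vietoris}) collapses to
\[
\cdots\to MX^p(X_n)\to MX^p(X_{n+1})\to MX^p(g_{n+1}P_{(n+1)})\to MX^{p+1}(X_n)\to\cdots,
\]
whose first map is restriction along $X_{n+1}\hookrightarrow X_n$. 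On the topological side, $EX_{n+1}$ is closed in $EX_n$ with open complement $EX_n\setminus EX_{n+1}=g_{n+1}\EP_{(n+1)}\times(0,\infty)\cong g_{n+1}\EP_{(n+1)}\times\R$, so the long exact sequence of this pair, combined with the suspension isomorphism $M^*(Y\times\R)\cong M^{*-1}(Y)$, yields
\[
\cdots\to M^p(EX_n)\to M^p(EX_{n+1})\to M^p(g_{n+1}\EP_{(n+1)})\to M^{p+1}(EX_n)\to\cdots,
\]
with first map restriction along $EX_{n+1}\hookrightarrow EX_n$.

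The two sequences must now be matched. For every $n$ the partition-of-unity (character) maps from $\EX$ and its subspaces into the relevant nerves, formed exactly as in Proposition~\ref{prop:weak-coarsening} and Section~\ref{subsec:weak-coars-relat}, produce comparison maps $MX^*(X_n)\to M^*(EX_n)$; since the anti-\v{C}ech data for $\Xaug$ restrict compatibly to $X_n$, to $X_{n+1}$ and to the base $g_{n+1}P_{(n+1)}$, these assemble into a ladder between the two displayed long exact sequences, the third column being the character isomorphism $MX^*(g_{n+1}P_{(n+1)})\cong M^*(g_{n+1}\EP_{(n+1)})$ of axiom~(\ref{axiom:character-map}) (legitimate because $g_{n+1}\EP_{(n+1)}$ is coarsely equivalent to $g_{n+1}P_{(n+1)}$ and is uniformly contractible of bounded geometry). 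By the inductive hypothesis the comparison map on $X_n$ is an isomorphism, so the five lemma makes the one on $X_{n+1}$ an isomorphism as well, and the commutativity of the leftmost square of the ladder is precisely the assertion of the proposition. The step that needs genuine care is this last one: that the comparison maps really do constitute a morphism of Mayer--Vietoris ladders, i.e.\ that they intertwine the connecting homomorphisms. This is a bookkeeping matter about the compatible choices of covers, coarsening maps and partitions of unity set up in \cite{relhypgrp} and in the construction of $\EX$; the decomposition of the nerves into the pieces $\mathcal{X}_n$, $\mathcal{Y}_n$, $\mathcal{Z}_n$ used there is precisely what makes it go through.
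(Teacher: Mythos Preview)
Your argument is correct and is precisely the ``Mayer--Vietoris argument'' the paper invokes in its one-line proof: induct from Proposition~\ref{prop:weak-coarsening} via the $\omega$-excisive decomposition $X_n=X_{n+1}\cup\Horo(g_{n+1}P_{(n+1)})$ on the coarse side and the corresponding decomposition of $EX_n$ on the topological side, using that both $MX^*$ of the horoball and $M^*(E\Horo(g_{n+1}P_{(n+1)}))$ vanish. The paper in fact uses exactly this pair of Mayer--Vietoris sequences later in the proof of Proposition~\ref{prop:boundary-map-for-Xn}, and the compatibility of the comparison maps with the decomposition is indeed encoded in the $\mathcal{X}_n,\mathcal{Y}_n,\mathcal{Z}_n$ bookkeeping from \cite{relhypgrp}, just as you say.
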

By the continuity of $M^*$, we have $\varinjlim M^*(EX_n)\cong
M^*(\EG)$. Since $\EG$ is a finite model, we have $MX^*(G)\cong
M^*(\EG)$. Hence Proposition~\ref{prop:X_n=EX_n} implies the following.
\begin{corollary}
\label{cor:limX_n=G}
 We have an isomorphism $\varinjlim MX^*(X_n)\cong MX^*(G)$.
\end{corollary}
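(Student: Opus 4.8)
The plan is to combine Proposition~\ref{prop:X_n=EX_n}, the continuity property of $M^*$, and the fact that $\EG$ is a \emph{finite} model for $\underline{E}G$. First I would fix the two commuting ladders supplied by Proposition~\ref{prop:X_n=EX_n}: for each $n$ the horizontal isomorphisms $MX^*(X_n)\xrightarrow{\cong} M^*(EX_n)$ intertwine the two towers of restriction maps $MX^*(X_n)\to MX^*(X_{n+1})$ and $M^*(EX_n)\to M^*(EX_{n+1})$. Passing to inductive limits over $n$, these give a natural isomorphism $\varinjlim_n MX^*(X_n)\cong \varinjlim_n M^*(EX_n)$, since the direct limit functor on $\Z$-graded abelian groups is exact and in particular preserves isomorphisms of direct systems.

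Next I would identify the right-hand side. Recall $EX_n = \EG\cup\bigcup_{i>n}(g_i\EP_{(i)}\times[0,\infty))$ and $EX_\infty=\bigcap_{n>0}EX_n=\EG$. As $n$ increases the attached half-open cylinders are progressively discarded, so $\{EX_n\}_n$ is a decreasing sequence of closed subspaces of $EX$ with intersection $\EG$; dually, $\EG = \varprojlim_n EX_n$ as a projective limit in LCSH in the evident sense (each $EX_{n+1}\hookrightarrow EX_n$ and the limit is the intersection). By the continuity property of $M^*$ on LCSH — property (iii) in the list preceding Definition~\ref{def:coarsening-of-cohomology-theory}, which both $K^*$ and $H_c^*$ satisfy — we get $\varinjlim_n M^*(EX_n)\cong M^*(\EG)$. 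This is precisely the isomorphism asserted in the sentence preceding the corollary.

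Finally I would close the loop using the character map. Because $\EG$ is a finite $G$-simplicial complex which is uniformly contractible and of bounded geometry (as noted in Section~\ref{subsec:weak-coars-relat}, since $G$ is finitely generated), axiom~(\ref{axiom:character-map}) gives an isomorphism $c\colon MX^*(\EG)\to M^*(\EG)$; and since $\EG$ is coarsely equivalent to $G$ with its canonical coarse structure, $MX^*(G)\cong MX^*(\EG)\cong M^*(\EG)$. Chaining the three identifications yields
\[
 \varinjlim_n MX^*(X_n)\;\cong\;\varinjlim_n M^*(EX_n)\;\cong\;M^*(\EG)\;\cong\;MX^*(G),
\]
and one checks that under these identifications the map $\varinjlim_n MX^*(X_n)\to MX^*(G)$ is the one induced by the coarse inclusions $X_\infty=\Gamma\hookrightarrow X_n$ together with the coarse equivalence $\Gamma\simeq G$, so the isomorphism is the natural one.

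The only genuinely delicate point is the passage $\varinjlim_n M^*(EX_n)\cong M^*(\EG)$: one must make sure the decreasing family $\{EX_n\}$ is a legitimate projective system in LCSH to which the continuity axiom applies (the maps there go the right way because $M^*$ is contravariant and $EX_{n+1}\subset EX_n$), and that the resulting comparison map agrees with the one coming from the inclusions $\EG\hookrightarrow EX_n$. Everything else is formal diagram chasing with the five lemma and the exactness of $\varinjlim$.
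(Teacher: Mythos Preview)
Your proposal is correct and follows essentially the same route as the paper: pass to inductive limits using Proposition~\ref{prop:X_n=EX_n}, apply the continuity property of $M^*$ to the decreasing family $\{EX_n\}$ with intersection $\EG$, and then use that $\EG$ is a finite model (uniformly contractible, bounded geometry, coarsely equivalent to $G$) so that $MX^*(G)\cong M^*(\EG)$. The paper compresses these steps into two sentences, while you spell out the naturality checks and the applicability of the continuity axiom more carefully, but the argument is the same.
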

\subsection{Coarse assembly map and its dual}
\label{subsec:coarse-assembly-map}
In this section, we give a proof of
Theorem~\ref{th:coarse_co-assembly-map}.
 The first statement is proved in \cite{relhypgrp}. The second statement
 is proved by a similar way. 
We suppose that $\famP$ satisfies the
 condition in Theorem~\ref{th:coarse_co-assembly-map}, that is, the
 coarse co-assembly map is an isomorphism for all $P\in \famP$.

By Proposition~\ref{prop:coarse_co-assemby_for_hyp}, the coarse
 co-assembly map $\mu^*\colon K_{*+1}(\rsHig(X_0)) \to KX^*(X_0)$
 is an isomorphism. Since $X_{n} = X_{n+1}\cup \Horo(g_{n+1}P_{(n+1)})$
 is an $\omega$-excisive 
 decomposition, by using the
Mayer-Vietoris sequences, we can show that for all $n\in \N$, the coarse
 co-assembly map $\mu^*\colon K_{*+1}(\rsHig(X_n)) \to KX^*(X_n)$
 is an isomorphism. 
Finally, by the continuity of the $K$-theory 
and Corollary~\ref{cor:limX_n=G} we have 
\begin{align*}
\xymatrix{
\varinjlim K_{*+1}(\rsHig(X_n)) \ar[d]^\cong\ar[r]^\cong& 
\varinjlim KX^*(X_n)\ar[d]^\cong\\
K_{*+1}(\rsHig(G)) \ar[r]& KX^*(G).
}
\end{align*}

The following is a somewhat converse statement of Theorem 1.1. However,
we assume nothing on universal spaces for proper actions.

\begin{proposition}\label{prop:coarse_assmap-for-parabolic-subgroup}
Let $G$ be a group which is hyperbolic relative to $\famP$.
\begin{enumerate}
\item \label{item:converse-assembly}
If $\mu_*:KX_*(G)\cong K_*(C^*(G))$, then
$\mu_*: KX_*(P)\cong K_*(C^*(P))$ for every $P\in\famP$.
\item \label{item:converse-co-assembly}
If $\mu^*:K_{*-1}(\rsHig(G))\cong KX^*(G)$, then
$\mu^*: K_{*-1}(\rsHig(P))\cong KX^*(P)$ for every $P\in\famP$.
\end{enumerate}
\end{proposition}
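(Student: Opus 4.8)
The plan is to run the proof of Theorem~\ref{th:coarse_co-assembly-map}(b) (Section~\ref{subsec:coarse-assembly-map}) backwards, using that the augmented space and the combinatorial horoballs are handled automatically while the hypothesis on $G$ supplies the missing input at the other end of the horoball filtration. I treat the second assertion; the first is identical with $K_{*+1}(\rsHig(-))$ and $KX^*(-)$ replaced by $KX_*(-)$ and $K_*(C^*(-))$, invoking \cite{relhypgrp} where I use the co-assembly facts below. Fix a finite generating set and an order of the cosets of $(G,\famP)$, giving $\Xaug$ and the filtration $X_0=\Xaug\supset X_1\supset\cdots$ with $\bigcap_{n}X_n=\Gamma$ coarsely equivalent to $G$ and $X_{n-1}=X_n\cup\Horo(g_nP_{(n)})$ an $\omega$-excisive decomposition, exactly as in Section~\ref{subsec:coarse-assembly-map}. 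Three inputs: $\Xaug$ is hyperbolic, so $\mu^*$ is an isomorphism for $X_0$ (Proposition~\ref{prop:coarse_co-assemby_for_hyp}); each $\Horo(g_nP_{(n)})$ is coarsely flasque, so $MX^*(\Horo(g_nP_{(n)}))=0$ for every generalized coarse cohomology theory $MX^*$ (Lemma~\ref{lem:flasque}); and $X_n\cap\Horo(g_nP_{(n)})$ is coarsely equivalent to $P_{(n)}$ with its canonical coarse structure, since the metric that a peripheral coset inherits (be it from $d_{\mathcal{S}}$ or from the horoball, which differ only by a logarithmic reparametrization) realizes that structure. Hence for each $n$ the Mayer--Vietoris sequences for $K_{*+1}(\rsHig(-))$ and for $KX^*(-)$, compatible with $\mu^*$ (Section~\ref{sec:coarse-co-assembly}), collapse to $\mu^*$-compatible long exact sequences
\begin{equation*}
\cdots\to MX^*(X_{n-1})\to MX^*(X_n)\to MX^*(P_{(n)})\to MX^{*+1}(X_{n-1})\to\cdots .
\end{equation*}

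It is cleanest to abbreviate $\Phi(X):=RK_*(\barHig(\mathcal{X}))$: the six-term exact sequence of~(\ref{exact:sHig}) shows that $\mu^*$ is an isomorphism for $X$ precisely when $\Phi(X)=0$, and $\Phi$ is again a generalized coarse cohomology theory (same Mayer--Vietoris sequences; and, lying in a long exact sequence with $KX^*$ and $K_{*+1}(\rsHig(-))$, the same continuity $\varinjlim_n\Phi(X_n)\cong\Phi(G)$ as in Corollary~\ref{cor:limX_n=G}). The three inputs above give $\Phi(X_0)=0$, $\Phi(\Horo(g_nP_{(n)}))=0$, and — since $\mu^*$ is an isomorphism for $G$ by hypothesis — $\varinjlim_n\Phi(X_n)\cong\Phi(G)=0$; and by the collapsed exact sequences the conclusion ``$\Phi(P_r)=0$ for $r=1,\dots,k$'' is equivalent to ``$\Phi(X_n)=0$ for all $n$''.

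The heart of the matter — and the step I expect to be the main obstacle — is this last reduction, because the obvious upward induction is circular: the exact sequence at stage $n$ passes from $\Phi(X_{n-1})=0$ to $\Phi(X_n)=0$ only after one knows $\Phi(P_{(n)})=0$, which is the conclusion. I would break the circle by using both ends of the filtration together. If some $\Phi(P_r)\ne0$, take $r$ minimal; walking up from $\Phi(X_0)=0$, the exact sequences give $\Phi(X_{r-1})=0$ but $\Phi(X_r)\cong\Phi(P_r)\ne0$. Since the cosets of a fixed peripheral are mutually isometric, the groups $MX^*(P_{(n)})$ and the boundary maps at stage $n$ depend only on the residue class $(n)$, so the nonzero class surviving in $\Phi(X_r)$ is governed, from then on, by a periodic pattern of exact sequences; the task is to show that such a class cannot be annihilated in the colimit, contradicting $\varinjlim_n\Phi(X_n)=0$. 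Granting this, $\Phi(X_n)=0$ for every $n$, whence the collapsed exact sequences immediately give $\Phi(P_{(n)})=0$, i.e.\ $\mu^*$ is an isomorphism for every $P_r$; the first assertion of the proposition follows by the identical argument.
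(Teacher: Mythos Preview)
Your reduction to the obstruction functor $\Phi$ is clean, and you correctly diagnose that the naive upward induction is circular. But the argument breaks exactly where you flag it. The claim that ``the boundary maps at stage $n$ depend only on the residue class $(n)$'' is not justified: while the cosets $g_nP_{(n)}$ for fixed $(n)$ are mutually isometric, the ambient spaces $X_{n-1}$ and $X_n$ into which they include genuinely change with $n$, so the Mayer--Vietoris boundary maps have no reason to be periodic. And even granting periodicity of the cofibers $\Phi(P_{(n)})$, nothing forces a nonzero class in $\Phi(X_r)$ to survive to the colimit: abstractly, a tower $0\to\mathbb{Z}\to 0\to\mathbb{Z}\to\cdots$ with all maps zero has $\varinjlim=0$, starts at $0$, and has nonzero periodic cofibers. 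So the step you label ``the task'' is not merely unfinished --- the proposed mechanism does not work. A secondary issue: the continuity $\varinjlim_n\Phi(X_n)\cong\Phi(G)$ you invoke rests on Corollary~\ref{cor:limX_n=G}, which is proved under the finite-$\underline{E}P_r$ hypothesis, whereas Proposition~\ref{prop:coarse_assmap-for-parabolic-subgroup} explicitly drops that assumption.

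The paper avoids the infinite filtration entirely with two Mayer--Vietoris steps. Fix $r$ and set $B:=\Gamma\cup\Horo(g_rP_r)$ and $A:=\Gamma\cup\bigcup_{i\neq r}\Horo(g_iP_{(i)})$; then $\Xaug=A\cup B$ is $\omega$-excisive with $A\cap B=\Gamma$. Since $\mu^*$ is an isomorphism for $\Xaug$ (hyperbolicity) and for $\Gamma\simeq G$ (hypothesis), the five lemma shows that the direct-sum map $\mu^*_A\oplus\mu^*_B$ is an isomorphism; but a direct sum of homomorphisms is an isomorphism only if each summand is, so $\mu^*$ is an isomorphism for $B$. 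Now decompose $B=\Gamma\cup\Horo(g_rP_r)$, again $\omega$-excisive with intersection coarsely equivalent to $P_r$: knowing $\mu^*$ for $B$ (just proved), for $\Gamma$ (hypothesis), and for the flasque horoball, the five lemma gives $\mu^*$ an isomorphism for $P_r$. The decisive idea you are missing is to choose the decomposition so that $A\cap B=\Gamma$, which feeds the hypothesis on $G$ into a single five-lemma application rather than into the tail of an infinite colimit.
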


\begin{proof}
We fix $r\in \{1,\dots,k\}$. Set 
$A := \Gamma \cup \bigcup_{i\neq r}\Horo(g_iP_i)$ 
 and $B:= \Gamma \cup \Horo(g_rP_r)$, Then $\Xaug=A\cup B$ and 
$B= \Gamma\cup \Horo(g_rP_r)$ are $\omega$-excisive decompositions. 
By the Mayer-Vietoris arguments for $A\cup B$, we have 
$\mu_*\colon KX_*(B)\to K_*(C^*(B))$ and 
$\mu^*\colon K_*(\rsHig(B))\to KX^*(B)$ are both
 isomorphisms. 
By the Mayer-Vietoris arguments for 
$B= \Gamma\cup \Horo(g_rP_r)$, we have 
$\mu_*\colon KX_*(\Gamma\cap \Horo(g_rP_r))\to K_*(C^*(\Gamma\cap
 \Horo(g_rP_r)))$ and 
$\mu^*\colon K_*(\rsHig(\Gamma\cap \Horo(g_rP_r))\to 
KX^*(\Gamma\cap \Horo(g_rP_r))$ are both
 isomorphisms. Here we use the fact that $\Horo(g_rP_r)$ is
 coarsely flasque. Since $\Gamma\cap \Horo(g_rP_r)$ is
 coarsely equivalent to $P_r$, we have the conclusion.
\end{proof}

\section{Corona of relatively hyperbolic groups}
\label{sec:bound-relat-hyperb}
In this section, we construct a corona of a relatively hyperbolic
group. Here we sketch the construction. Let $(G,\famP)$ be a
relatively hyperbolic group. We fix a generating set $\mathcal{S}$
of $G$ and an order $\{g_n\}_{n\in\N}$ of the
cosets of $(G,\famP)$ in the sense of Definition~\ref{def:order-of-cosets}. 
The Bowditch boundary $\partial \Xaug$ contains no
information on a maximal parabolic subgroup $P$ because all orbits by $P$
go to a single parabolic point $s\in \partial \Xaug$.
We remove the parabolic point $s$ and equip $\partial \Xaug
\setminus \{s\}$ with a coarse structure which is coarsely
equivalent to $P$.
Let $(W,\zeta)$ be a corona of $P$. Then $(W,\zeta)$ is also a corona of 
$\partial \Xaug\setminus \{s\}$. Thus we obtain a 
blown-up
$\partial \Xaug\setminus \{s\} \cup W$. Repeating this procedure to all
parabolic points, we obtain a corona $\partial X_\infty$ of $G$.

\subsection{A coarse structure on the complement of a parabolic point}
Let $G$ be a group which is hyperbolic relative to $\famP$.
For  $p,x,y\in \Xaug$,
we denote by $(x|y)_p$ the Gromov product 
\[
 (x|y)_{p} := \frac{1}{2}(d(x,p)+d(y,p) - d(x,y)).
\]
We denote by $[x,y]$ a geodesic connecting $x$ and $y$.
Since $\Xaug$ is hyperbolic in the sense of Gromov, there exists
$\delta_0>0$ such that every geodesic
triangle is $\delta_0$-thin, that is, for any $x,y,z \in \Xaug$, 
and for any $u\in [x,y]$ and $v\in [x,z]$, 
if $d(x,u)=d(x,v)\leq (y|z)_x$, then
$d(u,v)\leq \delta_0$. For details, see \cite[Chapter 2]{MR1086648}.

Two geodesic rays in $\Xaug$ are said to be equivalent if the Hausdorff
distance of their images is finite. For a geodesic ray $l\colon
[0,\infty)\to \Xaug$, we denote by $[l]$ the equivalent class of $l$.
We also write $l(\infty)=[l]$. 
The Gromov boundary of $\Xaug$, denoted by $\partial \Xaug$, consists of
equivalent classes of geodesic rays. 
It carries a natural topology and $\barXaug
:= \Xaug\cup \partial\Xaug$ is a compactification of $\Xaug$.
The Gromov product is extended on $\barXaug$ as follows. 
For $u,v\in \barXaug$ and $p \in\Xaug$, we put
\[
 (u|v)_{p} := \sup \liminf_{i,j\to \infty} (x_i|y_j)_p
\]
where the supremum is taken over all sequences $(x_i)_{i\geq1}$ and
$(y_i)_{i\geq1}$ tending to $u$ and $v$, respectively. 
For details, see \cite[Chapter~7]{MR1086648}.
Let $l_0, l_1\colon [0,\infty)\rightarrow
\Xaug$ be geodesic rays such that 
$p:=l_0(0) = l_1(0)$. Then it is easy to see that $(l_0(s)|l_1(t))_p$ is
non-decreasing for all $s,t\geq 0$, thus we have 
$([l_0]|[l_1])_p\geq (l_0(s)|l_1(t))_p$ for all $s,t\geq0$.
The following is known.
\begin{lemma}
\label{lem:approx}
In the above
 setting, there exists $t_0$ such that for all $s,t\geq t_0$, we have 
 $(l_0(s)|l_1(t))_p \geq ([l_0]|[l_1])_p - 3\delta_0.$
\end{lemma}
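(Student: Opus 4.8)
The plan is to reduce the assertion to an upper bound for the boundary Gromov product $([l_0]|[l_1])_p$ and to obtain that bound from the four-point inequality, tested against the rays themselves. As already noted, $(l_0(s)|l_1(t))_p$ is non-decreasing in $s$ and in $t$, so $L:=\lim_{s,t\to\infty}(l_0(s)|l_1(t))_p$ exists in $[0,\infty]$ and $([l_0]|[l_1])_p\ge L$. Since Gromov products are non-negative, the claim is vacuous when $([l_0]|[l_1])_p\le 3\delta_0$; and if $([l_0]|[l_1])_p=\infty$ then $[l_0]=[l_1]$ (a standard property of proper geodesic hyperbolic spaces), so $l_0$ and $l_1$ are asymptotic geodesic rays issuing from $p$, hence fellow-travel and $(l_0(s)|l_1(t))_p\to\infty$, which is the natural reading of the conclusion in that case. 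We may therefore assume $3\delta_0<([l_0]|[l_1])_p<\infty$, and by monotonicity it suffices to prove $L\ge ([l_0]|[l_1])_p-3\delta_0$.

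I would use two standard facts about the $\delta_0$-hyperbolic space $\Xaug$, both found in \cite{MR1086648}. (i) $\delta_0$-thinness yields the four-point inequality $(a|c)_p\ge \min\{(a|b)_p,(b|c)_p\}-\delta_0$ for all $a,b,c$ (a short computation: assuming $m:=(a|b)_p\le(b|c)_p$, thinness of the triangles $pab$ and $pbc$ puts the point of $[p,a]$ and the point of $[p,c]$ at distance $m$ from $p$ within $2\delta_0$ of each other, whence the estimate), and by iteration $(a|d)_p\ge\min\{(a|b)_p,(b|c)_p,(c|d)_p\}-2\delta_0$. (ii) Two sequences converging to the same point of $\partial\Xaug$ have $\liminf$ of their mutual Gromov products equal to $+\infty$; this is built into the construction of the Gromov boundary. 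Combining (i) and (ii) with the identity $(l_0(n)|l_0(s))_p=s$ for $n\ge s$ (valid since $l_0$ is a geodesic ray based at $p$), one sees that for any sequence $x_i\to[l_0]$ and any fixed $s$, after choosing $n\ge s$ with $(x_i|l_0(n))_p\ge s+\delta_0$ for all large $i$, the four-point inequality for $x_i,l_0(n),l_0(s)$ gives $(x_i|l_0(s))_p\ge s-\delta_0$ for all large $i$, hence $\liminf_i(x_i|l_0(s))_p\ge s-\delta_0$; symmetrically $\liminf_j(y_j|l_1(t))_p\ge t-\delta_0$ whenever $y_j\to[l_1]$.

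Next, for arbitrary sequences $x_i\to[l_0]$ and $y_j\to[l_1]$, I would apply the iterated four-point inequality to the quadruple $l_0(s),x_i,y_j,l_1(t)$ and pass to $\liminf_{i,j}$, obtaining
\begin{align*}
(l_0(s)|l_1(t))_p
&\ge \min\bigl\{\liminf_i(x_i|l_0(s))_p,\ \liminf_{i,j}(x_i|y_j)_p,\ \liminf_j(y_j|l_1(t))_p\bigr\}-2\delta_0\\
&\ge \min\bigl\{s-\delta_0,\ \liminf_{i,j}(x_i|y_j)_p,\ t-\delta_0\bigr\}-2\delta_0 .
\end{align*}
Letting $s,t\to\infty$ gives $L\ge\liminf_{i,j}(x_i|y_j)_p-2\delta_0$; taking the supremum over all such pairs of sequences and recalling that $([l_0]|[l_1])_p$ is by definition this supremum, we obtain $L\ge([l_0]|[l_1])_p-2\delta_0$, which is stronger than what is required.

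The only real obstacle is the bookkeeping of constants: one must check that $\delta_0$-thinness indeed gives the four-point inequality with constant $\delta_0$ (so that its two applications cost just $2\delta_0$), that (ii) and the derived estimate $\liminf_i(x_i|l_0(s))_p\ge s-\delta_0$ hold with these constants, and --- a minor point --- that the joint limit $L$ is well behaved. All of this is routine and is carried out in \cite{MR1086648}; since the argument even yields $2\delta_0$, the stated bound $3\delta_0$ holds comfortably. Everything else is elementary manipulation of lower limits.
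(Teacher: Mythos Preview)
Your argument is correct; indeed, as you note, it yields the sharper bound $L\ge([l_0]|[l_1])_p-2\delta_0$. The verification that $\delta_0$-thinness gives the four-point inequality with constant $\delta_0$ is fine, and your handling of the $\liminf$ (picking a single large $n$ along the ray so that $(x_i|l_0(n))_p\ge s+\delta_0$ eventually, which is licensed by the joint $\liminf$ being infinite) is legitimate.

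The paper's own proof, however, consists of a single sentence citing \cite[Remark~7.2.8]{MR1086648} (Ghys--de la Harpe), where this estimate on extended Gromov products is recorded. So there is no methodological difference to compare: you have simply written out the standard argument behind that reference, whereas the authors defer to it. Your version is self-contained and even sharpens the constant, which is harmless since only $3\delta_0$ is used downstream.
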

\begin{proof}
 The lemma follows immediately from \cite[Remark 7.2.8]{MR1086648}.
\end{proof}
The augmented space have the following {\itshape tautness}.

\begin{lemma}
 \label{lem:taut}
The augmented space $\Xaug$ is taut, in fact, 
for any vertex $x\in \Xaug$, there exists a bi-infinite geodesic 
$l\colon (-\infty,\infty)\to \Xaug$ such that $x$ lies on $l$.
\end{lemma}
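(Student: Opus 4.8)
The plan is to produce the bi‑infinite geodesic through $x$ by combining a homogeneity argument (for vertices of the Cayley graph) with an explicit construction along vertical columns (for vertices inside the horoballs), in each case reducing ``$x$ lies on a bi‑infinite geodesic'' to the existence of two geodesic rays issuing from $x$ whose Gromov product at $x$ vanishes. Indeed, if $l_0,l_1$ are geodesic rays with $l_0(0)=l_1(0)=x$ and $([l_0]\,|\,[l_1])_x=0$, then by the monotonicity of the Gromov product recorded just before Lemma~\ref{lem:approx} and non‑negativity of the product of points we get $(l_0(s)\,|\,l_1(t))_x=0$, i.e. $d(l_0(s),l_1(t))=s+t$, for all $s,t\ge 0$; hence the concatenation of $l_0$ reversed with $l_1$ is a bi‑infinite geodesic through $x$.

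First I would treat the case $x\in G$. Since $\famP\ne\emptyset$ and all elements of $\famP$ have infinite index in $G$, there are infinitely many peripheral cosets, hence infinitely many parabolic points, so $\partial\Xaug$ is infinite; as $\Xaug$ is a proper geodesic hyperbolic space with $\abs{\partial\Xaug}\ge 2$ it contains a bi‑infinite geodesic $\lambda$. Now any connected unbounded subset of $\Xaug$ that is disjoint from $G$ must lie inside a single horoball $\Horo(g_iP_{(i)})$, because the horoballs meet $\Gamma$ and one another only along level $0\subset G$; and every sequence going to infinity inside $\Horo(g_iP_{(i)})$ converges, in the Gromov compactification of $\Xaug$, to the parabolic point of that horoball, by Lemma~\ref{lem:parabolic-pt-of-horoball} together with the fact that a peripheral coset converges to its parabolic point. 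Hence $\lambda$ cannot be disjoint from $G$, since otherwise its two ideal endpoints would coincide. So $\lambda$ passes through some $g_0\in G$, and since $G$ acts on $\Xaug$ by isometries, transitively on the vertex set $G$, a suitable translate of $\lambda$ is a bi‑infinite geodesic through any prescribed $g\in G$.

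Next, for $x=(i,p,l)$ a vertex of a horoball with $l\ge 1$, sitting above $p\in C:=g_iP_{(i)}$, I would take $l_0$ to be the vertical ray up the column, $l_0(t)=(i,p,l+t)$, which is a geodesic ray converging to the parabolic point $s$ of $\Horo(C)$, and $l_1$ to descend the column to $p$ in $l$ steps and then follow a geodesic ray $\rho$ of $\Xaug$ issuing from $p$ with ideal endpoint $[\rho]\ne s$. Such a $\rho$ exists because $\partial\Xaug$ has points other than $s$, and since $[\rho]\ne s$ the ray $\rho$ escapes the coset $C$, i.e. $d(\rho(u),C)\to\infty$. It then remains to check that $l_1$ is a geodesic ray and that $([l_0]\,|\,[l_1])_x=0$; both follow from the single estimate
\[
 d_{\Xaug}\big((i,p,m),\rho(u)\big)=m+u \qquad\text{for all }m,u\ge 0 .
\]

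Proving this estimate is the step I expect to be the main obstacle. A geodesic from a column point $(i,p,m)$ to $\rho(u)$, which lies outside $\Horo(C)$ once $u$ is large, must descend to level $0$ of $\Horo(C)$ and exit through some $q\in C$; exiting at $q=p$ realizes the value $m+u$, and one has to rule out that exiting at some $q\ne p$ is strictly shorter. Here the hypotheses come together: one bounds $d_{\Xaug}\big((i,p,m),(i,q,0)\big)$ from below using the Groves–Manning description of distances in combinatorial horoballs \cite[Section~3]{MR2448064}, and one uses that $\rho$ escapes $C$ to conclude that $p$ lies coarsely on a geodesic from $q$ to $\rho(u)$, so that $d_{\Xaug}(q,\rho(u))\ge d_{\Xaug}(q,p)+u-O(\delta_0)$; combining these gives the required inequality. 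If obtaining the estimate exactly is inconvenient, I would instead prove it up to a fixed additive constant, which still yields geodesic segments of length $\ge 2n$ through $x$ for every $n$, and then extract a bi‑infinite geodesic through $x$ by an Arzelà–Ascoli argument, using that closed balls in the locally finite graph $\Xaug$ are compact.
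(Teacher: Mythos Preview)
Your Case~1 argument for $x\in G$ is correct and close in spirit to the paper's use of the $G$--action. The genuine gap is in Case~2. The identity $d_{\Xaug}\big((i,p,m),\rho(u)\big)=m+u$ fails for a generic geodesic ray $\rho$ from $p$ with $[\rho]\neq s_i$. Nothing prevents $\rho$ from first dipping into the interior of $\Horo(g_iP_{(i)})$ to travel a long horizontal distance cheaply before emerging at some $q\in C$ far from $p$ and then escaping; in that situation the shortest path from $(i,p,m)$ to $\rho(u)$ exits the horoball near $q$ rather than at $p$, so $l_1$ is not a geodesic ray at all. Your proposed inequality $d_{\Xaug}(q,\rho(u))\ge d_{\Xaug}(q,p)+u-O(\delta_0)$ is equivalent to $(q\mid\rho(u))_p\le O(\delta_0)$, and there is no reason for this Gromov product to be bounded: $q$ and $\rho(u)$ can lie on the same side of $p$. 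The Arzel\`a--Ascoli fallback does not close the gap either: an estimate up to an additive constant only makes $l_0\cup l_1$ a quasi-geodesic, so the genuine geodesics $[l_0(n),l_1(n)]$ pass within a bounded distance of $x$, and the limiting bi-infinite geodesic is merely near $x$, not through it.

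The paper sidesteps this by not taking $\rho$ generic. One chooses a second horoball $\Horo(g_jP_{(j)})$ disjoint from $\Horo(g_iP_{(i)})$ and a \emph{shortest} geodesic $\gamma\colon[0,a]\to\Xaug$ between them, with endpoints $p\in g_iP_{(i)}$ and $q\in g_jP_{(j)}$; extending $\gamma$ by the vertical rays above $p$ and $q$ yields an honest bi-infinite geodesic, since any competing path from $(i,p,m)$ to $(j,q,n)$ must exit the first horoball through level~$0$ (cost $\ge m$), cross between the two cosets (cost $\ge a$ by minimality of $\gamma$), and climb to level $n$ (cost $\ge n$). This geodesic already contains $(i,p,m)$ for every $m$, and a left translation by an appropriate $h\in G$ stabilizing the coset $g_iP_{(i)}$ carries it to a bi-infinite geodesic through any prescribed $(i,g,n)$; the case $x\in G$ is subsumed by $n=0$. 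Your framework becomes correct if you replace the arbitrary $\rho$ by exactly this ray $\gamma$ followed by the vertical ray above $q$, but at that point you have reproduced the paper's argument.
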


\begin{proof}
Take any vertex $(i,g,n)\in \Xaug$. (See Remark~\ref{rem:vertices}, we
 often use this notation.)
We choose $j\in \N$ such that 
$\Horo(g_iP_{(i)})\cap \Horo(g_{j}P_{(j)}) = \emptyset$. 
Then we choose a shortest geodesic
$\gamma:[0,a]\to \Xaug$ connecting $\Horo(g_iP_{(i)})$ and 
$\Horo(g_{j}P_{(j)})$. We remark that its end points $p:=l(0)$ and $q:=l(a)$ 
lie respectively on $g_iP_{(i)}$ and $g_{j}P_{(j)}$. 
We take the vertical ray $\gamma_{-}\colon [0,\infty)\to \Xaug$ from $p$ to 
the parabolic point $s_i$ of $\Horo(g_iP_{(i)})$. Also we take the
vertical ray $\gamma_{+}\colon [0,\infty)\to \Xaug$ from
$q$ to parabolic point point $s_{j}$ of $\Horo(g_{j}P_{(j)})$. Then 
$\gamma_{-}([0,\infty))\cup l([0,a])\cup \gamma_{+}([0,\infty))$ 
is a bi-infinite geodesic from $s_i$ 
to $s_{j}$. 
There exists $h\in G$ such that $(i,g,n)=(i,hp,n)$. 
Then $(i,g,n)$ lies on the bi-infinite geodesic
$h(\gamma_{-}([0,\infty))\cup l([0,a])\cup \gamma_{+}([0,\infty)))$.
\end{proof}

Let $N_{\delta_0}$ be an integer greater than $\delta_0+1$. 
We fix $i\in \N$ and put 
$X^i:= \Gamma\cup \bigcup_{j\neq i}\Horo(g_jP_{(j)})$. Set
$e_i:=(i,g_i,N_{\delta_0})$ as in
remark~\ref{rem:vertices}.
There exists a metric $\rho_i$ on $\partial \Xaug$ 
which is compatible with the topology and satisfying that there
exists constants $A,C>0$ such that for any $u,v \in \partial \Xaug$, we
have $A^{-1}e^{-C(u|v)_{e_i}}\leq \rho_i(u,v) \leq Ae^{-C(u|v)_{e_i}}$. 


Let $s_i$ be the parabolic point of the combinatorial horoball 
$\Horo(g_iP_{(i)})$.
Set $\hat{P}_i := \partial \Xaug \setminus \{s_i\}$. We equip $\hat{P}_i$ with
the subspace topology, as a subspace of $\partial \Xaug$. 
Let $l\colon \R_{\geq 0} \to \Xaug$ be a geodesic ray such that
$l(0)=e_i$ and $l(\infty)\neq s_i$. We define
$n_i(l) := \max\{n:l(n)\in g_iP_{(i)}\}$.
By \cite[Lemma~3.10]{MR2448064}, we can assume that geodesic segments 
$l([0,\infty))\cap \Horo(g_iP_{(i)})$ consist of at most two vertical
segments and a single  horizontal segment of length at most 3. 




\begin{lemma}
\label{lem:find-geodesic}
 For any vertex $x\in X^i$, 
there exists a geodesic ray 
 $l_x\colon [0,\infty)\to \Xaug$ and $t_x\in [0,\infty)$
 such that $l_x(0) = e_i$, 
 $l_x(\infty) \neq s_i$, $l_x(t_x)\in X^i$
 and $d(x, l_x(t_x))\leq 2\delta_0$.
\end{lemma}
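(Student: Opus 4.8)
The plan is to realise $l_x$ as a geodesic ray issuing from $e_i$ and converging to a boundary point $\xi\in\partial\Xaug\setminus\{s_i\}$ chosen so that $x$ lies, up to bounded error, on this ray, and then to read off $t_x$ from the thin‑triangle inequality. First I would invoke tautness (Lemma~\ref{lem:taut}) to fix a bi‑infinite geodesic $m\colon(-\infty,\infty)\to\Xaug$ with $x=m(0)$, and write $\xi_+=m(+\infty)$, $\xi_-=m(-\infty)$ for its two distinct endpoints. Since $x$ lies on the geodesic line $m$, the product $(\xi_+|\xi_-)_x$ vanishes on the defining sequences $m(\pm n)$, so by $\delta_0$‑hyperbolicity at the point $x$ at least one of $(e_i|\xi_+)_x$ and $(e_i|\xi_-)_x$ is at most $\delta_0$; let $\xi\in\{\xi_+,\xi_-\}$ realise this. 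The one genuinely geometric point here is that $\xi\neq s_i$, and this is forced by the structure of the combinatorial horoball $\Horo(g_iP_{(i)})$: because $x\in X^i$ avoids the interior of $\Horo(g_iP_{(i)})$, a geodesic from $x$ to $e_i=(i,g_i,N_{\delta_0})$ and a geodesic from $x$ to a vertex $(i,g_i,n)$ with $n$ large both enter $\Horo(g_iP_{(i)})$ through its level‑$0$ part $g_iP_{(i)}$ and then ascend along a common column, so by \cite[Lemma~3.10]{MR2448064} they agree at least up to level $N_{\delta_0}$; letting $n\to\infty$ gives $(e_i|s_i)_x\ge N_{\delta_0}-O(1)>\delta_0$. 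Hence the endpoint with Gromov product $\le\delta_0$ cannot be $s_i$, i.e. $\xi\neq s_i$.

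Next I would take $l_x\colon[0,\infty)\to\Xaug$ to be any geodesic ray from $e_i=l_x(0)$ to $\xi=l_x(\infty)$; such a ray exists since $\Xaug$ is a proper geodesic hyperbolic space, and the requirements $l_x(0)=e_i$, $l_x(\infty)=\xi\neq s_i$ hold. Put $\sigma=[e_i,x]$, $L=d(e_i,x)$, $\beta=(e_i|\xi)_x\le\delta_0$ and $\alpha=(x|\xi)_{e_i}$, so $\alpha+\beta=L$. Applying $\delta_0$‑thinness to the triangle with vertices $e_i$, $x$, $\xi$ and sides $\sigma$, $m$ (from $x$ to $\xi$, suitably reparametrised), $l_x$ — reading it off at the vertex $e_i$, or, to stay with finite vertices, approximating $\xi$ by points of $m$ together with Lemma~\ref{lem:approx} — one obtains $d(\sigma(\alpha),l_x(\alpha))\le\delta_0$; since $d(x,\sigma(\alpha))=L-\alpha=\beta\le\delta_0$, this yields $d(x,l_x(\alpha))\le 2\delta_0$.

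It remains to guarantee that the relevant point of $l_x$ lies in $X^i$, and for this I would set $t_x:=\alpha$. Write $n_i(l_x)=\max\{n:l_x(n)\in g_iP_{(i)}\}$; beyond parameter $n_i(l_x)$ the ray $l_x$ cannot re‑enter $\Horo(g_iP_{(i)})$ without re‑crossing $g_iP_{(i)}$, so $l_x(t)\in X^i$ for every $t\ge n_i(l_x)$. Thus it suffices to check $\alpha\ge n_i(l_x)$. Here the horoball combinatorics enter once more: by \cite[Lemma~3.10]{MR2448064} the portion of $l_x$ inside $\Horo(g_iP_{(i)})$ is a short horizontal move at level $N_{\delta_0}$ followed by a vertical descent to level $0$, so $n_i(l_x)=N_{\delta_0}+O(1)$, while $\sigma=[e_i,x]$ leaves the horoball through the same column (again because $x\in X^i$ and $\xi$ is reached through that column); hence $\sigma$ and $l_x$ share an initial arc of length $\ge n_i(l_x)-O(1)$, giving $\alpha=(x|\xi)_{e_i}\ge n_i(l_x)$ after, if necessary, replacing $N_{\delta_0}$ by a bounded amount larger or counting the horoball geodesic more carefully. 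With $t_x=\alpha$ we then have $l_x(t_x)\in X^i$ and $d(x,l_x(t_x))\le 2\delta_0$, as wanted.

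The step I expect to be the real obstacle is exactly the control inside the combinatorial horoball $\Horo(g_iP_{(i)})$: both the lower bound $(e_i|s_i)_x\gtrsim N_{\delta_0}$, which is what lets the ray avoid the parabolic point $s_i$, and the inequality $\alpha\ge n_i(l_x)$, which is what places the near point of $l_x$ in $X^i$; tracking the additive constants there so that the bound comes out as the stated $2\delta_0$ is the only delicate bookkeeping. Everything else is routine $\delta_0$‑hyperbolic geometry together with Lemma~\ref{lem:taut}.
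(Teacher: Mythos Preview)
Your overall plan—tautness to get a bi-infinite geodesic through $x$, take a ray from $e_i$ to one of its endpoints, and use a thin-triangle argument to locate a point on that ray within $2\delta_0$ of $x$—is exactly the paper's. The two places you flag as delicate are precisely where your argument diverges from the paper's, and in both the paper replaces your bookkeeping by a one-line observation.

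For $\xi\neq s_i$: your estimate $(e_i|s_i)_x\ge N_{\delta_0}-O(1)$ rests on the claim that geodesics from $x$ to $e_i$ and from $x$ to $(i,g_i,n)$ ``ascend along a common column''; there is no reason they should enter $g_iP_{(i)}$ at the same vertex, so this is unjustified, and in any case the $O(1)$ is a multiple of $\delta_0$, which the chosen $N_{\delta_0}>\delta_0+1$ need not dominate. The paper argues by contradiction \emph{after} already producing $t_x'$ with $d(x,l_x(t_x'))\le\delta_0$: if $l_x(\infty)=s_i$ then the ray from $e_i$ to $s_i$ stays entirely in $\Horo(g_iP_{(i)};[N_{\delta_0},\infty))$, so $x\in X^i$ would lie within $\delta_0$ of a point at level $\ge N_{\delta_0}$, impossible since $N_{\delta_0}>\delta_0$.

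For $l_x(t_x)\in X^i$: your claim $\alpha\ge n_i(l_x)$ is not true in general. If $x\in g_iP_{(i)}$ is far from $g_i$, then both $n_i(l_x)$ and $\alpha=(x|\xi)_{e_i}$ are approximately $d(e_i,x)$, but $\alpha$ falls short of $n_i(l_x)$ by roughly $(e_i|\xi)_x\le\delta_0$; enlarging $N_{\delta_0}$ does not close this gap. The paper does not attempt to prove this inequality. Instead it sets $t_x=t_x'$ when $l_x(t_x')\in X^i$, and otherwise sets $t_x=n_i(l_x)$: in the second case $l_x(t_x')$ lies in the horoball at some level $h\ge 1$, the condition $x\in X^i$ with $d(x,l_x(t_x'))\le\delta_0$ forces $h\le\delta_0$, and the vertical-horizontal-vertical shape of $l_x$ inside the horoball then gives $d(l_x(t_x'),l_x(n_i(l_x)))\le\delta_0$, hence $d(x,l_x(t_x))\le 2\delta_0$.

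So the strategy is right; the two ``obstacles'' you anticipate are artefacts of the route you chose through them, and both dissolve with the simpler moves above.
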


\begin{proof}
By Lemma~\ref{lem:taut}, there exists a 
 geodesic $l\colon (-\infty,\infty) \to \Xaug$ and 
$s_x\in (-\infty,\infty)$ such that $x=l(s_x)$.
Let $l_1,l_2\colon [0,\infty)\to \Xaug$ be geodesic rays such that
 $l_1(0)=l_2(0)=e_i$, $l_1(\infty)=l(-\infty)$, and 
$l_2(\infty)=l(\infty)$. We consider a geodesic triangle 
$l_1([0,\infty))\cup l((-\infty,\infty))\cup l_2([0,\infty))$. We can
 assume without loss of generality that $l(s_x)$ is contained in a 
$\delta_0$-neighborhood of $l_1([0,\infty))$. 
Therefore there exists 
$t_x'\in [0,\infty)$ such that $d(l(s_x),l_1(t_x'))\leq \delta_0$. 
Suppose that 
$l_1(\infty) = s_i$. Then 
$l_1([0,\infty))\subset \Horo(g_iP_{(i)};[N_{\delta_0},\infty))$, so $x$
 lies on the $\delta_0$-neighborhood of
 $\Horo(g_iP_{(i)};[N_{\delta_0},\infty))$. This contradicts 
 that $N_{\delta_0}>\delta_0$. Thus $l_1(\infty)\neq s_i$.
Set $l_x:= l_1$. Then we have $d(x,l_x(t_x'))\leq \delta_0$.
If $l_x(t_x')\in X^i$, then set $t_x := t_x'$, otherwise set
 $t_x:=n_i(l_x)$. 
Then $d(x,l_x(t_x))\leq 2\delta_0$. 
\end{proof}

In the rest of this section, we fix the following notations.
For any vertex $x\in X^i$, 
we choose a geodesic ray $l_x$ and $t_x\in [0,\infty)$ satisfying the
statement of Lemma~\ref{lem:find-geodesic}. 
For any point $u\in \hat{P}_i$, we
choose a geodesic ray $l^u$ such that $l^u(0)=e_i$ and $u=[l^u]$.  
%
\begin{lemma}
\label{lem:txsx}
 Let $x\in X^i$ be a vertex. Set $u=[l_x]$. There exists $s_x\in [0,\infty)$ 
 such that $l^u(s_x)\in X^i$ and $d(x,l^u(s_x))\leq 4\delta_0$.
\end{lemma}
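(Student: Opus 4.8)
The plan is to exploit that $l_x$ and $l^u$ are two geodesic rays issuing from the common vertex $e_i$ and both converging to the boundary point $u:=[l_x]$ — which lies in $\hat{P}_i$ since $l_x(\infty)\ne s_i$, so that $l^u$ is indeed available — and therefore fellow-travel; then I would use the combinatorial structure of geodesics inside the horoball $\Horo(g_iP_{(i)})$ to correct the resulting point on $l^u$ so that it lands in $X^i$.

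First I would prove the fellow-traveling estimate $d(l_x(t),l^u(t))\le\delta_0$ for every $t\ge0$. Fix $t$. Since $l_x(R)$ and $l^u(R')$ both tend to $u$, the Gromov product $(l_x(R)\mid l^u(R'))_{e_i}$ tends to infinity as $R,R'\to\infty$ (two geodesic rays from a common point converging to the same boundary point; compare Lemma~\ref{lem:approx}), so we may choose $R,R'$ with $(l_x(R)\mid l^u(R'))_{e_i}\ge t$. Applying $\delta_0$-thinness to the geodesic triangle with vertices $e_i,l_x(R),l^u(R')$ and sides $l_x|_{[0,R]}$, $l^u|_{[0,R']}$, and using $d(e_i,l_x(t))=d(e_i,l^u(t))=t$, we obtain $d(l_x(t),l^u(t))\le\delta_0$. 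Combining with Lemma~\ref{lem:find-geodesic} this yields $d(x,l^u(t_x))\le d(x,l_x(t_x))+d(l_x(t_x),l^u(t_x))\le 2\delta_0+\delta_0=3\delta_0$.

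If $l^u(t_x)\in X^i$, then $s_x:=t_x$ already works. Otherwise $l^u(t_x)$ is a vertex of $\Horo(g_iP_{(i)})$ of the form $(i,p,\ell)$ with $\ell\ge1$; since $l_x(t_x)\in X^i$ lies at level $0$ or outside this horoball and any edge-path from level $\ell$ to such a vertex contains at least $\ell$ vertical edges, we get $\ell\le d(l^u(t_x),l_x(t_x))\le\delta_0$. Now $l^u$ starts at $e_i$, which sits at level $N_{\delta_0}$, and converges to $u\ne s_i$, so by the description of geodesics in combinatorial horoballs (Groves--Manning~\cite{MR2448064}, Lemma~3.10) the part of $l^u$ lying in $\Horo(g_iP_{(i)})$ is a single arc made of at most two vertical segments and one horizontal segment of length at most $3$; moreover the horizontal stretch of such an arc occurs at a level at least as high as the higher endpoint of the arc, hence at level $\ge N_{\delta_0}$. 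Since $\ell\le\delta_0<N_{\delta_0}$, the point $l^u(t_x)$ lies strictly below that horizontal stretch, so it sits on the final monotone descent of the arc; consequently $l^u$ reaches a level-$0$ vertex, which lies in $g_iP_{(i)}\subseteq\Gamma\subseteq X^i$, after exactly $\ell$ further steps. Taking $s_x:=t_x+\ell$ we get $l^u(s_x)\in X^i$ and $d(x,l^u(s_x))\le d(x,l^u(t_x))+\ell\le 3\delta_0+\delta_0=4\delta_0$.

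I expect the delicate point to be the final step: making sure $l^u$ cannot climb back up inside $\Horo(g_iP_{(i)})$ after time $t_x$ — which would force it across the horizontal stretch a second time and spoil the constant — and that the bound comes out as exactly $4\delta_0$ rather than $4\delta_0+O(1)$. This is precisely where the combinatorial rigidity of horoball geodesics and the choice $N_{\delta_0}>\delta_0+1$ enter, the latter ensuring that the small height $\ell\le\delta_0$ keeps $l^u(t_x)$ below the single turning segment. The fellow-traveling estimate and the use of Lemma~\ref{lem:find-geodesic} are routine by comparison.
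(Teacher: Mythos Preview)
Your proof is correct and follows essentially the same route as the paper's: both use that the two rays $l_x$ and $l^u$ from $e_i$ to $u$ are $\delta_0$-close, take the resulting point on $l^u$ near $l_x(t_x)$, and if it happens to lie inside the $i$-th horoball, slide along $l^u$ to the exit point at level~$0$ (the paper writes this as $s_x:=n_i(l^u)$, which coincides with your $t_x+\ell$). The only differences are cosmetic: you establish synchronous fellow-traveling $d(l_x(t),l^u(t))\le\delta_0$ via a thin-triangle argument, while the paper invokes the Hausdorff bound to get $d(l_x(t_x),l^u(s'_x))\le\delta_0$ for some $s'_x$; and you spell out the horoball combinatorics (Groves--Manning Lemma~3.10 and the role of $N_{\delta_0}>\delta_0$) that the paper leaves implicit in its one-line justification of the final bound.
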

\begin{proof}
 The Hausdorff distance of $l_x$ and $l^u$ is at most $\delta_0$. Thus
 there exists $s'_x\in [0,\infty)$ such that 
$d(l_x(t_x),l^u(s'_x))\leq \delta_0$. If $l^u(s'_x) \in X^i$, we put 
$s_x = s'_x$, otherwise we put $s_x=n_i(l_u)$. Then by
 Lemma~\ref{lem:find-geodesic}, $d(x,l^u(s_x)) \leq 4\delta_0$.
\end{proof}


\begin{lemma}
\label{lem:geodesic-contraction}
 Let $l_1\colon [0,a]\rightarrow \Xaug$ and $l_2\colon [0,b]\rightarrow
 \Xaug$ be geodesics such that $l_1(0) = l_2(0) = e_i$, and both of 
$l_1(a)$ and $l_1(b)$ lie on $X^i$. Then
\[
 d(l_1(n_i(l_1)),l_2(n_i(l_2))) \leq d(l_1(a),l_2(b)) + 2\delta_0.
\]
\end{lemma}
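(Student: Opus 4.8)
The plan is to use $\delta_0$-thinness of the geodesic triangle with vertices $e_i$, $A:=l_1(a)$ and $B:=l_2(b)$, together with the normal form of geodesics in combinatorial horoballs coming from \cite[Lemma~3.10]{MR2448064}. Set $D:=d(A,B)$ and let $g:=(A\,|\,B)_{e_i}$ be the Gromov product, so that $2g=a+b-D$; by the triangle inequality $0\le g\le\min\{a,b\}$, and $\delta_0$-thinness of the triangle gives $d(l_1(t),l_2(t))\le\delta_0$ whenever $0\le t\le g$. Write $p_1:=l_1(n_i(l_1))$ and $p_2:=l_2(n_i(l_2))$; both lie in $g_iP_{(i)}$, that is, at level $0$ of $\Horo(g_iP_{(i)})$, and these are the two vertices to be compared.

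First I would record the following consequence of the horoball geometry. Since $l_j(0)=e_i$ sits at level $N_{\delta_0}$ in $\Horo(g_iP_{(i)})$ while $p_j$ sits at level $0$, and $l_j$ leaves $\Horo(g_iP_{(i)})$ for good at $p_j$, Groves--Manning's description \cite[Lemma~3.10]{MR2448064} allows us to assume that $l_j$ restricted to $[0,n_i(l_j)]$ stays inside $\Horo(g_iP_{(i)})$ and has the standard shape: a vertical ascent out of $e_i$, one horizontal segment of length at most $3$, then a vertical descent to $p_j$. Because $N_{\delta_0}>\delta_0$, the ascending part and the horizontal part lie at level $>\delta_0$, so any vertex of $l_j|_{[0,n_i(l_j)]}$ at level $\le\delta_0$ lies on the final descent, hence in the column of $p_j$, and is therefore within distance $\delta_0$ of $p_j$.

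With this in hand I would split into two cases. If $g\le n_i(l_1)$ and $g\le n_i(l_2)$, then $l_1(g)$ and $l_2(g)$ both precede the exit vertices, and
\[
 d(p_1,p_2)\le d(p_1,l_1(g))+d(l_1(g),l_2(g))+d(l_2(g),p_2)\le(n_i(l_1)-g)+\delta_0+(n_i(l_2)-g);
\]
since $n_i(l_1)\le a$ and $n_i(l_2)\le b$, the right-hand side is at most $a+b-2g+\delta_0=D+\delta_0$. Otherwise, after possibly interchanging $l_1$ and $l_2$, we may assume $n_i(l_1)\le n_i(l_2)$, so $n_i(l_1)<g$; then $\delta_0$-thinness gives $d(p_1,l_2(n_i(l_1)))\le\delta_0$, so $l_2(n_i(l_1))$ is a vertex of $l_2|_{[0,n_i(l_2)]}$ at level $\le\delta_0$, and the paragraph above yields $d(l_2(n_i(l_1)),p_2)\le\delta_0$; hence $d(p_1,p_2)\le 2\delta_0$. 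In either case $d(p_1,p_2)\le D+2\delta_0$, which is the assertion.

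The step I expect to require the most care is the horoball computation of the second paragraph: one has to invoke the normal form for geodesics in $\Horo(P)$ precisely enough to be sure that a geodesic issuing from the fixed deep basepoint $e_i$ and terminating at level $0$ can be taken to ascend first, make a single bounded horizontal move, and then descend straight to its exit vertex, and that the hypothesis $N_{\delta_0}>\delta_0$ rules out the geodesic reaching level $\le\delta_0$ anywhere except on that final descent. Everything else is bookkeeping with the thin-triangle inequality and the identity $d(l_j(s),l_j(t))=|s-t|$ along a geodesic.
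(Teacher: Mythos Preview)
Your argument is correct, but it is genuinely different from the paper's. The paper does \emph{not} work with the triangle on $e_i$, $A$, $B$ at all. Instead it builds a new triangle whose apex $p$ sits deep inside $\Horo(g_iP_{(i)})$: one picks an integer $r$ so that at level $r$ the columns of $x'=l_1(n_i(l_1))$ and $y'=l_2(n_i(l_2))$ are horizontally adjacent, takes $p$ at level $r$ between them, and defines $[p,x]$ (resp.\ $[p,y]$) as the concatenation of a horizontal edge, a vertical segment down to $x'$ (resp.\ $y'$), and the tail $l_1|_{[n_i(l_1),a]}$ (resp.\ $l_2|_{[n_i(l_2),b]}$). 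Thinness of this auxiliary triangle then gives the bound directly, by comparing $r+1=d(p,x')=d(p,y')$ with $(x|y)_p$. So the paper uses the \emph{outer} portions $l_j|_{[n_i(l_j),\cdot]}$ to build its triangle, while you use the \emph{inner} portions $l_j|_{[0,n_i(l_j)]}$ and the original triangle at $e_i$.

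What each approach buys: your argument keeps the natural triangle and is perhaps more transparent, but it leans on the normal form of the horoball segment $l_j|_{[0,n_i(l_j)]}$ and on the hypothesis $N_{\delta_0}>\delta_0$ to force any low-level vertex onto the final descending column. The paper's argument avoids analyzing that initial segment entirely --- it never looks at what $l_j$ does between $e_i$ and its exit point --- at the cost of constructing an auxiliary apex and checking that the concatenated paths are geodesics. The step you flag as most delicate (the normal-form claim, and in particular that the horizontal piece sits at level $\ge N_{\delta_0}$, never below) is exactly the extra verification your route requires and the paper's route sidesteps; with that granted, both arguments yield the same constant $2\delta_0$.
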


\begin{proof}
Set $x:=l_1(a)$, $y:=l_2(b)$, $x':=l_1(n_i(l_1))$ and $y':=l_2(n_i(l_2))$.
Here we remark that $x',y'\in g_iP_{(i)}$.
Let $r$ be an integer such that $d((i,x',r),(i,y',r))\leq 1$. We choose
 $g_{xy}$ such that 
$d((i,x',r),(i,g_{xy},r)) = d((i,y',r),(i,g_{xy},r)) = 1$. 
Set $p:= (i,g_{xy},r)$. 
We define $[p,x]$ as a geodesic consisting of a
 horizontal edge $\{(i,g_{xy},r),(i,x',r)\}$, a vertical geodesic 
$[(i,x',r),(i,x',0)]$ and $l_1([n_i(l_1),a])$. We also define a geodesic $[p,y]$
 similarly. 
We consider a geodesic triangle 
$[p,x] \cup [x,y] \cup [p,y]$, which is $\delta_0$-thin. Here we remark
 that $d(p,x') = d(p,y')=r+1$. 
If $r+1\leq (x|y)_p$, 
then 
\[
 d(x', y') \leq \delta_0.
\]
If $r+1> (x|y)_p$, then $d(x,x')\leq (y|p)_x$ since 
$(x|y)_p + (y|p)_x = d(p,x)$. Therefore, for a point $z\in [x,y]$ with
 $d(x,z) = d(x,x')$, we have $d(x',z) \leq \delta_0$.
By the same reason, for a point $w\in [x,y]$ with $d(y,w)=d(y,y')$, 
we have $d(y',w)\leq \delta_0$.
Since $d(z,w)\leq  d(x,y)$, we have
\[
  d(x',y') \leq d(x,y) + 2\delta_0.
\]
\end{proof}
We define a map $L_i\colon \hat{P}_i \to g_iP_{(i)}$ and 
$F_i\colon X^i\to \hat{P}_i$ as follows:
\begin{align*}
L_i(u) &:= l^u(n_i(l^u)) \textrm{ for } u\in \hat{P}_i;\\
F_i(x) &:= [l_x]\textrm{ for } x\in X^i.
\end{align*}
\begin{lemma}
 \label{lem:close}
 For any $x\in g_iP_{(i)}$, we have $d(x,L_i(F_i(x)))\leq 6\delta_0$.
\end{lemma}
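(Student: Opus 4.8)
The plan is to compare the geodesic ray that defines $L_i(F_i(x))$ with a geodesic joining $e_i$ to $x$, and then to use the contraction estimate of Lemma~\ref{lem:geodesic-contraction} to transport that comparison down to the base $g_iP_{(i)}$. Writing $u:=F_i(x)=[l_x]$, we have $L_i(F_i(x))=l^u(n_i(l^u))$. Since $x\in g_iP_{(i)}\subset\Gamma\subset X^i$, I would first invoke Lemma~\ref{lem:txsx} to produce a parameter $s_x\in[0,\infty)$ with $l^u(s_x)\in X^i$ and $d(x,l^u(s_x))\le 4\delta_0$.

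The crucial intermediate claim is that $s_x\ge n_i(l^u)$. To prove it I would argue from the shape of $l^u$ near the horoball: $l^u$ is a geodesic ray starting at $e_i\in\Horo(g_iP_{(i)})$ at level $N_{\delta_0}\ge 1$ and converging to $u\neq s_i$, so by \cite[Lemma~3.10]{MR2448064} --- as already invoked before Lemma~\ref{lem:find-geodesic} --- the trace $l^u([0,\infty))\cap\Horo(g_iP_{(i)})$ is a connected initial arc consisting of at most two vertical and one horizontal segment, ending when $l^u$ leaves $\Horo(g_iP_{(i)})$ through its base $g_iP_{(i)}$. Consequently $n_i(l^u)$ is the endpoint of that arc, and for $s<n_i(l^u)$ the point $l^u(s)$ lies in $\Horo(g_iP_{(i)})$ at level $\ge 1$, hence outside $X^i=\Gamma\cup\bigcup_{j\neq i}\Horo(g_jP_{(j)})$; since $l^u(s_x)\in X^i$, we must have $s_x\ge n_i(l^u)$. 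The one delicate point I anticipate here is a possible horizontal segment of length $\le 3$ sitting at level $0$ just before the exit, which would only give $s_x\ge n_i(l^u)-3$; this I would absorb either by enlarging the hyperbolicity constant $\delta_0$ (which costs nothing) or by an elementary triangle inequality, and I expect this bookkeeping to be the main --- indeed the only --- obstacle in the argument.

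Granting $s_x\ge n_i(l^u)$, I would finish as follows. Pick a geodesic segment $m\colon[0,a]\to\Xaug$ from $e_i$ to $x$; because it ends at $x\in g_iP_{(i)}$, the last visit of $m$ to $g_iP_{(i)}$ is the endpoint itself, so $n_i(m)=a$ and $m(n_i(m))=x$. Applying Lemma~\ref{lem:geodesic-contraction} to $m$ and $l^u|_{[0,s_x]}$ --- both start at $e_i$ and their endpoints $x$ and $l^u(s_x)$ lie in $X^i$ --- gives
\[
 d\bigl(x,\,l^u(n_i(l^u|_{[0,s_x]}))\bigr)\le d\bigl(x,\,l^u(s_x)\bigr)+2\delta_0\le 6\delta_0 .
\]
Finally, $s_x\ge n_i(l^u)$ together with the fact that $l^u$ visits no point of $g_iP_{(i)}$ beyond parameter $n_i(l^u)$ shows $n_i(l^u|_{[0,s_x]})=n_i(l^u)$, so the left-hand side above is exactly $d(x,L_i(F_i(x)))$, and the lemma follows.
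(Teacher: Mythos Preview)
Your proof is correct and follows the same approach as the paper's: invoke Lemma~\ref{lem:txsx} to obtain $s_x$ with $l^u(s_x)\in X^i$ and $d(x,l^u(s_x))\le 4\delta_0$, then apply Lemma~\ref{lem:geodesic-contraction} to a geodesic from $e_i$ to $x$ and to $l^u|_{[0,s_x]}$. The paper's proof is terse and leaves implicit both the choice of the geodesic $m$ and the verification that $n_i(l^u|_{[0,s_x]})=n_i(l^u)$; you have simply made these steps explicit, including the harmless bookkeeping around a possible short horizontal segment at level $0$.
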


\begin{proof}
 Let $x\in g_iP_{(i)}$. Set $u=[l_x]$. By Lemma~\ref{lem:txsx}, 
there exists $s_x\in [0,\infty)$ such that 
$d(x,l^u(s_x)) \leq 4\delta_0$ and $l^u(s_x)\in X^i$. Then by
 Lemma~\ref{lem:geodesic-contraction}, 
$d(x, L_i([l_x])) \leq d(x,l^u(s_x)) + 2\delta_0 \leq 6\delta_0$.
\end{proof}

\begin{lemma}
\label{lem:large-scale-Lipshitz}
 The composite $L_i\circ F_i$ is a large scale Lipschitz map, in fact, 
 for any $x,y \in X^i$, we have 
\[
 d(L_i\circ F_i(x),L_i\circ F_i(y))\leq d(x,y) + 10\delta_0.
\]
\end{lemma}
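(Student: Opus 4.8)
The plan is to chain together the two preceding lemmas. Write $u := F_i(x) = [l_x]$ and $v := F_i(y) = [l_y]$, so that by construction $L_i\circ F_i(x) = l^u(n_i(l^u))$ and $L_i\circ F_i(y) = l^v(n_i(l^v))$. First I would apply Lemma~\ref{lem:txsx} to $x$ and to $y$: it produces parameters $s_x, s_y \in [0,\infty)$ with $l^u(s_x)\in X^i$, $l^v(s_y)\in X^i$, and $d(x, l^u(s_x))\le 4\delta_0$, $d(y, l^v(s_y))\le 4\delta_0$. The triangle inequality then gives $d(l^u(s_x), l^v(s_y))\le d(x,y) + 8\delta_0$.

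Next I would note that $l^u$ and $l^v$ are geodesic rays starting at $e_i$, so the truncations $l^u|_{[0,s_x]}$ and $l^v|_{[0,s_y]}$ are finite geodesics issuing from $e_i$ whose endpoints lie in $X^i$. Applying Lemma~\ref{lem:geodesic-contraction} to this pair gives $d(L_i\circ F_i(x), L_i\circ F_i(y))\le d(l^u(s_x), l^v(s_y)) + 2\delta_0$. Combining with the estimate from the previous paragraph yields $d(L_i\circ F_i(x), L_i\circ F_i(y))\le d(x,y) + 8\delta_0 + 2\delta_0 = d(x,y) + 10\delta_0$, which is the assertion.

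The one place that needs care — and the step on which I would spend the most effort — is checking that Lemma~\ref{lem:geodesic-contraction} applied to the truncated geodesic $l^u|_{[0,s_x]}$ genuinely outputs $L_i(u) = l^u(n_i(l^u))$, i.e.\ that $n_i\bigl(l^u|_{[0,s_x]}\bigr) = n_i(l^u)$; this amounts to verifying that $s_x$ is large enough that the truncation still records the last visit of $l^u$ to $g_iP_{(i)}$. Here one uses the Groves--Manning description of geodesics inside a combinatorial horoball (at most two vertical segments and a single horizontal segment of length $\le 3$, cf.\ \cite[Lemma~3.10]{MR2448064}) together with the way $s_x$ is produced in the proof of Lemma~\ref{lem:txsx}: were $s_x$ smaller than $n_i(l^u)$, then $l^u(s_x)\in X^i$ would force $l^u(s_x)$ to lie at level $0$, and the subsegment of $l^u$ between parameters $s_x$ and $n_i(l^u)$ would be a horizontal segment of bounded length — a discrepancy absorbed exactly as in the proof of Lemma~\ref{lem:close}. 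Modulo this bookkeeping, the proof is the short computation above.
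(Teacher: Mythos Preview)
Your argument is exactly the paper's (very terse) proof: invoke Lemma~\ref{lem:txsx} to obtain $s_x,s_y$ with $l^u(s_x),l^v(s_y)\in X^i$ within $4\delta_0$ of $x,y$, then apply Lemma~\ref{lem:geodesic-contraction} to the truncations $l^u|_{[0,s_x]}$ and $l^v|_{[0,s_y]}$. The bookkeeping you flag resolves without any extra absorption: since $l^u$ starts at $e_i$ at level $N_{\delta_0}$ and its intersection with $\Horo(g_iP_{(i)})$ has the Groves--Manning form with the horizontal segment at the maximal level (hence at level $\ge N_{\delta_0}\ge 1$), the ray touches level~$0$ only at time $n_i(l^u)$ and never re-enters the horoball thereafter, so $l^u(s_x)\in X^i$ forces $s_x\ge n_i(l^u)$ and $n_i\bigl(l^u|_{[0,s_x]}\bigr)=n_i(l^u)$ on the nose.
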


\begin{proof}
Let $x,y\in X^i$. 
Set $u = [l_x]$ and $v=[l_y]$. Then $L_i\circ F_i(x) = l^u(n_i(l^u))$
 and $L_i\circ F_i(y) = l^v(n_i(l^v))$. By Lemma~\ref{lem:txsx}, 
there exist $s_x,s_y>0$ such that $d(x,l^u(s_x))\leq 4\delta_0$ and 
 $d(y,l^v(s_y))\leq 4\delta_0$. Then by 
Lemma~\ref{lem:geodesic-contraction},
\[
d(L_i\circ F_i(x),L_i\circ F_i(y))\leq d(x,y) + 10\delta_0.
\]
\end{proof}

We equip $\hat{P}_i$ with the pullback coarse structure by $L_i$. We
remark that $\hat{E} \subset \hat{P}_i \times \hat{P}_i$
is controlled if and only if there exists $R>0$ such that for any 
$(u,v) \in \hat{E}$, we have $d(L_i(u),L_i(v)) < R$. 
\begin{lemma}
\label{lem:going-out}
Let $l\colon [0,\infty)\rightarrow \Xaug$ be a geodesic such that $l(0)
 =e_i$ and $l(\infty) \neq s_i$. Then for any $r>0$, there exists $t_r$
 such that for all $t\geq t_r$, we have $d(l(t),\Horo(g_iP_{(i)})) > r$.
\end{lemma}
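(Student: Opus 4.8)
The plan is to compare $l$ with the vertical ray that climbs straight up to $s_i$, and to exploit that $l(\infty)\neq s_i$ forces the relevant Gromov product to be finite. Write $e_i=(i,g_i,N_{\delta_0})$ and let $\gamma\colon[0,\infty)\to\Xaug$ be the geodesic ray with $\gamma(s)=(i,g_i,N_{\delta_0}+s)$ for integers $s$; then $\gamma(0)=e_i$ and, by Lemma~\ref{lem:parabolic-pt-of-horoball}, $[\gamma]=s_i$. Since $l(\infty)\neq s_i$, the quantity $M:=([l]\mid s_i)_{e_i}$ is finite (it is standard that distinct points of the Gromov boundary have finite Gromov product; see \cite[Chapter 7]{MR1086648}), and by the monotonicity of the Gromov product along geodesic rays issuing from a common basepoint, recalled just before Lemma~\ref{lem:approx}, one has $M\geq(l(\tau)\mid\gamma(s))_{e_i}$ for all $\tau\geq0$ and all integers $s\geq0$.

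First I would record the following ``funnelling'' estimate inside the horoball: for every vertex $z=(i,p,h)\in\Horo(g_iP_{(i)})$ and every integer $s$ with $N_{\delta_0}+s\geq\max\{h,\ \log_2(1+d_i(g_i,p))\}$ one has $d(z,\gamma(s))\leq N_{\delta_0}+s-h+1$. Indeed, climbing the $N_{\delta_0}+s-h$ vertical edges from $z$ up to $(i,p,N_{\delta_0}+s)$ and then crossing one horizontal edge to $\gamma(s)=(i,g_i,N_{\delta_0}+s)$ --- which exists because $d_i(g_i,p)\leq 2^{\,N_{\delta_0}+s}$ --- produces such a path. Since $d(e_i,\gamma(s))=s$ and $h\geq0$, this yields, for every such $s$,
\[
(z\mid\gamma(s))_{e_i}=\tfrac12\bigl(d(e_i,z)+s-d(z,\gamma(s))\bigr)\ \geq\ \tfrac12\bigl(d(e_i,z)+h-N_{\delta_0}-1\bigr)\ \geq\ \tfrac12\,d(e_i,z)-\tfrac{N_{\delta_0}+1}{2},
\]
a bound that, crucially, does not depend on the auxiliary height $s$.

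Finally I would combine these. Suppose $d\bigl(l(\tau),\Horo(g_iP_{(i)})\bigr)\leq r$, choose a vertex $z\in\Horo(g_iP_{(i)})$ with $d(l(\tau),z)\leq r+1$, and pick an admissible $s$ as above. Using that the Gromov product is $1$-Lipschitz in its first argument,
\[
M\ \geq\ (l(\tau)\mid\gamma(s))_{e_i}\ \geq\ (z\mid\gamma(s))_{e_i}-(r+1)\ \geq\ \tfrac12\,d(e_i,z)-\tfrac{N_{\delta_0}+1}{2}-(r+1),
\]
so $d(e_i,z)\leq 2M+N_{\delta_0}+3+2r$, and hence, as $l$ is a geodesic ray from $e_i$, $\tau=d(e_i,l(\tau))\leq d(e_i,z)+(r+1)\leq 2M+N_{\delta_0}+4+3r=:t_r$. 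Thus every $t\geq t_r$ satisfies $d\bigl(l(t),\Horo(g_iP_{(i)})\bigr)>r$, as required. The one substantive step is the funnelling estimate of the middle paragraph: it encodes the fact that the combinatorial horoball collapses all of its (arbitrarily wide) geometry toward the single point $s_i$, and it is the uniformity of this estimate over $\Horo(g_iP_{(i)})$ --- together with its independence of $s$, so that no passage to the boundary beyond the finiteness of $M$ is needed --- that makes the argument work; it is consistent with the description of geodesics in combinatorial horoballs in \cite[Lemma~3.10]{MR2448064}.
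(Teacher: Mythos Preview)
Your proof is correct and genuinely different from the paper's. The paper argues by contradiction via a soft topological observation: if $l$ stayed within distance $r$ of $\Horo(g_iP_{(i)})$ for all large times, then since the $r$-neighborhood of the horoball is coarsely equivalent to the horoball itself and the Gromov compactification of $\Horo(g_iP_{(i)})$ is its one-point compactification (Lemma~\ref{lem:parabolic-pt-of-horoball}), $l(t)$ would have to converge to $s_i$, contradicting $l(\infty)\neq s_i$. Your argument instead is a direct metric computation: you compare $l$ with the vertical ray $\gamma$ to $s_i$, use the finiteness of $M=([l]\mid s_i)_{e_i}$ and the monotonicity of Gromov products along rays (the remark preceding Lemma~\ref{lem:approx}) together with an explicit ``funnelling'' estimate inside the horoball to bound $\tau$ whenever $l(\tau)$ is $r$-close to the horoball. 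The paper's route is shorter and conceptually cleaner, leaning on an already-established lemma; your route is more hands-on but yields an explicit bound $t_r=2M+N_{\delta_0}+4+3r$, which the paper's argument does not provide.
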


\begin{proof}
Suppose that there exists $r>0$ such that $d(l(t),\Horo(g_iP_{(i)}))
 \leq r$ for all $t\geq 0$. Since the $r$-neighborhood of
 $\Horo(g_iP_{(i)})$ is coarsely equivalent to $\Horo(g_iP_{(i)})$,
 by 
 Proposition~\ref{lem:parabolic-pt-of-horoball}, $l(t)$ converges to a
 parabolic point $s_i$ as $t$ goes to infinity. This contradicts the
 assumption. 
\end{proof}

\begin{lemma}
\label{lem:P_i-is-proper-coarse-space}
 $\hat{P}_i$ is a proper coarse space.
\end{lemma}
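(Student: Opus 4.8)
The plan is to verify the hypotheses of Proposition~\ref{prop:pullback-proper} for the map $L_i\colon\hat{P}_i\to g_iP_{(i)}$. The target $g_iP_{(i)}$, with the metric $d_i$ inherited from $d_{\mathcal S}$, is a proper metric space carrying the bounded coarse structure, and $\hat{P}_i=\partial\Xaug\setminus\{s_i\}$ is an open subspace of the compact metrizable space $\partial\Xaug$, hence locally compact, second countable and Hausdorff. So the work reduces to showing that $L_i$ is pseudocontinuous and that $L_i^{-1}(K)$ is relatively compact in $\hat{P}_i$ for every compact $K\subset g_iP_{(i)}$; as $g_iP_{(i)}$ is discrete, such a $K$ is finite, so it is enough to treat $K=\{p\}$.

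The main leverage will be the identity $d_{\Xaug}(e_i,L_i(u))=n_i(l^u)$, which holds because $l^u$ is a geodesic ray issuing from $e_i$, together with the observation that, by \cite[Lemma~3.10]{MR2448064} and because $\Horo(g_iP_{(i)})$ meets the rest of $\Xaug$ only along $g_iP_{(i)}\times\{0\}$ while $l^u(\infty)\neq s_i$, the part of $l^u$ lying in $\Horo(g_iP_{(i)})$ is exactly $l^u([0,n_i(l^u)])$. I will also use two uniform estimates, with constants depending only on $\delta_0$: first, two geodesic rays from $e_i$ with Gromov product $R$ remain within a bounded distance of each other on $[0,R-\mathrm{const}]$ (from Lemma~\ref{lem:approx} and $\delta_0$-thinness of triangles); second, a quantitative version of Lemma~\ref{lem:going-out}, namely that once $l^u$ has left $\Horo(g_iP_{(i)})$ it escapes every fixed neighbourhood of $\Horo(g_iP_{(i)})$ within boundedly many further steps.

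Granting this, relative compactness follows by contradiction: if $u_j\in L_i^{-1}(\{p\})$ tended to $s_i$, then $n_i(l^{u_j})=d_{\Xaug}(e_i,p)$ would be constant, whereas $(u_j\mid s_i)_{e_i}\to\infty$ forces $l^{u_j}$ to fellow-travel the vertical ray from $e_i$ to $s_i$, hence to stay deep in $\Horo(g_iP_{(i)})$, for a time tending to infinity, so $n_i(l^{u_j})\to\infty$. Thus $s_i\notin\overline{L_i^{-1}(\{p\})}$ and this closure is compact in $\hat{P}_i$. For pseudocontinuity, fix $u$ and take $u'$ so close to $u$ that $(u\mid u')_{e_i}$ exceeds $n_i(l^u)$ plus the sum of the two constants above; then $l^{u'}$ fellow-travels $l^u$ past the moment when $l^u$ has escaped a bounded neighbourhood of $\Horo(g_iP_{(i)})$, which by the structural observation forces $n_i(l^{u'})$ to be within a bounded amount of $n_i(l^u)$ and $L_i(u')$ to lie within a bounded $d_{\Xaug}$-distance, hence within a bounded $d_i$-distance $\epsilon=\epsilon(\delta_0)$, of $L_i(u)$. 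As $\{u':(u\mid u')_{e_i}>n_i(l^u)+\mathrm{const}\}$ is a neighbourhood of $u$ and $\epsilon$ is independent of $u$, the set $L_i^{-1}(B(L_i(u);\epsilon))$ is a neighbourhood of $u$, so $L_i$ is pseudocontinuous, and Proposition~\ref{prop:pullback-proper} yields the claim.

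The step I expect to be the genuine obstacle is the quantitative escape estimate: proving that $l^u$ leaves the $\delta_0$-neighbourhood of $\Horo(g_iP_{(i)})$ within boundedly many steps after time $n_i(l^u)$, uniformly in $i$ and $u$. The intended argument is that the relevant subsegment of $l^u$ stays, by quasiconvexity of horoballs, in a bounded neighbourhood of $\Horo(g_iP_{(i)})$ while avoiding $\Horo(g_iP_{(i)})$ itself, so both of its endpoints are close to $g_iP_{(i)}\times\{0\}$; since a geodesic of $\Xaug$ joining two vertices of $g_iP_{(i)}$ that are far apart in $d_{\mathcal S}$ must climb to a large height inside the combinatorial horoball, these endpoints can only be boundedly far apart in $d_{\mathcal S}$, hence in $d_{\Xaug}$, which bounds the length of the subsegment. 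Pinning down these constants and checking their uniformity is the technical core of the proof.
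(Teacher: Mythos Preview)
Your overall strategy coincides with the paper's: both verify the hypotheses of Proposition~\ref{prop:pullback-proper} for $L_i\colon\hat P_i\to g_iP_{(i)}$, treating relative compactness of $L_i^{-1}(K)$ and pseudocontinuity of $L_i$ separately. Your relative compactness argument is a contrapositive version of the paper's; the paper argues directly that $d(e_i,L_i(u))\leq R$ forces $(u\mid s_i)_{e_i}\leq R$, so $L_i^{-1}(K)\subset\{u:\rho_i(s_i,u)\geq A^{-1}e^{-CR}\}$, which is closed in $\partial\Xaug$ and misses $s_i$.

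Where you diverge is in the pseudocontinuity step, and here you are working much harder than necessary. Recall that Definition~\ref{def:pseudocontinous} only asks for a single $\epsilon>0$ such that $L_i^{-1}(B(L_i(u);\epsilon))$ is a neighbourhood of $u$ for every $u$; the neighbourhood itself is allowed to depend on $u$. The paper exploits this: for a fixed $u$, Lemma~\ref{lem:going-out} (which is just the observation that a ray not converging to $s_i$ eventually leaves every neighbourhood of the horoball) provides a time $t_0=t_0(u)$ with $d(l^u(t),\Horo(g_iP_{(i)}))>2\delta_0$ for $t>t_0$. For $v$ with $(u\mid v)_{e_i}>t_0+3\delta_0$, Lemma~\ref{lem:approx} and $\delta_0$-thinness give $d(l^u(\tau),l^v(\tau))\leq\delta_0$ at some $\tau>t_0$, so $l^v(\tau)\in X^i$ as well, and then Lemma~\ref{lem:geodesic-contraction} applied to $l^u|_{[0,\tau]}$ and $l^v|_{[0,\tau]}$ yields $d(L_i(u),L_i(v))\leq 3\delta_0$ directly. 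Thus $\epsilon=3\delta_0$ works and no uniform escape estimate is needed.

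In short, the step you single out as ``the genuine obstacle'' and ``the technical core'' --- a uniform-in-$u$ bound on how long $l^u$ lingers near $\Horo(g_iP_{(i)})$ after time $n_i(l^u)$ --- is simply not required. Replacing it by the non-uniform Lemma~\ref{lem:going-out} and invoking Lemma~\ref{lem:geodesic-contraction} for the final distance bound gives the paper's short proof.
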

\begin{proof}
We show that $L_i$ satisfies the conditions in
 Proposition~\ref{prop:pullback-proper}.
Let $K\subset g_iP_{(i)}$ be a compact set. Fix $R> 0$ 
such that $K\subset B(e_i;R)$. Here $B(e_i;R)$
 denotes a closed ball in $\Xaug$ of radius $R$ centered at $e_i$. 
Let $u\in \hat{P}_i$. 
If $L_i(u) \in B(e_i;R)$, 
then $(u|s_i)_{e_i}\leq R$. 
Therefore we have
\begin{align*}
 L_i^{-1}(K) &\subset \{u \in \hat{P}_i:d(e_i,L_i(u))\leq R\}\\
 &\subset \{ u \in \hat{P}_i: 
\rho_i(s_i,u)\geq A^{-1}e^{-CR}\}.
\end{align*}
Thus $L_i^{-1}(K)$ is relatively compact. 

We fix $u\in \hat{P}_i$. Since $u\neq s_i$, by
 Lemma~\ref{lem:going-out}, there exists $t_0>0$ such that for all $t>t_0$,
we have $d(l^u(t),\Horo(g_iP_{(i)})) > 2\delta_0$.
Let $v\in \hat{P}_i$ such that $(u|v)_{e_i}> t_0 + 3\delta_0$. 
By Lemma~\ref{lem:approx}, there exists $s>0$ such that 
$(l^u(s),l^v(s))_{e_i}\geq (u|v)_{e_i} -3\delta_0$. 
Set $\tau=(l^u(s)|l^v(s))_{e_i}$. Since $\tau > t_0$, 
we have $d(l^u(\tau),\Horo(g_iP_{(i)}))>2\delta_0$.
Since a geodesic triangle 
\[
 l^u([0,s])\cup  [l^u(s),l^v(s)]\cup l^v([0,s])
\]
is $\delta_0$-thin, we have
$d(l^u(\tau),l^v(\tau))\leq \delta_0$. Thus, 
$d(l^v(\tau),\Horo(g_iP_{(i)}))>\delta_0$. Then we can apply 
Lemma~\ref{lem:geodesic-contraction} to $l^u$ and $l^v$, so we have
$d(L_i(u),L_i(v)) < 3\delta_0$. 
Thus, the inverse image $L_i^{-1}(B(L_i(u),3\delta_0))$ contains a
 neighborhood 
$\{v\in \hat{P}_i: (u|v)_{e_i}> t_0+3\delta_0\}$
of $u$.
Therefore $L_i$ is pseudocontinuous. 
\end{proof}

\begin{proposition}
\label{prop:coarse_equivalence}
 $\hat{P}_i$ and $g_iP_{(i)}$ are coarsely equivalent.
\end{proposition}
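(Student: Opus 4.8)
The plan is to deduce Proposition~\ref{prop:coarse_equivalence} directly from Proposition~\ref{prop:pull_back_coarse_str}, applied to the map $L_i\colon \hat{P}_i\to g_iP_{(i)}$ that was used to define the coarse structure on $\hat{P}_i$. According to that proposition it suffices to exhibit a map $g\colon g_iP_{(i)}\to \hat{P}_i$ for which the composite $L_i\circ g$ is close to the identity on $g_iP_{(i)}$; then $L_i$ and $g$ are mutually inverse coarse equivalences.

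The candidate is $g:=F_i|_{g_iP_{(i)}}$. First I would note that $g_iP_{(i)}\subseteq \Gamma\subseteq X^i$, so $F_i\colon X^i\to\hat{P}_i$ does restrict to a map on $g_iP_{(i)}$, and its values really do lie in $\hat{P}_i=\partial\Xaug\setminus\{s_i\}$: indeed, by Lemma~\ref{lem:find-geodesic} the geodesic ray $l_x$ chosen for $x\in X^i$ satisfies $l_x(\infty)\neq s_i$, so $F_i(x)=[l_x]\neq s_i$. Next, because $\hat{P}_i$ carries the pullback coarse structure along $L_i$, the map $L_i$ is automatically proper and bornologous, hence coarse (this is the ``it is trivial that $f$ is a coarse map'' step in the proof of Proposition~\ref{prop:pull_back_coarse_str}). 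Finally, Lemma~\ref{lem:close} gives $d(x,L_i(F_i(x)))\leq 6\delta_0$ for every $x\in g_iP_{(i)}$, so the set $\{(x,L_i(F_i(x))):x\in g_iP_{(i)}\}$ is controlled; that is, $L_i\circ F_i|_{g_iP_{(i)}}$ is close to the identity on $g_iP_{(i)}$. Proposition~\ref{prop:pull_back_coarse_str} then yields that $\hat{P}_i$ and $g_iP_{(i)}$ are coarsely equivalent, with $L_i$ and $F_i|_{g_iP_{(i)}}$ as the coarse equivalences.

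There is essentially no obstacle remaining at this stage: the substantive work has already been done in the preceding lemmas, namely the construction and control of $L_i$ and $F_i$ (Lemmas~\ref{lem:find-geodesic}, \ref{lem:geodesic-contraction} and \ref{lem:close}, which bound the displacement of $L_i\circ F_i$) and Lemma~\ref{lem:P_i-is-proper-coarse-space} (which records that $\hat{P}_i$ is a proper coarse space, so that it is a legitimate object of the coarse category). The only point that needs a word of care is keeping the coarse structure on $g_iP_{(i)}$ fixed throughout: in the application of Proposition~\ref{prop:pull_back_coarse_str} it is the coarse structure inherited from $\Xaug$, which is exactly the one with respect to which Lemma~\ref{lem:close} measures the displacement of $L_i\circ F_i$; and this subspace coarse structure agrees with the bounded coarse structure of the word metric $d_i$ (a subset of $g_iP_{(i)}\times g_iP_{(i)}$ is $d$-bounded if and only if it is $d_i$-bounded, by the geometry of the combinatorial horoball), so no discrepancy arises with the later identification of $g_iP_{(i)}$ with $P_i$.
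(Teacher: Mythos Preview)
Your proof is correct and takes essentially the same approach as the paper: define $H_i:=F_i|_{g_iP_{(i)}}$, invoke Lemma~\ref{lem:close} to see that $L_i\circ H_i$ is close to the identity, and apply Proposition~\ref{prop:pull_back_coarse_str}. The paper's proof is the same three-line argument, without your additional commentary.
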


\begin{proof}
 We define a map $H_i\colon g_iP_{(i)} \to \hat{P}_i$ 
 as the restriction of $F_i$, that is,  $H_i(x) := [l_x]$ for 
 $x\in g_iP_{(i)}$. Then by Lemma~\ref{lem:close}, 
 the composite $L_i\circ H_i$ is close to the
 identity. So by Proposition \ref{prop:pull_back_coarse_str}, 
 $\hat{P}_i$ and $g_iP_{(i)}$ are coarsely equivalent.
\end{proof}

\begin{proposition}
\label{prop:Pullback}
 For any Higson function $f\in C_h(\hat{P}_i)$, the pullback 
$F_i^*f := f\circ F_i$ is a Higson function on $X^i$.
\end{proposition}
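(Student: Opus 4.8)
The plan is to verify the vanishing-variation condition for $F_i^{*}f=f\circ F_i$ directly. Boundedness is immediate from that of $f$, and since $F_i$ is a priori defined only on the vertex set of $X^i$ one extends $f\circ F_i$ linearly over the edges of $X^i$, which alters its values by an amount controlled on each controlled set and so does not affect vanishing variation. Writing $d$ for the metric of $\Xaug$, every controlled set of $X^i$ is contained in $E_R:=\{(x,y)\in X^i\times X^i : d(x,y)\le R\}$ for some $R>0$; so it suffices to fix $R,\epsilon>0$ and produce a bounded set $B\subset X^i$ with $\abs{f(F_i(x))-f(F_i(y))}<\epsilon$ for all $(x,y)\in E_R$ with $\{x,y\}\not\subset B$.

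The geometric core is a Gromov-product estimate. Fix $(x,y)\in E_R$. By Lemma~\ref{lem:find-geodesic}, $l_x(t_x)\in X^i$ with $d(x,l_x(t_x))\le 2\delta_0$, and since $l_x$ is a geodesic ray issuing from $e_i$ we have $d(e_i,l_x(t_x))=t_x$; likewise for $y$. Hence $t_x\ge d(e_i,x)-2\delta_0$, $t_y\ge d(e_i,y)-2\delta_0$ and $d(l_x(t_x),l_y(t_y))\le R+4\delta_0$, so, using $d(e_i,x)+d(e_i,y)\ge 2\max\{d(e_i,x),d(e_i,y)\}-R$ (which holds because $d(x,y)\le R$),
\[
 \bigl(l_x(t_x)|l_y(t_y)\bigr)_{e_i}
 =\tfrac12\bigl(t_x+t_y-d(l_x(t_x),l_y(t_y))\bigr)
 \ge \max\{d(e_i,x),d(e_i,y)\}-R-4\delta_0 .
\]
Since $l_x,l_y$ issue from $e_i$, the monotonicity of the Gromov product recorded before Lemma~\ref{lem:approx} gives $\bigl(F_i(x)|F_i(y)\bigr)_{e_i}=\bigl([l_x]|[l_y]\bigr)_{e_i}\ge\bigl(l_x(t_x)|l_y(t_y)\bigr)_{e_i}$, and the comparison $\rho_i(u,v)\le Ae^{-C(u|v)_{e_i}}$ for the visual metric $\rho_i$ on $\partial\Xaug$ yields
\[
 \rho_i\bigl(F_i(x),F_i(y)\bigr)\le Ae^{C(R+4\delta_0)}\,e^{-C\max\{d(e_i,x),\,d(e_i,y)\}} .
\]
Separately, Lemma~\ref{lem:large-scale-Lipshitz} gives $d(L_iF_i(x),L_iF_i(y))\le d(x,y)+10\delta_0\le R+10\delta_0$, so $\hat E:=(F_i\times F_i)(E_R)$ is a controlled subset of $\hat P_i$.

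Now combine these. Apply the vanishing-variation property of $f$ on $\hat P_i$ to $\hat E$: there is a bounded $\hat B\subset\hat P_i$ with $\abs{f(u)-f(v)}<\epsilon$ whenever $(u,v)\in\hat E$ and $\{u,v\}\not\subset\hat B$. By Lemma~\ref{lem:P_i-is-proper-coarse-space} the closure of $\hat B$ in $\partial\Xaug$ is compact and does not contain $s_i$, so $\delta:=\rho_i(s_i,\overline{\hat B})>0$ and $\hat B\subset K_\delta:=\{u\in\hat P_i:\rho_i(s_i,u)\ge\delta\}$. The set $K_{\delta/2}=\{u\in\partial\Xaug:\rho_i(s_i,u)\ge\delta/2\}$ is a compact subset of $\hat P_i$, so $f$ is uniformly $\rho_i$-continuous there: there is $\eta>0$ with $\abs{f(u)-f(v)}<\epsilon$ for $u,v\in K_{\delta/2}$, $\rho_i(u,v)<\eta$. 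Choose $D>0$ with $Ae^{C(R+4\delta_0)}e^{-CD}<\eta$ and set $B:=\{x\in X^i:d(e_i,x)\le D\}$, a bounded subset of $X^i$ (its diameter in $\Xaug$ is at most $2D$). Let $(x,y)\in E_R$ with $\{x,y\}\not\subset B$. If $F_i(x)\notin K_\delta$ or $F_i(y)\notin K_\delta$, then $\{F_i(x),F_i(y)\}\not\subset\hat B$ and $(F_i(x),F_i(y))\in\hat E$, whence $\abs{f(F_i(x))-f(F_i(y))}<\epsilon$. Otherwise $F_i(x),F_i(y)\in K_\delta\subset K_{\delta/2}$, while $\max\{d(e_i,x),d(e_i,y)\}>D$, so the estimate above gives $\rho_i(F_i(x),F_i(y))<\eta$ and uniform continuity gives $\abs{f(F_i(x))-f(F_i(y))}<\epsilon$. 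In either case the required inequality holds, so $F_i^{*}f$ is a Higson function on $X^i$.

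The main obstacle is that $F_i$ is not a proper map: a geodesic ray aimed at a boundary point other than $s_i$ meets arbitrarily large balls around $e_i$ while keeping a fixed endpoint, so $F_i^{-1}$ of a bounded subset of $\hat P_i$ need not be bounded in $X^i$, and one cannot simply invoke functoriality of the Higson corona together with Proposition~\ref{prop:coarse_equivalence}. The resolution is the dichotomy used above — far out in $X^i$, either the image pair already avoids $\hat B$, where the vanishing variation of $f$ applies, or both images are trapped in a fixed compact region bounded away from $s_i$ and are forced $\rho_i$-close, where continuity of $f$ applies. The one genuinely technical point is the Gromov-product estimate, in particular the fact supplied by Lemma~\ref{lem:find-geodesic} that the point $l_x(t_x)$ approximating $x$ sits at parameter at least $d(e_i,x)-2\delta_0$ along $l_x$.
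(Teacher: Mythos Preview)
Your proof is correct and follows essentially the same strategy as the paper's: use Lemma~\ref{lem:large-scale-Lipshitz} to show $F_i$ carries controlled sets to controlled sets, then split into two cases --- either the image pair escapes a fixed bounded subset of $\hat P_i$ (the Higson property of $f$ applies), or both images remain inside it (uniform continuity of $f$ on the resulting compactum applies, the Gromov-product estimate forcing the images to be $\rho_i$-close). The only cosmetic difference is that the paper parametrizes the bounded set $\hat K\subset\hat P_i$ via $L_i$ (namely $\hat K=\{u:d(e_i,L_i(u))<S\}$), whereas you parametrize it via the visual distance to $s_i$ (your $K_\delta$); by the proof of Lemma~\ref{lem:P_i-is-proper-coarse-space} these describe the same family of relatively compact sets.
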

\begin{proof}
Let $f\in C_h(\hat{P}_i)$ be a Higson function. We fix $\epsilon>0$ 
and $R>0$. 
Let $\hat{E}:=\{(u,v):d(L_i(u),L_i(v))< R+10\delta_0\}$ 
be a controlled set of $\hat{P}_i$. There exists $S>0$ such that for a
bounded set $\hat{K} := \{u\in \hat{P}_i:d(e_i, L_i(u))<S\}$ and for any
$(u,v)\in \hat{E}$, 
\begin{align}
\label{eq:Higson_condition}
 \text{if } (u,v) \notin \hat{K}\times \hat{K} 
 \text{ then } \abs{\grad f(u,v)} < \epsilon.
\end{align}
On the other hand,
since $\hat{P}_i$ is a proper coarse space, $\hat{K}$ is relatively compact.
Thus the restriction $f|\hat{K}$ is uniformly continuous on $\hat{K}$, so
there exists $\theta>0$ such that
\begin{align}
\label{eq:uniformly_continuous}
 \text{for any } u,v \in \hat{K}, \text{ if } \rho_i(u,v)< \theta 
 \text{ then } \abs{\grad f(u,v)}<\epsilon.
\end{align}

Let $E_R := \{(x,y):d(x,y)<R\}$ be a controlled set of 
$X^i$. 
By Lemma~\ref{lem:large-scale-Lipshitz}, we have
 $F_i(E_R) \subset \hat{E}$. 
Set 
\begin{align*}
K'&:=\{x\in X^i: d(e_i,L_i\circ F_i(x))<S\};\\
T & := -\frac{1}{C}\log(\frac{\theta}{A}) + R + 4\delta_0;\\
K & := B(e_i,T).
\end{align*}
We remark that $K'$ is unbounded.
Let $(x,y) \in E_R$ such that $(x,y)\notin K\times K$.
We first assume $(x,y)\notin K'\times K'$, then 
$(F_i(x),F_i(y)) \notin \hat{K}\times\hat{K}$.
Thus by (\ref{eq:Higson_condition}) we have
$\abs{\grad F_i^*f(x,y)} = \abs{\grad f(F_i(x),F_i(y))}< \epsilon$.
Next, we assume $(x,y) \in K'\times K'$.
Since Lemma~\ref{lem:find-geodesic} implies
\[
 ([l_x]|[l_y])_{e_i}\geq (l_x(t_x)|l_y(t_y))_{e_i}\geq 
 T-R-4\delta_0,
\] we have
$\rho_i([l_x],[l_y]) < Ae^{-C(T-R-4\delta_0)}= \theta$. 
 Then by (\ref{eq:uniformly_continuous}) 
we have $\abs{\grad F_i^*f(x,y)} < \epsilon$.
\end{proof}

By Proposition~\ref{prop:Pullback}, $F_i$ extends to a continuous
map 
\[
 hF_i\colon hX^i\to h\hat{P}_i. 
\]

Since the Gromov boundary is a corona, there exists a
continuous map 
\[
 \alpha \colon h\Xaug \to \barXaug
\]
which is the identity on $\Xaug$. Since
coarse embedding $X^i\hookrightarrow \Xaug$ induces an embedding 
$\nu X^i\hookrightarrow \nu \Xaug$, we regard $\nu X^i$ as a subspace of
$\nu \Xaug$. (See Proposition~\ref{prop:Dra-embedding}.)
\begin{lemma}
\label{lem:F_and_alpha}
 For any $y \in \nu X^i$, if $y \notin \alpha^{-1}(s_i)$
 then we have $\alpha(y) = hF_i(y)\in \hat{P}_i$.
\end{lemma}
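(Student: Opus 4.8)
The plan is to verify the identity by evaluating both maps on a net approximating $y$ and then appealing to the Hausdorffness of $h\hat{P}_i$. Fix $y\in\nu X^i$ with $y\notin\alpha^{-1}(s_i)$ and put $u:=\alpha(y)$. The map $\alpha$ sends $\nu\Xaug$ into $\partial\Xaug$ (part of $\partial\Xaug$ being a corona of $\Xaug$), hence $u\in\partial\Xaug$, and the hypothesis gives $u\neq s_i$, that is $u\in\hat{P}_i$. Choose a net $(x_\lambda)$ in $X^i$ with $x_\lambda\to y$ in $hX^i$. Since the inclusion $X^i\hookrightarrow\Xaug$ is a continuous coarse map, it extends to a continuous map $hX^i\to h\Xaug$ restricting on the Higson coronae to the embedding $\nu X^i\hookrightarrow\nu\Xaug$ of Proposition~\ref{prop:Dra-embedding}; so $x_\lambda\to y$ in $h\Xaug$ as well, and applying $\alpha$ (continuous, identity on $\Xaug$) gives $x_\lambda=\alpha(x_\lambda)\to u$ in $\barXaug$. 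Applying instead the continuous map $hF_i$ from Proposition~\ref{prop:Pullback} gives $F_i(x_\lambda)=[l_{x_\lambda}]\to hF_i(y)$ in $h\hat{P}_i$. Thus it suffices to prove $[l_{x_\lambda}]\to u$ in $\hat{P}_i$; then the two limits $hF_i(y)$ and $u$ in the Hausdorff space $h\hat{P}_i$ must agree, which is precisely the claim.

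First I would record that $d(e_i,x_\lambda)\to\infty$: since $y\in\nu X^i$, the net $(x_\lambda)$ eventually leaves every bounded subset of $X^i$. Hence, by Lemma~\ref{lem:find-geodesic}, $t_{x_\lambda}=d(e_i,l_{x_\lambda}(t_{x_\lambda}))\ge d(e_i,x_\lambda)-2\delta_0\to\infty$. Using that $l_{x_\lambda}$ is a geodesic ray based at $e_i$ and $d(x_\lambda,l_{x_\lambda}(t_{x_\lambda}))\le 2\delta_0$, together with $\delta_0$-thinness of triangles and Lemma~\ref{lem:approx} for the Gromov product extended to $\barXaug$, one obtains $(x_\lambda|[l_{x_\lambda}])_{e_i}\ge t_{x_\lambda}-c\delta_0$ for a universal constant $c$, so $(x_\lambda|[l_{x_\lambda}])_{e_i}\to\infty$. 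On the other hand, $x_\lambda\to u\in\partial\Xaug$ in $\barXaug$ means $(x_\lambda|u)_{e_i}\to\infty$. Combining the two through the hyperbolic inequality for Gromov products yields $([l_{x_\lambda}]|u)_{e_i}\to\infty$, hence $\rho_i([l_{x_\lambda}],u)\le Ae^{-C([l_{x_\lambda}]|u)_{e_i}}\to 0$; since $\rho_i$ induces the topology of $\partial\Xaug$ and $u\neq s_i$, this is exactly $[l_{x_\lambda}]\to u$ in $\hat{P}_i$, finishing the argument.

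The hardest part will be the estimate of the second paragraph, i.e.\ passing from the purely metric fact that $x_\lambda$ lies $2\delta_0$-close to the point $l_{x_\lambda}(t_{x_\lambda})$ on a ray based at $e_i$ to the asymptotic statement that $[l_{x_\lambda}]$ and the $x_\lambda$ tend to the same boundary point; this requires simultaneously controlling $t_{x_\lambda}\to\infty$ and keeping careful track of Gromov products involving both interior and boundary points. The hypothesis $y\notin\alpha^{-1}(s_i)$ enters precisely at the last step: it guarantees $u\neq s_i$, so that $\rho_i([l_{x_\lambda}],u)\to 0$ is convergence \emph{inside} $\hat{P}_i$ rather than an escape towards $\nu\hat{P}_i$, which is what lets us conclude $hF_i(y)=u\in\hat{P}_i$. (We work with nets rather than sequences only because $hX^i$ need not be metrizable.)
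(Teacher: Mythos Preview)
Your argument is correct and follows essentially the same strategy as the paper: pick a net in $X^i$ converging to $y$, use that $\alpha$ is the identity on $\Xaug$ to get $x_\lambda\to u$ in $\barXaug$, and show $F_i(x_\lambda)\to u$ in $\hat P_i$ so that the limit must equal $hF_i(y)$. The only difference is that the paper reaches the key estimate in one line, writing (up to a bounded additive error) $(F_i(y_\lambda)\mid \alpha(y))_{e_i}\ge (y_\lambda\mid \alpha(y))_{e_i}\to\infty$ directly from the monotonicity of Gromov products along a ray based at $e_i$, whereas you first establish $(x_\lambda\mid [l_{x_\lambda}])_{e_i}\to\infty$ and then combine it with $(x_\lambda\mid u)_{e_i}\to\infty$ via the hyperbolic inequality; your route is slightly longer but equally valid.
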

\begin{proof} Let $y \in \nu X^i\setminus \alpha^{-1}(s_i)$.
 We choose a net $\{y_\lambda\}_{\lambda \in \Lambda}$ in 
$X^i$ such that 
$y_\lambda \to y$. Then $\alpha(y_\lambda) \to \alpha(y)$.
The restriction of $\alpha$ to $\Xaug$ is the identity, so
\[
 (F_i(y_\lambda)|\alpha(y))_{e_i} \geq (y_\lambda|\alpha(y))_{e_i}
= (\alpha(y_\lambda)|\alpha(y))_{e_i} \to \infty.
\]
Thus $F_i(y_\lambda) \to \alpha(y)$ in $\hat{P}_i$, so
we have $hF_i(y)=\alpha(y)$.
\end{proof}

\subsection{Blow-up of parabolic points}
\label{sec:blow-up-parabolic}


In this section, we construct a corona of
\begin{align*}
 X_n &= \Gamma\cup \bigcup_{i>n} \Horo(g_iP_{(i)}).
\end{align*}
For $r= 1,\dots,k$, let $(W_r,\zeta_r)$ be a corona of $P_{r}$. 
For $i\in \N$, set $W_i := W_{(i)}$ and 
$\zeta_i:= \zeta_{(i)}\circ \nu g_i^{-1}$, where 
$\nu g_i^{-1}\colon \nu (g_iP_{(i)})\to \nu P_{(i)}$ is an homeomorphism
induced by an isometry $g_iP_{(i)}\ni x\mapsto g_i^{-1}x\in P_{(i)}$.
Then $(W_i,\zeta_i)$ is a corona of $g_iP_{(i)}$.
By Proposition~\ref{prop:coarse_equivalence}, 
$\nu \hat{P}_i$ is homeomorphic to $\nu g_iP_{(i)}$, so we identify these two
spaces. Thus we have a corona 
$(W_i,\zeta_i)$ of $\hat{P_i}$ and a compact metrizable space
$\hat{P_i} \cup W_i$. We recall that
$\bar{\zeta_i}\colon h\hat{P_i}\rightarrow \hat{P_i}\cup W_i$ denotes an
extension of $\zeta_i$ by the identity on $\hat{P_i}$.
(See Section~\ref{sec:higs-comp}.)

We construct a corona of $X_n$ by replacing $s_i$ by $W_i$ as follows.
Set 
\begin{align}
 \partial  X_n(W_i;i=1,\dots,n) &:= \partial \Xaug \setminus \{s_1,\dots,s_n\} 
 \sqcup \bigsqcup_{i=1}^n W_i.
\end{align} 
We abbreviate $\partial X_n(W_i;i=1,\dots,n)$ to $\partial X_n$.
We equip $\partial X_n$ with the weakest topology such that 
the maps $\sigma_i\colon \partial X_n\to \hat{P_i}\cup W_i$ are
continuous for all $i\in \{1,\dots,n\}$. Here $\sigma_i(x)=s_j$ if $x\in
W_j$ with $j\neq i$ and $\sigma_i(x) = x$ otherwise.




\begin{definition}
 The $n$-th blown-up of $\partial \Xaug$ with respect to 
$W_i, i=1,\dots,n$ is a compact space $\partial X_n(W_i;i=1\dots,n)$
 equipped with the above topology. 
 The blown-up corona of
 $(G,\famP,\{W_1,\dots,W_k\})$ is the
 projective limit $\dX_\infty = \varprojlim \dX_n$.
\end{definition}

We also regard $\nu X_n$ and $\nu G$ as subspaces of $\nu \Xaug$.
We define a map $\xi_n \colon \nu X_n \to \partial X_n$ as
\begin{align*}
 \xi_n(x) := \begin{cases}
	   \alpha(x)& \text{ if } x\notin
	   \bigcup_{i=1}^n\alpha^{-1}(s_i),\\
	   \bar{\zeta}_i \circ hF_i(x)& \text{ if } x \in
	   \alpha^{-1} (s_i) \text{ for } i=1,\dots,n.
	  \end{cases}&
\end{align*}
\begin{proposition}
 The map $\xi_n \colon \nu X_n \to \partial X_n$ is
 continuous for all $n\in \N\cup \{\infty\}$. Thus $\partial X_n$ and
 $\partial X_\infty$ are
 respectively coronae of $X_n$ and $G$. If $\zeta_i$ is surjective for
 $i=1,\dots,k$, then so is $\xi_n$ for all $n\in \N\cup \{\infty\}$.
\end{proposition}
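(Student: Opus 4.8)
The plan is to handle finite $n$ first, to derive the case $n=\infty$ from the inverse limit, to settle the point-set topology of the target, and finally to read off surjectivity from the explicit description. For finite $n$, $\dX_n$ carries the initial topology for the family $\{\sigma_i\colon\dX_n\to\hat P_i\cup W_i\}_{i=1}^{n}$, so it suffices to prove that each composite $\sigma_i\circ\xi_n$ is continuous. Using the coarse embeddings $X_n\subseteq X^i\subseteq\Xaug$ (legitimate for $i\le n$, since then $X_n$ already omits the horoball $\Horo(g_iP_{(i)})$ that is omitted from $X^i$), Proposition~\ref{prop:Dra-embedding} lets us regard $\nu X_n\subseteq\nu X^i$, and the heart of the matter is the identity $\sigma_i\circ\xi_n=\bar\zeta_i\circ hF_i|_{\nu X_n}$, which I would establish by distinguishing whether $y\in\nu X_n$ lies in $\alpha^{-1}(s_i)$ or not. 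If $y\notin\alpha^{-1}(s_i)$, then $hF_i(y)=\alpha(y)\in\hat P_i$ by Lemma~\ref{lem:F_and_alpha}, so $\bar\zeta_i(hF_i(y))=\alpha(y)$, while $\sigma_i(\xi_n(y))=\alpha(y)$ as well --- directly when $y$ lies in no $\alpha^{-1}(s_j)$, and because $\xi_n(y)$ lies in $W_j$ with $\sigma_i$-image $s_j=\alpha(y)$ when $y\in\alpha^{-1}(s_j)$ for some $j\le n$, $j\ne i$. If $y\in\alpha^{-1}(s_i)$, then by definition $\xi_n(y)=\bar\zeta_i(hF_i(y))$ and $\sigma_i$ acts as the identity on this point, so the two sides again agree. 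Granting this identity, $\xi_n$ is continuous because $hF_i\colon hX^i\to h\hat P_i$ is the continuous extension of $F_i$ supplied by Proposition~\ref{prop:Pullback} and $\bar\zeta_i\colon h\hat P_i\to\hat P_i\cup W_i$ is continuous by the construction of coarse compactifications in Section~\ref{sec:higs-comp}.

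For $n=\infty$ I would define $\xi_\infty\colon\nu X_\infty=\nu G\to\dX_\infty$ by the same formula; it is well defined since the $s_i$ are pairwise distinct and $hF_i$ carries $\alpha^{-1}(s_i)\cap\nu X^i$ into the Higson corona $\nu\hat P_i$. Writing $p_n\colon\dX_\infty\to\dX_n$ for the canonical projection --- the identity on $W_i$ for $i\le n$, the collapse $W_i\mapsto s_i$ for $i>n$, and the evident inclusion of $\partial\Xaug\setminus\{s_j:j\in\N\}$ --- the same case analysis gives $p_n\circ\xi_\infty=\xi_n|_{\nu G}$ for every $n$, whence continuity of $\xi_\infty$ follows from the universal property of $\dX_\infty=\varprojlim\dX_n$. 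As for compactness and metrizability: each $\hat P_i\cup W_i$ is compact metrizable by Proposition~\ref{prop:coarse_compt-is-Hausdorff}; the map $(\sigma_1,\dots,\sigma_n)$ is injective and identifies $\dX_n$ with a closed subspace of the compact metrizable product $\prod_{i=1}^n(\hat P_i\cup W_i)$ (the compatibility relations cutting out the image being closed conditions), so $\dX_n$ is compact metrizable, and hence so is the countable inverse limit $\dX_\infty$. Therefore $(\dX_n,\xi_n)$ is a corona of $X_n$ and $(\dX_\infty,\xi_\infty)$ is a corona of $G$, the latter using that $X_\infty=\Gamma$ is coarsely equivalent to $G$, so $\nu X_\infty=\nu G$.

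For the last assertion, if $\zeta_r$ is surjective for $r=1,\dots,k$ then $\zeta_i=\zeta_{(i)}\circ\nu g_i^{-1}$ is surjective for every $i$, and I would show $\xi_n$ is onto by hitting each stratum of $\dX_n=(\partial\Xaug\setminus\{s_1,\dots,s_n\})\sqcup\bigsqcup_{i=1}^nW_i$ separately. A conical boundary point is a limit of a sequence in $\Gamma\subseteq X_n$; any cluster point of that sequence in $hX_n$ lies in $\nu X_n$ and maps to the given point under $\xi_n=\alpha(-)$. A parabolic point $s_j$ with $j>n$ is the limit of the vertical ray of $\Horo(g_jP_{(j)})\subseteq X_n$ and is treated the same way. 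A point $w\in W_i$, $i\le n$, is hit because $\nu(g_iP_{(i)})$ embeds into $\nu\Gamma\subseteq\nu X_n$ and lands inside $\alpha^{-1}(s_i)$ (a parabolic coset converges in $\Xaug$ to its parabolic point), while $hF_i$ restricts on $\nu(g_iP_{(i)})$ to the homeomorphism onto $\nu\hat P_i$ coming from the coarse equivalence $H_i$ of Proposition~\ref{prop:coarse_equivalence}, so $\xi_n=\bar\zeta_i\circ hF_i=\zeta_i\circ\nu H_i$ maps this set onto $W_i$. The identical three observations, with $X_n$ replaced by $\Gamma$, show that $\xi_\infty$ is onto $\dX_\infty$.

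The main obstacle is the identity $\sigma_i\circ\xi_n=\bar\zeta_i\circ hF_i|_{\nu X_n}$: Lemma~\ref{lem:F_and_alpha} is precisely the ingredient that makes the two pieces of the definition of $\xi_n$ --- the map $\alpha$ away from the blown-up parabolic points and $\bar\zeta_i\circ hF_i$ over $s_i$ --- fit together into one continuous map, the apparent jump of $\xi_n$ along $\alpha^{-1}(s_i)$ being exactly what the collapsing map $\bar\zeta_i$ repairs; once this is in place, continuity is formal. The coarse-geometric facts used for surjectivity --- conical points of $\partial\Xaug$ are limits of sequences in $\Gamma$, parabolic cosets converge in $\Xaug$ to their parabolic points, and $\Horo(g_iP_{(i)})$ has limit set $\{s_i\}$ --- are standard features of the Bowditch boundary and the Groves--Manning augmented space, and I would simply cite them.
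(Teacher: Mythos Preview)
Your continuity argument is essentially the paper's, repackaged: the paper checks continuity of $\xi_n$ at points of $\alpha^{-1}(s_i)$ by a net argument, observing (via Lemma~\ref{lem:F_and_alpha}) that $\xi_n(x_\lambda)=\bar\zeta_i\circ hF_i(x_\lambda)$ whether or not $x_\lambda\in\alpha^{-1}(s_i)$; you make the same observation globally as the identity $\sigma_i\circ\xi_n=\bar\zeta_i\circ hF_i|_{\nu X_n}$ and then invoke the initial topology. Both rest on the same lemma and the same insight.

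Your surjectivity argument, however, takes a genuinely different route. The paper proceeds by induction on $n$: it first shows that the restriction $\xi_0\colon\nu G\to\partial X_0=\partial\Xaug$ is already surjective, citing minimality of the $G$-action on the Bowditch boundary, and then passes from $\xi_n$ to $\xi_{n+1}$ using the projection $\pi_n\colon\partial X_{n+1}\to\partial X_n$ (the only new points, those of $W_{n+1}$, are hit via $\nu(g_{n+1}P_{(n+1)})$, while everything else is hit because $\pi_n$ is injective off $W_{n+1}$). Your approach instead hits each stratum of $\partial X_n$ directly, invoking the conical/parabolic dichotomy for points of $\partial\Xaug$ and the same coset argument for $W_i$. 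Both are correct; the paper's induction is shorter and uses only the single dynamical fact of minimality, whereas your argument is more explicit about where preimages live and avoids the induction at the cost of citing slightly more structure of the Bowditch boundary. Your treatment of $n=\infty$ via the inverse limit, and your verification that $\partial X_n$ is compact metrizable, are more detailed than what the paper writes out but entirely in keeping with its setup.
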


\begin{proof}
It is enough to show that $\xi_{n}$ is continuous on 
$\nu X_n \cap \alpha^{-1}(s_i)$. 
We fix $x\in \nu X_n \cap \alpha^{-1}(s_i)$. Let $\{x_\lambda\}_{\lambda \in
 \Lambda}$ 
be a net in $\nu X_n$ such that 
$x_\lambda \to x$. If $x_\lambda \in \alpha^{-1}(s_i)$ then
 $\xi_{n}(x_\lambda) = \bar{\zeta}_i \circ hF_i(x_\lambda)$.
If $x_\lambda \notin \alpha^{-1}(s_i)$ then by
 Lemma~\ref{lem:F_and_alpha}, $\xi_{n}(x_\lambda) = \alpha(x_\lambda) =
 \bar{\zeta}_i \circ hF_i(x_\lambda)$. Here we remark that $\bar{\zeta}_i$ is the
 identity on $\hat{P_i}$. Since $\bar{\zeta_i}\circ hF_i$ is continuous,
we have $\xi(x_\lambda) \to \xi(x)$.

We suppose $\zeta_r$ is surjective for $r=1,\dots,k$. We show that
 $\xi_n$ is surjective for all $n\in \N$. In fact, we prove that the
 restriction $\xi_n\colon \nu G\to \partial X_n$ is  surjective. 
Since the action of $G$ on $\partial X_0 = \partial \Xaug$
 is minimal (\cite[Section 6]{MR2922380}), 
$\xi_0\colon \nu G \to \partial X_0$ is surjective. 
We assume that $\xi_n\colon \nu G \to \partial X_n$ is surjective. 
Let $\pi_n\colon \partial X_{n+1}\to \partial X_n$ be a natural
 projection. Then we have $\xi_{n}=\pi_n\circ \xi_{n+1}$. 
Let $x\in \partial X_{n+1}$. If $x\in \pi_n^{-1}(s_{n+1})=W_{n+1}$, 
then there exists $y$ in $\nu (g_{n+1}P_{(n+1)})$ such that 
$\xi_{n+1}(y)=x$, where we regard $\nu (g_{n+1}P_{(n+1)})$ as a subspace
 of $\nu G$. Otherwise, there exists $y'\in \nu G$ such that 
$\pi_n(x)=\xi_n(y')=\pi_n(\xi_{n+1}(y'))$. Then we have 
$\xi_{n+1}(y') = x$ since the restriction of $\pi_{n}$ to the complement of
 $\pi_n^{-1}(s_{n+1})$ is injective.

%
\end{proof}

\section{The transgression maps}
\label{sec:transgression-maps}
Let $M^*$ be the
$K$-theory $K^*$ or the Alexander-Spanier cohomology with compact
support $H_c^*$.
Let $G$ be a group which is hyperbolic relative to $\famP$ satisfying the
condition of Theorem~\ref{main_theorem}. Let $\{g_n\}_{n\in\N}$ be an order
of the cosets of $(G,\famP)$. Let $s_i$ be the parabolic point of $g_iP_{(i)}$.
Let $X_n$ and $EX_n$ be as defined in
Section~\ref{subsec:weak-coars-relat}. We can choose a map 
$\varphi_n \colon EX_n \to X_n$ such that the pullback coarse
structure is proper and the $\varphi_n$ is a coarse equivalence. (See
loc. cit.) 
Therefore we can regard a corona of $X_n$ as that  
of $EX_n$.

For a compact space $Z$, we denote by $\mathcal{C}Z$ 
a closed cone of $Z$,  that is, 
$\mathcal{C}Z = Z \times [0,1]/\sim$ where 
$(z,1)\sim (z',1)$ for all $z,z'$ in $Z$. 
Let $(W_i,\zeta_i)$ be a corona of $g_iP_{(i)}$ as in
Section~\ref{sec:blow-up-parabolic}.
Let $\dX_n = \partial  X_n(W_i;i=1,\dots,n)$ be the $n$-th blown-up 
of $\partial \Xaug$.
Let $S_n$ be a space obtained by pasting $\mathcal{C}W_{n+1}$ on $\dX_{n+1}$ 
along $W_{n+1}$.
\[
 S_n := \dX_{n+1} \cup \mathcal{C}W_{n+1}.
\]

\begin{lemma}
\label{lem:homotopy_cone}
The natural quotient map $S_n \to \dX_{n}$ which sends
 $\mathcal{C}W_{n+1}$ to the parabolic point $s_{n+1}$ induces an isomorphism 
 $M^*(\dX_{n}) \cong M^*(S_n)$.
\end{lemma}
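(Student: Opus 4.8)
The plan is to compare the long exact sequences of the closed pairs $(\dX_n,\{s_{n+1}\})$ and $(S_n,\mathcal{C}W_{n+1})$ and conclude by the five lemma. Write $q\colon S_n\to\dX_n$ for the natural quotient map; on the subspace $\dX_{n+1}\subset S_n$ it is the natural projection $\pi_n\colon\dX_{n+1}\to\dX_n$ that collapses $W_{n+1}$ to $s_{n+1}$, and on $\mathcal{C}W_{n+1}$ it is constant equal to $s_{n+1}$. First I would settle the point-set topology: since $S_n=\dX_{n+1}\cup\mathcal{C}W_{n+1}$ is obtained by gluing two compact metrizable spaces along the closed subspace $W_{n+1}$, it is again compact metrizable, $\dX_{n+1}$ embeds in it as a closed subspace, and $\mathcal{C}W_{n+1}$ is a closed contractible subspace with $q^{-1}(\{s_{n+1}\})=\mathcal{C}W_{n+1}$. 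Putting $U:=\dX_n\setminus\{s_{n+1}\}$ and $V:=S_n\setminus\mathcal{C}W_{n+1}=\dX_{n+1}\setminus W_{n+1}$, the restriction $q|_V\colon V\to U$ is the restriction of the quotient map $\pi_n$ to the saturated open complement of $W_{n+1}$, hence a homeomorphism; moreover $q$ is proper, being a continuous map of compact Hausdorff spaces.

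Then I would invoke the long exact sequence for a closed subspace that $M^*$ enjoys on LCSH, applied to $\{s_{n+1}\}\subset\dX_n$ and to $\mathcal{C}W_{n+1}\subset S_n$. Since $q$ is proper and $q^{-1}(\{s_{n+1}\})=\mathcal{C}W_{n+1}$, it induces a morphism of these two exact sequences, giving a commuting ladder
\[
\xymatrix{
M^p(U) \ar[r]\ar[d]^{\cong} & M^p(\dX_n) \ar[r]\ar[d]^{q^*} & M^p(\{s_{n+1}\}) \ar[r]^{\partial}\ar[d]^{\cong} & M^{p+1}(U) \ar[d]^{\cong}\\
M^p(V) \ar[r] & M^p(S_n) \ar[r] & M^p(\mathcal{C}W_{n+1}) \ar[r]^{\partial} & M^{p+1}(V),
}
\]
and likewise for the term $M^{p-1}(\{s_{n+1}\})\to M^p(U)$ one step further to the left. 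The vertical maps over $U$ are isomorphisms because $q|_V$ is a homeomorphism, and the vertical maps over the closed subspaces are isomorphisms because $\mathcal{C}W_{n+1}$ is contractible, so homotopy invariance gives $M^*(\mathcal{C}W_{n+1})\cong M^*(\mathrm{pt})=M^*(\{s_{n+1}\})$ with the isomorphism induced by $q$. The five lemma then forces $q^*\colon M^p(\dX_n)\to M^p(S_n)$ to be an isomorphism for every $p$, which is the assertion.

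The step I expect to be the main obstacle is the functoriality of the long exact sequence of a closed pair with respect to $q$ — i.e., that the restrictions of $q$ to the open complements and to the closed subspaces fit together into a commuting ladder of the two sequences. This uses that $q$ is proper (automatic here) and that $q^{-1}$ of the distinguished closed subspace is exactly the distinguished closed subspace of the source, so that the connecting maps and the wrong-way maps of the open inclusions are compatible; both $K^*$ and $H_c^*$ on LCSH have this naturality. The only other thing needing care is the topological claim that $S_n\setminus\mathcal{C}W_{n+1}$, with its subspace topology, is $\dX_{n+1}\setminus W_{n+1}$ and that $q$ restricts there to $\pi_n$, which is a formal consequence of describing $S_n$ as a pushout along the closed embedding $W_{n+1}\hookrightarrow\dX_{n+1}$.
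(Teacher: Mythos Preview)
Your argument is correct and is essentially the same as the paper's: the paper's one-line proof simply invokes the strong excision property, which is exactly the statement that a proper map of closed pairs inducing a homeomorphism on complements gives an isomorphism on relative cohomology --- your ladder-plus-five-lemma argument is the standard way to unpack that. The only difference is granularity: you spell out the point-set topology and the naturality of the long exact sequence, whereas the paper absorbs all of this into the phrase ``strong excision'' and a reference to Spanier.
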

\begin{proof}
 The lemma follows from the strong excision property. 
(See \cite[Chapter 6, Section 6]{MR666554} 
for the case of Alexander-Spanier cohomology.)
\end{proof}
We use the following notations.
\begin{align*}
E\Horo(g_iP_{(i)}) &:= g_iEP_{(i)} \times [0,\infty);\\
\overline{E\Horo(g_iP_{(i)})} &:= \mathcal{C}(g_i\EP_{(i)}\cup_{\zeta_i} W_i).
\end{align*}
Then $\overline{E\Horo(g_iP_{(i)})}$ is a compactification of 
$E\Horo(g_iP_{(i)})$ and 
$\overline{E\Horo(g_iP_{(i)})} \setminus E\Horo(g_iP_{(i)}) =
\mathcal{C}W_i$.
We remark that $\overline{E\Horo(g_iP_{(i)})}$ is not any coarse
compactification of $E\Horo(g_iP_{(i)})$. 


\begin{proposition}
\label{prop:boundary-map-for-Xn}
We suppose that the boundary map 
$\partial \colon \tilde{M}^{*-1}(W_i)\to M^*(\EP_i)$ is an isomorphism
 for $i=1,\dots,k$. Then 
 $\partial \colon \tilde{M}^{*-1}(\dX_n) \to M^*(EX_n)$ is an isomorphism
 for all $n\geq 1$.
\end{proposition}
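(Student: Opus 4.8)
The plan is to prove, by induction on $n\ge 0$ (where $\dX_0:=\partial\Xaug$ and $EX_0:=EX$), that $\partial\colon\tilde M^{*-1}(\dX_n)\to M^*(EX_n)$ is an isomorphism; the assertion for $n\ge 1$ is then a special case. I would begin with a reformulation. Writing $\overline{EX_n}:=EX_n\cup_{\xi_n}\dX_n$, the reduced long exact sequence of the pair $(\overline{EX_n},\dX_n)$ reads
\[
\cdots\to M^{*}(EX_n)\to\tilde M^{*}(\overline{EX_n})\to\tilde M^{*}(\dX_n)\xrightarrow{\ \partial\ }M^{*+1}(EX_n)\to\cdots,
\]
and a direct diagram chase shows that $\partial$ is an isomorphism in every degree if and only if $\tilde M^*(\overline{EX_n})=0$. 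Thus the proposition is equivalent to the vanishing $\tilde M^*(\overline{EX_n})=0$ for all $n$. The base case $n=0$ holds by Corollary~\ref{cor:transgression} together with Proposition~\ref{prop:weak-coarsening}: the transgression $T_{\partial\Xaug}\colon\tilde M^{*-1}(\partial\Xaug)\to MX^*(\Xaug)$ is an isomorphism because $\Xaug$ is Gromov hyperbolic, the character map identifies $MX^*(\Xaug)$ with $M^*(EX)$, and $c\circ T=\partial$.

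For the inductive step I would assume $\tilde M^*(\overline{EX_n})=0$ and compare $\overline{EX_n}$ with level $n+1$. Inside $\overline{EX_{n+1}}$ let $B$ be the closed subspace obtained by gluing the copy of $g_{n+1}\EP_{(n+1)}\subset\EG$ to $W_{n+1}\subset\dX_{n+1}$ along $\zeta_{n+1}$, that is, $B=g_{n+1}\EP_{(n+1)}\cup_{\zeta_{n+1}}W_{n+1}$ (the closure of $g_{n+1}\EP_{(n+1)}$ in $\overline{EX_{n+1}}$ when $\zeta_{n+1}$ is surjective), and paste the compactification $\overline{E\Horo(g_{n+1}P_{(n+1)})}=\mathcal{C}B$ onto $\overline{EX_{n+1}}$ along $B$, obtaining a compact space $Y_n$. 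Since $\overline{E\Horo(g_{n+1}P_{(n+1)})}\setminus E\Horo(g_{n+1}P_{(n+1)})=\mathcal{C}W_{n+1}$ and $EX_n=EX_{n+1}\cup E\Horo(g_{n+1}P_{(n+1)})$, the space $Y_n$ is a compactification of $EX_n$ with corona $S_n=\dX_{n+1}\cup\mathcal{C}W_{n+1}$, and collapsing $\mathcal{C}W_{n+1}$ to the parabolic point $s_{n+1}$ produces a quotient map $Y_n\to\overline{EX_n}$ that is the identity on $EX_n$ and restricts on coronae to the map $S_n\to\dX_n$ of Lemma~\ref{lem:homotopy_cone}. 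Comparing the long exact sequences of the pairs $(Y_n,S_n)$ and $(\overline{EX_n},\dX_n)$ and applying the five lemma (using Lemma~\ref{lem:homotopy_cone}) gives $\tilde M^*(\overline{EX_n})\cong\tilde M^*(Y_n)$, so $\tilde M^*(Y_n)=0$.

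Next I would run Mayer--Vietoris on the closed decomposition $Y_n=\overline{EX_{n+1}}\cup\mathcal{C}B$ with $\overline{EX_{n+1}}\cap\mathcal{C}B=B$, which is available for both choices of $M^*$ (for $K$-theory via the pullback square $C(Y_n)=C(\overline{EX_{n+1}})\times_{C(B)}C(\mathcal{C}B)$ and \cite[Theorem~21.2.2]{MR1656031}, for compactly supported Alexander--Spanier cohomology by ordinary excision). As $\mathcal{C}B$ is a cone, $\tilde M^*(\mathcal{C}B)=0$, so the sequence becomes $\cdots\to\tilde M^{*-1}(B)\to\tilde M^*(Y_n)\to\tilde M^*(\overline{EX_{n+1}})\to\tilde M^*(B)\to\cdots$. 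Finally, the reduced long exact sequence of the pair $(B,W_{n+1})$ with $B\setminus W_{n+1}=g_{n+1}\EP_{(n+1)}$ shows that $\tilde M^*(B)=0$ if and only if $\partial\colon\tilde M^{*-1}(W_{n+1})\to M^*(g_{n+1}\EP_{(n+1)})\cong M^*(\EP_{(n+1)})$ is an isomorphism, which is the hypothesis of the proposition (for $i=(n+1)$, transported by the isometry $g_{n+1}$). With $\tilde M^*(Y_n)=0$ and $\tilde M^*(B)=0$, exactness forces $\tilde M^*(\overline{EX_{n+1}})=0$, and the induction is complete.

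I expect the main obstacle to be the topological bookkeeping in the second step: identifying $B$ both as a closed subspace of $\overline{EX_{n+1}}$ and as the base of the cone $\overline{E\Horo(g_{n+1}P_{(n+1)})}$, verifying that pasting along $B$ yields a compactification of $EX_n$ whose corona is $S_n$, and checking that collapsing $\mathcal{C}W_{n+1}$ reproduces precisely the compactification $(\overline{EX_n},\dX_n)$, so that Lemma~\ref{lem:homotopy_cone} applies. These identifications rely on how the pulled-back coarse structure on $EX_n$ and the maps $\xi_n$, $\pi_n$ of Section~\ref{sec:bound-relat-hyperb} interact with the horoball decomposition; once they are in place, the homological algebra (the two pair sequences and the Mayer--Vietoris sequence) is routine.
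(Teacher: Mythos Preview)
Your approach is correct and uses the same ingredients as the paper (Lemma~\ref{lem:homotopy_cone}, the hypothesis on the $W_i$, and Mayer--Vietoris), but packages them differently. The paper never introduces the compactifications $\overline{EX_n}$ or the auxiliary space $Y_n$: it simply writes down a ladder connecting the Mayer--Vietoris sequence of $S_n=\dX_{n+1}\cup\mathcal{C}W_{n+1}$ to that of $EX_n=EX_{n+1}\cup E\Horo(g_{n+1}P_{(n+1)})$, with vertical arrows the boundary homomorphisms $\partial$, notes that $\tilde M^{q-1}(\mathcal{C}W_{n+1})=M^q(E\Horo(g_{n+1}P_{(n+1)}))=0$, and applies the five lemma directly. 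Your route, via the reformulation $\tilde M^*(\overline{EX_n})=0$ and the explicit construction of $Y_n$, is logically equivalent but trades the (implicit) commutativity check of the paper's ladder for the topological bookkeeping you rightly flag as the main obstacle. The paper's packaging is shorter and sidesteps the issue of whether $Y_n/\mathcal{C}W_{n+1}$ really recovers the coarse compactification $\overline{EX_n}$; yours makes the ambient spaces visible and is more self-contained once that identification is checked. The homological content is the same either way.
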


\begin{proof}
  Since $\Xaug$ is hyperbolic and $\partial \Xaug$ is its Gromov
 boundary, 
by Corollary~\ref{cor:transgression} and
 Lemma~\ref{lem:homotopy_cone}, the
 boundary map induces an isomorphism
 \[
  \tilde{M}^{*-1}(S_0) \cong M^*(\EX).
 \]
The proposition inductively follows 
from Lemma~\ref{lem:homotopy_cone} and Mayer-Vietoris sequences
 for $S_n = \dX_{n+1} \cup \mathcal{C}W_{n+1}$ and for 
 $EX_n = EX_{n+1}\cup E\Horo(g_{n+1}P_{(n+1)})$:
\begin{align*}
 \xymatrix{
\ar[r] & \tilde{M}^{q-1}(S_n) \ar[d] \ar[r] 
& \tilde{M}^{q-1}(\dX_{n+1}) \oplus \tilde{M}^{q-1}(\mathcal{C}W_{n+1}) \ar[d] \ar[r] 
& \tilde{M}^{q-1}(W_{n+1})  \ar[r] \ar[d]&\\
\ar[r] & M^q(EX_n) \ar[r] 
& M^q(EX_{n+1}) \oplus M^q(E\Horo(g_{n+1}P_{(n+1)})) \ar[r]
& M^q(g_{n+1}\EP_{(n+1)}) \ar[r] &.
}
\end{align*}
Here we remark that 
$\tilde{M}^{q-1}(\mathcal{C}W_{n+1}) = M^q(E\Horo(g_{n+1}P_{(n+1)})) = 0$.
\end{proof}

\subsection{Proof of Theorem~\ref{main_theorem}}
Let $M^*$ be the Alexander-Spanier cohomology with compact supports or
the $K$-theory. Let $(W_r,\zeta_r)$ be a corona of $P_r$ for
$r=1,\dots,k$. We remark that the boundary map 
$\partial \colon \tilde{M}^{*-1}(W_i) \to M^*(g_i\EP_{(i)})$ 
is an isomorphism if and only if so is the transgression map 
$T_{W_i}\colon \tilde{M}^{*-1}(W_i) \to MX^*(g_iP_{(i)})$. A similar
statement for $K$-homology holds. By the continuity of $M^*$, we have
$\tilde{M}^{*-1}(\dX_\infty)\cong \varinjlim \tilde{M}^{*-1}(\dX_n)$.
Therefore, if $T_{W_r} \colon \tilde{M}^{*-1}(W_r) \to MX^*(P_r)$ is
an isomorphism for all $r=1,\dots,k$, then by
Proposition~\ref{prop:boundary-map-for-Xn} and
Corollary~\ref{cor:limX_n=G}, 
we have $\tilde{M}^{*-1}(\dX_\infty)\cong MX^*(G)$. 

If $T_{W_r} \colon KX_*(P_r) \to \tilde{K}_{*-1}(W_r)$ is an isomorphism for all
$r=1,\dots,k$, then, by the same way as in the proof of
Proposition~\ref{prop:boundary-map-for-Xn}, we can show that 
$K_*(EX_n)\cong \tilde{K}_{*-1}(\dX_n)$ for all $n\in \N$.
By the Milnor exact sequence for 
$K_*(EX_n)$ and $K_{*-1}(\dX_n)$, we have 
$KX_*(G) \cong \tilde{K}_{*-1}(\dX_\infty).$ 

\section{Application}\label{sec:application}
We give two applications of Theorems~\ref{th:coarse_co-assembly-map} and \ref{main_theorem}. 
First, we consider virtually polycyclic groups. We recall the following fact
\cite[Proposition 4.4]{MR2379803}. 
\begin{theorem}[Ji]\label{nilnil}
Any virtually polycyclic group $P$ has a finite $P$-simplicial complex $\underline{E}P$
which is a universal space for proper $P$-actions. 
\end{theorem}

It follows from \cite[Theorem 1.1]{MR1728880}, \cite[Theorem 9.2]{MR2225040}, 
Theorem~\ref{nilnil}
and the fact that any virtually polycyclic group has Yu's Property A that the coarse
assembly map and the coarse co-assembly map for the group are isomorphisms.
\begin{proposition}\label{solv}
Let $P$ be a virtually polycyclic group.
Then there exists a corona $W$ of $P$ such that 
$W$ is homeomorphic to a sphere $S^{n-1}$ 
and satisfies the following: 
\begin{align*}
K_*(C^*(P))\cong KX_*(P)\cong \tilde{K}_{*-1}(W)\cong 
\left\{
\begin{array}{cc}
\mathbb Z   & (*=n)   \\
0  & (*= n+1)   \\
\end{array}
\right.,\\
K_{*-1}(\mathfrak c^r(P))\cong KX^*(P)\cong \tilde{K}^{*-1}(W)\cong \left\{
\begin{array}{cc}
\mathbb Z   & (*=n)   \\
0  & (*=n+1  )   \\
\end{array}
\right.,\\
HX^*(P)\cong \tilde{H}^{*-1}(W)\cong \left\{
\begin{array}{cc}
\mathbb Z   & (*=n)   \\
0  & (*\neq n)   \\
\end{array}
\right. .
\end{align*}
\end{proposition}
\begin{proof}
Any virtually polycyclic group has a finite index subgroup 
which is isomorphic to a polycyclic group by definition. 
It follows from \cite[Theorem 4.28]{MR0507234} that 
any polycyclic group has a finite index normal subgroup which is isomorphic to a lattice of 
some $n$-dimensional simply connected solvable Lie group. 
Hence the given virtually polycyclic group is naturally coarsely 
equivalent to a lattice of some $n$-dimensional simply connected solvable Lie group. 

Now we can assume that a given group $P$ is 
a lattice of some $n$-dimensional simply connected solvable Lie group $H$ 
without loss of generality.
Then it follows from the Mayer-Vietoris argument in \cite[Section 7]{product-cBC} that 
the coarse assembly map and the coarse co-assembly map for the group are isomorphisms.


By \cite[Section 7]{product-cBC}, $H$ has a coarse compactification 
$H\cup W$ which is homeomorphic to the closed ball in $n$-dimensional euclidean space. 
Moreover $W$ is homeomorphic to $S^{n-1}$. 

Since $H$ is uniformly contractible and has bounded geometry, the
coarsening map and the character maps 
\[
K_*(H)\to KX_*(H),\, KX^*(H)\to K^*(H),\, HX^*(H)\to H^*_c(H)
\]
are isomorphisms. (See \cite[Section 3]{MR1388312},
\cite[Theorem 4.8]{MR2225040}, \cite[(3.33) Proposition]{MR1147350}). 
Also since $\overline{H}$ is contractible, we have 
\[
K_*(H)\cong \tilde{K}_{*-1}(W), \;
\tilde{K}^{*-1}(W)\cong K^*(H), \;
\tilde{H}^{*-1}(W)\cong H^*_c(H).
\]
Hence we have 
\[
 KX_*(H)\cong \tilde{K}_{*-1}(W),\;
 \tilde{K}^{*-1}(W)\cong KX^*(H),\;
 \tilde{H}^{*-1}(W)\cong HX^*(H).
\]

Since the inclusion from $P$ to $H$ is a coarse equivalence map, 
$W$ is regarded as a corona of $P$ and thus the map 
covers the identity on $W$, we have the assertion. 
\end{proof}

\subsection{Coronae of the fundamental groups of  pinched negatively curved complete Riemannian manifolds with finite volume}

\begin{corollary}\label{pinched}
Let $G$ be a group which properly isometrically acts on 
an $m$-dimensional simply-connected pinched negatively curved complete 
Riemannian manifold $X$.
Suppose that the quotient is with finite volume, but not compact.  
Then we have a corona $\partial G$ of $G$ and the following: 
\begin{align*}
K_*(C^*(G))\cong KX_*(G)\cong \tilde{K}_{*-1}(\partial G)\cong 
\left\{
\begin{array}{cc}
\prod_{i\in\mathbb N} \mathbb Z   & (*=m-1)   \\
0  & (*= m)   \\
\end{array}
\right.,\\
K_{*-1}(\mathfrak c^r(G))\cong KX^*(G)\cong \tilde{K}^{*-1}(\partial G)\cong \left\{
\begin{array}{cc}
\bigoplus_{i\in\mathbb N} \mathbb Z   & (*=m-1)   \\
0  & (*= m )   \\
\end{array}
\right.,\\
HX^*(G)\cong \tilde{H}^{*-1}(\partial G)\cong \left\{
\begin{array}{cc}
\bigoplus_{i\in\mathbb N} \mathbb Z   & (*=m-1)   \\
0  & (*\neq m-1)   \\
\end{array}
\right. .
\end{align*}
\end{corollary}

\begin{proof}
It is already known that 
the coarse assembly map and the coarse co-assembly map for
$G$ in the above are isomorphisms. Indeed
$G$ is known to be hyperbolic relative to a family of virtually
nilpotent subgroups (\cite[8.6]{asym_invs} and also \cite[Theorem 5.1]{Far98}) 
and thus we can use
\cite[Theorem 1.1]{MR1728880}, \cite[Theorem 9.2]{MR2225040} 
and \cite[Section 1]{MR2364071}.
This fact also follows from Proposition~\ref{solv}, Theorem \ref{nilnil} 
and Theorem~\ref{th:coarse_co-assembly-map}.
Note that nilpotent groups are polycyclic groups.

We take a set $\mathbb P$ of representatives of conjugacy invariant
classes of maximal parabolic subgroups of $G$ with respect to the action on $X$.  
Then $\mathbb P$ is a finite family of virtually nilpotent groups, 
and $G$ is hyperbolic relative to $\mathbb P$ (\cite[8.6]{asym_invs} and also \cite[Theorem 5.1]{Far98}).  
Then we have that 
$\mathbb P$ satisfies the assumptions in
Theorem~\ref{main_theorem} by
Proposition \ref{solv} and Theorem \ref{nilnil}.
Indeed we take a corona $W_r$ of $P_r$ in Proposition \ref{solv}, 
which is homeomorphic to $S^{m-2}$ and satisfies 
\begin{align*}
KX_*(P_r)\cong \tilde{K}_{*-1}(W_r), 
KX^*(P_r)\cong \tilde{K}^{*-1}(W_r),
HX^*(P_r)\cong \tilde{H}^{*-1}(W_r).
\end{align*}
We define $\partial G$ as the
blown-up boundary of $(G, \mathbb P,\{W_r\})$.
Then Theorem~\ref{main_theorem}
implies the assertion except for concrete computations.

Now we compute $\widetilde{K}_{*}(\partial G)$. From now on, we refer to 
Section~\ref{sec:blow-up-parabolic} for symbols as $\partial X_n$ and so on.
Note that $\partial G$ is $\partial X_\infty=\underleftarrow{\lim}\partial X_n$
as in Proof of Theorem~\ref{main_theorem}. 
In order to use the Milnor exact sequence, 
we compute the map $\widetilde{K}_m(\partial X_{n+1}) \to \widetilde{K}_m(\partial X_{n})$
for any $n\in \mathbb N$. 

Note that the Gromov boundary $\partial_G X$ of $X$ is the Bowditch 
boundary of $(G,\mathbb P)$ and homeomorphic to a sphere $S^{m-1}$.
Take a finite generating set $\mathcal{S}$ of $G$.
Then we have a $G$-equivariant homeomorphism 
$\partial X(G,\mathbb P,\mathcal{S})\cong \partial_G X$
by uniqueness of the Bowditch boundary of a relatively hyperbolic group
(see \cite[Section 9]{MR2922380}). 
We note that $\mathbb P$ is not empty
because the action of $G$ on $X$ is not cocompact. 

We consider the following long exact sequence 
for the excision pair $(\partial X_n, W_n)$ for any $n\in \mathbb N$:
\begin{align}\label{ddd}
\to \widetilde{K}_*(W_n) \to \widetilde{K}_*(\partial X_n) 
\to K_*(\partial X_n\setminus W_n) \to \widetilde{K}_{*-1}(W_n) \to ,
\end{align}
where we put $\partial X_0:=\partial X(G,\mathbb P,\mathcal{S})\cong \partial_G X\cong S^{m-1}$. 
Note that $\partial X_n\setminus W_n$ is naturally homeomorphic to 
$\partial X_{n-1}\setminus \{s_n\}$ and also that 
$K_*(\partial X_n\setminus W_n)$ is naturally isomorphic to $\widetilde{K}_{*}(\partial X_{n-1})$.
For $n=1$, the boundary map of the long exact sequence (\ref{ddd}) is the composite of 
the coarsening map $K_*(\partial X_0\setminus \{s_1\})\to KX_*(\partial X_0\setminus \{s_1\})$ and 
the transgression map $KX_*(\partial X_0\setminus \{s_1\})\to \widetilde{K}_{*-1}(W_1)$, 
where $\partial X_0\setminus \{s_1\}$ is coarsely equivalent to $g_1P_{(1)}$. 
The latter map is an isomorphism because the transgression map 
$KX_*(g_1P_{(1)})\to \widetilde{K}_{*-1}(W_1)$ is an isomorphism. 
Also the former map is an isomorphism because $\partial X_0\setminus \{s_1\}$
is uniformly contractible and with bounded geometry.  
Hence the boundary map is an isomorphism for $n=1$ 
and thus we have $\widetilde{K}_{*}(\partial X_{1})=0$. 
Then by using the long exact sequence (\ref{ddd}) inductively, for any $n\ge 2$, 
we have a split exact sequence: 
\begin{align*}
0 \to \widetilde{K}_m(W_{n+1})\cong \mathbb Z \to 
\widetilde{K}_m(\partial X_{n+1}) 
\to \widetilde{K}_m(\partial X_n) \cong \prod_1^{n-1}\mathbb Z \to 0 
\end{align*}
and $\widetilde{K}_{m-1}(\partial X_n)=0$. 
Now we can compute the reduced $K$-homology of $\partial G$ by the Milnor exact sequence. 
By a similar way, we can compute the reduced $K$-theory and reduced
cohomology of $\partial G$.
\end{proof}

\subsection{Coronae of the fundamental groups of $3$-dimensional closed manifolds}
We give coronae of the fundamental groups of $3$-dimensional closed manifolds.

\begin{corollary}
Let $G$ be the fundamental group of a $3$-dimensional closed manifold $M$.
Suppose that $G$ is infinite. 
Then we have a corona $\partial G$ of $G$ and the following: 
\begin{align*}
K_*(C^*(G))\cong KX_*(G)\cong \tilde{K}_{*-1}(\partial G),\\
K_{*-1}(\mathfrak c^r(G))\cong KX^*(G)\cong \tilde{K}^{*-1}(\partial G),\\
HX^*(G)\cong \tilde{H}^{*-1}(\partial G).
\end{align*}
\end{corollary}
\begin{proof}
The coarse Baum-Connes conjecture for $G$ is well-known.
For example, each group can be coarsely embeddable to a Hilbert space
and thus satisfies the conjecture by Yu's result. 
The below contains another proof.

If $M$ is not orientable, then the fundamental group of the double
covering of $M$ is contained in that of 
$M$ with index $2$ and thus those 
two groups are coarsely equivalent. 
We can assume that $M$ is orientable without loss of generality.

We take a prime decomposition $M=N_1\#N_2\#\cdots\#N_n$ 
and put $P_j:=\pi_1(N_j)$ for each $j\in \{1,2,\ldots,n\}$. 
Then $G$ is regarded as a free product $P_1\ast P_2\ast \cdots\ast P_n$. 
We remark that $N_j$ is orientable for each $j\in \{1,2,\ldots,n\}$
and that $N_j$ is not irreducible only if $N_j$ is diffeomorphic to 
$S^1\times S^2$ and 
thus $P_j$ is isomorphic to $\Z$. 
Without loss of generality, we can assume that 
there exists $0\le m\le n$ such that 
$P_j$ is infinite and not cyclic for each $j\le m$
and otherwise $P_j$ is finite or cyclic. 
If $m=0$, then $G$ is hyperbolic and the assertion follows from Higson-Roe's result \cite{MR1388312}. 
We can assume that $m\ge 1$. 
Then $G$ is hyperbolic relative to $\{P_1,\ldots, P_m\}$.

Now we take $j\in \{1,2,\ldots,m\}$. 
If $N_j$ is geometric, then the universal cover 
$\widetilde{N_j}$ is a universal space for free proper 
$P_j$-actions and coarsely equivalent to $P_j$. 
Then $\widetilde{N_j}$ is isometric to either of model spaces 
of $6$-geometry except for $S^3$ and $S^1\times S^2$ by choice of $j$.
Each of them has a coarse compactification $\widetilde{N_j}\cup W_j$
which is homeomorphic to a closed ball in $3$-dimensional euclidean space and then 
the corona $W_j$ is homeomorphic to a $2$-dimensional sphere. 
Indeed $Nil$ and $Sol$ are simply connected solvable Lie groups 
with a lattice and thus have such coarse compactifications (\cite[Section 7]{product-cBC}). 
When we consider $\R^3$, $\mathbb H^2\times \R$ and $\mathbb H^3$, 
they are Hadamard manifolds and thus the visual boundaries give such coarse compactifications 
(\cite{MR1388312}, \cite{WillettThesis}, \cite{Busemann_cBC}). 
Also since we have a homeomorphic coarse equivalence from 
$\widetilde{PSL}(2,\R)$ to $\mathbb H^3$
(see for example \cite[Section 2]{Kap-Leeb-3mdf-gr}), 
the visual boundary of $\mathbb H^3$ induces a desired coarse 
compactification of $\widetilde{PSL}(2,\R)$.
Then $P_j$ and $W_j$ satisfy assumptions in Theorems~\ref{th:coarse_co-assembly-map} and 
\ref{main_theorem}. 

If $N_j$ is not geometric, 
it follows from Thusrton's geometrization conjecture which was 
solved by Perelman that $N_j$ is a Haken manifold. 
Suppose that $N_j$ is Haken and not geometric.
Fix a metric on $N_j$.  
By Kapovich-Leeb's result \cite{Kap-Leeb-3mdf-gr}, even if $N_j$ 
is not non-positively curved and moreover has no metric with 
non-positive curvature, 
there exists a closed $3$-dimensional non-positively curved manifold $L_j$ 
and a bilipschits homeomorphism between the universal covers 
$\widetilde{N_j}$ and $\widetilde{L_j}$. 
In particular $P_j$ and $\widetilde{L_j}$ are coarsely equivalent. 
Since $\widetilde{L_j}$ is an Hadamard manifold, $\widetilde{L_j}$ and 
(thus $P_j$) has a coarse compactification which is homeomorphic to a 
closed ball in $3$-dimensional euclidean space.
The corona $W_j$ is homeomorphic to a $2$-dimensional sphere. 
Then $P_j$ and $W_j$ have assumptions in Theorems~\ref{th:coarse_co-assembly-map} and 
\ref{main_theorem}.

By Theorem~\ref{th:coarse_co-assembly-map} 
the coarse assembly map and the coarse co-assembly map for $G$ are isomorphisms.
Moreover by Theorem~\ref{main_theorem}, 
we have a desired corona $W$ of $G$.  
\end{proof}

\begin{corollary}\label{3-mfd}
Let $G$ be the fundamental group of a $3$-dimensional orientable closed manifold $M$.
Take a prime decomposition $M_1=N_1\#N_2\#\cdots\#N_m$. 
Suppose that $m$ is at least $2$, all fundamental groups of $N_j$ are infinite
and all $N_j$ are irreducible. 
Then we have a corona $\partial G$ of $G$ and the following: 
\begin{align*}
K_*(C^*(G))\cong KX_*(G)\cong \tilde{K}_{*-1}(\partial G)\cong 
\left\{
\begin{array}{cc}
\prod_{i\in\mathbb N} \mathbb Z   & (*=1)   \\
0  & (*= 0)   \\
\end{array}
\right.,\\
K_{*-1}(\mathfrak c^r(G))\cong KX^*(G)\cong \tilde{K}^{*-1}(\partial G)\cong \left\{
\begin{array}{cc}
\bigoplus_{i\in\mathbb N} \mathbb Z   & (*=1)   \\
0  & (*= 0 )   \\
\end{array}
\right.,\\
HX^*(G)\cong \tilde{H}^{*-1}(\partial G)\cong \left\{
\begin{array}{cc}
\bigoplus_{i\in\mathbb N} \mathbb Z   & (*=1)   \\
0  & (*\neq 1)   \\
\end{array}
\right. .
\end{align*}
\end{corollary}
\begin{proof}
Since $G$ is isomorphic to $P_1*\cdots *P_m$, 
the group $G$ is hyperbolic relative to $P_1,\ldots,P_m$ and 
the Bowditch boundary is homeomorphic to the Cantor set. 
Also we take a corona of $P_j$ in the above proof, 
which is homeomorphic to $S^2$.
Then we can compute the reduced $K$-homology, the reduced $K$-theory 
and reduced cohomology of a blown-up corona $\partial G$ of $G$ 
by a similar way as Proof of Corollary \ref{pinched}.
\end{proof}

\appendix
\section{Milnor exact sequences by Phillips}
\label{appendix:Milnor}
The $K$-theory for $C^\ast$-algebras can be extended
for countable projective limits of $C^\ast$-algebras
that are called $\sigma$-$C^\ast$-algebras.
Phillips \cite{MR1050490} studied such an extended theory
that he called the representable $K$-theory.
The theory possesses basic properties of the ordinary $K$-theory.
Indeed the theory consists of functors $RK_i$
from the category of $\sigma$-$C^\ast$-algebras to the category of
abelian groups,
which are homotopy invariant,
are stable under the tensor product with the $C^*$-algebra of compact
operators on a separable Hilbert space,
have a long exact sequence for a short exact sequence,
satisfy the Bott periodicity
and have a Milnor exact sequence for a countable projective limit.
See \cite{MR1050490} for details.

In this appendix, we state the Milnor exact sequence by Phillips
and give a proof for reader's convenience.
He stated the following (in fact an equivariant version of the following)
in {\cite[Theorem 5.8 (5)]{MR1050490}}.



\begin{proposition}\label{Phillips}
Let $\{\pi_k:A_{k+1}\to A_k\}_{k\in\N}$ be 
a projective system of $\sigma$-$C^\ast$-algebras. 
Then we have the following functorial exact sequence for each $p\in \Z$.
\[
0\to \limone RK_{p+1}(A_k) 
\to RK_p(\varprojlim A_k)
\to  \varprojlim RK_p(A_k) 
\to  0.
\]
\end{proposition}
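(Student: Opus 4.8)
The plan is to realize $\varprojlim A_k$, up to $RK$-isomorphism, as a mapping telescope and then to read the asserted sequence off the six-term exact sequence of a single extension of $\sigma$-$C^\ast$-algebras, exactly as in the classical derivation of Milnor's $\limone$-sequence. Put $P:=\prod_{k\in\N}A_k$, the product $\sigma$-$C^\ast$-algebra (the projective limit of the finite partial products), and let $\tau\colon P\to P$ be the $\ast$-homomorphism $\tau((a_k)_k)=(\pi_k(a_{k+1}))_k$; then $\varprojlim A_k$ is precisely the equalizer $\{a\in P:\tau(a)=a\}$. Define the mapping telescope
\[
\mathcal T:=\bigl\{\,h\in C([0,1],P)\ \bigm|\ h(1)=\tau(h(0))\,\bigr\},
\]
a closed $\ast$-subalgebra of the $\sigma$-$C^\ast$-algebra $C([0,1],P)$, and let $\iota\colon\varprojlim A_k\hookrightarrow\mathcal T$ be the inclusion of the constant paths.

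I would use two facts from Phillips \cite{MR1050490}: $RK_*$ is homotopy-invariant, $\mathbb Z/2$-graded and Bott periodic, with $RK_*(SB)\cong RK_{*+1}(B)$ and a natural six-term exact sequence for extensions of $\sigma$-$C^\ast$-algebras that arise as projective limits of extensions of $C^\ast$-algebras; and $RK_*(P)\cong\prod_k RK_*(A_k)$, which is immediate from Phillips' description of $RK_*$ as homotopy classes of $\ast$-homomorphisms into the stabilized algebra, since a $\ast$-homomorphism into a product is a product of $\ast$-homomorphisms. Evaluation at $0$ gives a surjection $\mathrm{ev}_0\colon\mathcal T\to P$ (any $a\in P$ extends to the straight-line path from $a$ to $\tau(a)$) with kernel $\{h:h(0)=h(1)=0\}=SP$, hence an extension
\[
0\longrightarrow SP\longrightarrow\mathcal T\xrightarrow{\ \mathrm{ev}_0\ }P\longrightarrow 0
\]
which is visibly a projective limit of $C^\ast$-extensions of the truncated telescopes. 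Plugging this into the six-term sequence and using $RK_*(SP)\cong RK_{*+1}(P)$ produces an exact sequence linking $RK_*(\mathcal T)$ with $\prod_kRK_*(A_k)$ and $\prod_kRK_{*+1}(A_k)$ through the connecting homomorphisms.

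The next step identifies the connecting map. The evaluations $\mathrm{ev}_t\colon\mathcal T\to P$, $t\in[0,1]$, form a homotopy of $\ast$-homomorphisms from $\mathrm{ev}_0$ to $\mathrm{ev}_1$, and by construction $\mathrm{ev}_1=\tau\circ\mathrm{ev}_0$; thus $(\mathrm{ev}_0)_*=\tau_*\circ(\mathrm{ev}_0)_*$ on $RK_*(\mathcal T)$, and the standard path-space computation then shows that the connecting homomorphism, after the identification $RK_*(SP)\cong RK_{*+1}(P)$, is $\mathrm{id}-\tau_*$. Under $RK_*(P)\cong\prod_kRK_*(A_k)$ the map $\tau_*$ becomes $(\xi_k)_k\mapsto((\pi_k)_*\xi_{k+1})_k$, so $\mathrm{id}-\tau_*$ is exactly the defining map of $\varprojlim$ and $\limone$: its kernel is $\varprojlim RK_*(A_k)$ and its cokernel is $\limone RK_*(A_k)$. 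Splitting the long exact sequence at $RK_*(\mathcal T)$ gives
\[
0\longrightarrow\limone RK_{*+1}(A_k)\longrightarrow RK_*(\mathcal T)\longrightarrow\varprojlim RK_*(A_k)\longrightarrow 0 ,
\]
and functoriality of this sequence in the tower $\{\pi_k\}$ is immediate, since $P$, $\tau$, $\mathcal T$, $\mathrm{ev}_0$ and $\iota$ are all functorial.

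The remaining point, which I expect to be the main obstacle, is to show that $\iota_*\colon RK_*(\varprojlim A_k)\to RK_*(\mathcal T)$ is an isomorphism, so that the displayed sequence is the one asserted for $RK_*(\varprojlim A_k)$. The underlying geometric fact is that a mapping telescope is homotopy equivalent to the (co)limit of its tower; in the present algebraic form one must build a homotopy of $\ast$-endomorphisms of $\mathcal T$ that fixes the constant paths $\iota(\varprojlim A_k)$ and progressively "compresses" an arbitrary path towards the shift direction, so that in the limit it takes values in $\iota(\varprojlim A_k)$, and then invoke homotopy invariance of $RK_*$ to obtain the inverse of $\iota_*$. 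I would also verify the two background inputs above ($RK_*(\prod_kA_k)\cong\prod_kRK_*(A_k)$ and the admissibility of the extension, so that Phillips' six-term sequence applies), since these are where the $\sigma$-$C^\ast$-algebra technicalities are concentrated.
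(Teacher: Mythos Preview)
Your approach is correct and runs closely parallel to the paper's; the telescope you write down is (after an obvious reparametrisation $F_k(t)=h_k(t-(k-1))$) exactly the paper's $T$. The only genuine difference is organisational: you derive the long exact sequence from the single extension $0\to SP\to\mathcal T\xrightarrow{\mathrm{ev}_0}P\to 0$ and its six-term sequence, identifying the boundary map as $\mathrm{id}-\tau_*$, whereas the paper splits the telescope into its even- and odd-indexed pieces to obtain a pullback square
\[
\xymatrix{T\ar[r]\ar[d]&\prod_m B_{2m}\ar[d]\\ A_1\oplus\prod_m B_{2m+1}\ar[r]&\prod_k A_k}
\]
and reads off Mayer--Vietoris. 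Both packages produce the same map $\mathrm{id}-\text{shift}$ on $\prod_kRK_*(A_k)$, whose kernel and cokernel are $\varprojlim$ and $\limone$. Your version is a little more streamlined; the paper's has the virtue that surjectivity of the maps in the pullback is manifest, so their Lemma~\ref{pullback} applies without further comment, while you must check separately that the extension is admissible for Phillips' six-term sequence.

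On the point you correctly isolate as the crux, namely that $\iota_*\colon RK_*(\varprojlim A_k)\to RK_*(\mathcal T)$ is an isomorphism, the paper does not argue by an infinite ``compression towards the shift direction'' as you sketch; instead it simply writes down a $*$-homomorphism $\pi\colon T\to\varprojlim A_k$ by evaluating each $F_k$ at an endpoint, checks $\pi\circ\iota=\mathrm{id}$, and contracts each path linearly to a constant to obtain $\iota\circ\pi\simeq\mathrm{id}$. In other words, the intended argument is that $\varprojlim A_k$ is a deformation retract of the telescope as $\sigma$-$C^*$-algebras, which is rather more direct than the limiting procedure you have in mind. When you fill in this step you should pay close attention to the compatibility of the contraction with the gluing condition $F_k(k)=\pi_k(F_{k+1}(k))$; getting the endpoint conventions to match so that $\pi$ genuinely lands in $\varprojlim A_k$ and the straight-line homotopy stays inside $T$ is where the care is needed.
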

Phillips gives a proof 
under the condition that every $\pi_k$ is surjective 
{\cite[Theorem 3.2]{MR1050490}}. 
In order to prove, we refer to it and to \cite{MR1388297}.
\begin{proof}
We define 
\begin{align*}
T&:=\{(F_k)\in \prod_{k\in \N}C([k-1,k], A_k)
\left| F_k(k)=\pi_k(F_{k+1}(k)) \text{ for any }k\in \N\right.\},\\
B_{k+1}&:=\{(F_k,a_{k+1})\in C([k-1,k],A_k)\oplus A_{k+1}\left|
 F_k(k)=\pi_k(a_{k+1})\right.\},\\
g_1&:T\ni (F_k) \mapsto (F_{2m-1}, F_{2m}(2m-1)) \in \prod_{m\in \N}B_{2m},\\
g_2&:T\ni (F_k) \mapsto (F_1(0), (F_{2m}, F_{2m+1}(2m))) \in
 A_1\oplus\prod_{m\in\N}B_{2m+1},\\
f_1&:\prod_{m\in \N}B_{2m}\ni (F_1,a_2,F_3,a_4,\ldots)
\mapsto (F_1(0),a_2,F_3(2),a_4,\ldots)\in \prod_{k\in\N} A_k,\\
f_2&:A_1\oplus\prod_{m\in\N}B_{2m+1}\ni (a_1, F_2, a_3, F_4,\ldots)
\mapsto (a_1,F_2(1),a_3,F_3(2),\ldots) \in \prod_{k\in\N} A_k,\\
\iota&: \varprojlim A_k\ni (a_k)\mapsto ([k-1,k]\ni t\mapsto a_k)\in T,\\
\pi&: T\ni (F_k)\mapsto (F_k(k))\in \varprojlim A_k.
\end{align*}
We have a pullback diagram
\[
\xymatrix{
&T \ar[r]^{g_1} \ar[d]^{g_2}&\prod_{m\in \N}B_{2m} \ar[d]^{f_1} &\\
&A_1\oplus\prod_{m\in \N}B_{2m+1} \ar[r]^{f_2}&\prod_{k\in\N} A_k.& 
}\]
Hence we have a Mayer-Vietoris sequence. 
Since $\pi\circ \iota=id$ and also $\iota\circ \pi$ and $id$ are homotopic, 
$\iota$ gives a homotopy equivalence between the above
 pullback diagram and the following commutative diagram 
\[
\xymatrix{
&\varprojlim A_k \ar[r] \ar[d]&\prod_{m\in \N}A_{2m} \ar[d]&\\
&\prod_{m\in \N}A_{2m-1} \ar[r]&\prod_{k\in\N} A_k.& 
}\]
Now we have the desired functorial Milnor exact sequence. 
\end{proof}

\bibliographystyle{amsplain}
\bibliography{/Users/tomo/Library/tex/math}

\bigskip
\address{ Tomohiro Fukaya \endgraf
Mathematical institute, Tohoku University, Sendai 980-8578, Japan}

\textit{E-mail address}: \texttt{tomo@math.tohoku.ac.jp}

\address{ Shin-ichi Oguni\endgraf
Department of Mathematics, Faculty of Science,
Ehime University,
2-5 Bunkyo-cho,
Matsuyama,
Ehime,
790-8577 Japan
}

\textit{E-mail address}: \texttt{oguni@math.sci.ehime-u.ac.jp}

\end{document}